\begin{document}
\begin{frontmatter}
\title{New Risk Bounds for 2D Total Variation Denoising}        
\runtitle{New Risk Bounds for 2D Total Variation Denoising}  
\runtitle{New Risk Bounds for 2D Total Variation Denoising\;\;\;\;\;\;\;}

\begin{aug}
\author{\fnms{Sabyasachi} \snm{Chatterjee}}
\and
\author{\fnms{Subhajit} \snm{Goswami}}
\blfootnote{Author names are sorted alphabetically.}

\runauthor{Chatterjee, S. and Goswami S.}

\affiliation{University of Illinois at Urbana-Champaign and Tata Institute of Fundamental Research }

\address{117 Illini Hall\\
Champaign, IL 61820 \\
Email:\ \lowercase{\href{mailto:sc1706@illinois.edu}}{sc1706@illinois.edu}\\
\phantom{E-mail:\ sc1706@illinois.edu}}

\address{1, Homi Bhabha Road\\
Colaba, Mumbai 400005, India \\
Email:\ \lowercase{\href{mailto:goswami@math.tifr.res.in}}{goswami@math.tifr.res.in}\\
\phantom{E-mail:\ goswami@ihes.fr}}


\end{aug}

\begin{abstract}
2D Total Variation Denoising (TVD) is a widely used technique for image denoising. It is also an important nonparametric regression method for estimating functions with heterogenous smoothness. Recent results have shown the TVD estimator to be nearly minimax rate optimal for the class of functions with bounded variation. In this paper, we complement these worst case guarantees by investigating the adaptivity of the TVD estimator to functions which are piecewise constant on axis aligned rectangles. We rigorously show that, when the truth is piecewise constant with few pieces, the ideally tuned TVD estimator performs better than in the worst case. We also study the issue of choosing the tuning parameter. In particular, we propose a fully data driven version of the TVD estimator which enjoys similar worst case risk guarantees as the ideally tuned TVD estimator. 
\end{abstract}

\begin{keyword}
Nonparametric regression, Total variation denoising, Tuning free estimation, Estimation of piecewise constant functions, Tangent cone, Gaussian Width, Recursive partitioning
\end{keyword}
\end{frontmatter}

\section{Introduction}\label{sec:intro}
Total variation denoising (TVD) is a standard technique to do noise removal in images. This technique was first proposed in~\citet{rudin1992nonlinear} and has since then been heavily used in the image processing community. 
It is well known that TVD gets rid of unwanted noise and also preserves edges in the image (see~\citet{strong2003edge}). For a survey of this technique from an image analysis point of view; see~\citet{chambolle2010introduction} and references therein.

The success of the TVD technique as a denoising mechanism motivates us to revisit this problem from a statistical perspective. In this paper, we are interested in the following statistical estimation problem. Consider observing $y = \theta^* + \sigma Z$ where $y \in \R^{n \times n}$ is a noisy matrix/image, $\theta^*$ is the true underlying matrix/image, $Z$ is a noise matrix consisting of independent standard Gaussian entries and $\sigma$ is an unknown standard deviation of the noise entries. Thus, in this setting, the image denoising problem is cast as a Gaussian mean estimation problem. Before defining the TVD estimator in this context, let us define total variation of an arbitrary matrix.

Let us denote the $n \times n$ two dimensional \emph{grid graph} by $L_n$ and denote its 
edge set by $E_n$. More precisely, the vertices in $L_n$ correspond to the pairs $(i, j) \in [n] \times [n]$ and its edge set $E_n$ consists of:
$$\mbox{all }((i, j), (k, \ell)) \in L_n \times L_n \mbox{ such that } |i - j| + |k-\ell| = 1\,.$$
We will use $L_n$ interchangeably for the graph as well as the underlying set of vertices. Now, thinking of $\theta \in \R^{n \times n}$ as a function on $L_n$ let us define
\begin{equation}\label{eq:TVdef}
\TV_{\mathrm{norm}}(\theta) \coloneqq \frac{1}{n} \sum_{(u,v) \in E_n} |\theta_{u} - \theta_{v}| = \frac{1}{n} \|D \theta\|_1
\end{equation} 
where $D$ is the usual edge vertex incidence matrix of size $2n(n - 1) \times n^2.$ The $1/n$ factor is just a normalizing factor so that if $\theta_{ij} = f(i/n,j/n)$ for some underlying differentiable function on the unit square then $\TV_{\mathrm{norm}}(\theta)$ is precisely the discretized Riemann approximation for $\int_{[0,1]^2} \big|\frac{\partial f(x,y)}{\partial x}\big| + \big|\frac{\partial f(x,y)}{\partial y}\big|.$ This $1/n$ scaling is termed as the \textit{canonical scaling} in~\cite{sadhanala2016total}.
The above notion of total variation extends the definition of \textit{variation} from differentiable functions on the unit square to arbitrary matrices. 
We can now define the TVD estimator, which is our main object of study.
\begin{equation*}
\hat{\theta}_{\textbf V} := \argmin_{\theta \in \R^{n \times n}: \TV_{\mathrm{norm}}(\theta) \leq \textbf{V}} \|y - \theta\|^2
\end{equation*}
where $\|.\|$ throughout this paper will denote the usual Frobenius norm for matrices. 
The TVD estimator is actually a family of estimators indexed by the tuning parameter $\textbf{V} > 0.$ We will measure the performance of our estimator in terms of its normalized mean squared error (MSE) defined as
\begin{equation*}
{\rm MSE}(\hat{\theta}_{\textbf{V}},\theta^*) \coloneqq \E_{\theta^*} \frac{\|\hat{\theta}_{\textbf{V}} - \theta^*\|^2}{N} 
\end{equation*}
where throughout this paper we denote $N = n^2.$ 

We defined the TVD estimator in its constrained form, however the penalized version is also popular in the literature, which is defined as follows:
\begin{equation*}
\hat{\theta}_{\lambda} \coloneqq \argmin_{\theta \in \R^{n \times n}} \|y - \theta\|^2 + \lambda\:\TV_{\mathrm{norm}}(\theta)
\end{equation*}
where $\lambda > 0$ is a tuning parameter. In this paper, we focus on the analysis of the constrained version. 

\subsection{Background and Motivation}
The $1$D version of this problem is a well studied problem (see, e.g.~\cite{tibshirani2005sparsity}) in nonparametric regression. In this setting, we again have $y = \theta^* + \sigma Z$ as before, where $y,\theta^*,Z$ are now vectors instead of matrices. The total variation of a vector $v \in \R^n$ can now be defined as
\begin{equation*}
\TV(v) \coloneqq \sum_{i = 1}^{n - 1} |v_{i + 1} - v_{i}|.
\end{equation*}
Again the above definition can be seen as a discrete Riemann approximation to $\int_{[0,1]} \big|f^{'}(x)\big| dx$ when $v_{i} = f(i/n)$ for some differentiable function $f.$ The constrained and the penalized versions of the TVD 
estimator can now be defined analogously. The penalized form seems to be more popular in the existing literature; in this case the TVD estimator is often referred to as fused lasso (see~\cite{tibshirani2005sparsity},~\cite{rinaldo2009properties}). In this 1D setting, it is known (see, e.g.~\cite{donoho1998minimax},~\cite{mammen1997locally}) that the TVD estimator is minimax rate optimal on the class of all bounded variation signals $\{\theta: \TV(\theta) 
\leq \mb V\}$ for  $\mb V > 0$. It is also shown in~\cite{donoho1998minimax} that no estimator, which is a linear function of $y$, can attain this minimax rate.

It is also worthwhile to mention here that TVD in the 1D setting has been studied as part of a general family of estimators which penalize discrete derivatives of different orders. These estimators have been studied in~\cite{steidl2006splines},~\cite{tibshirani2014adaptive} and by~\citet{kim2009ell_1} who coined the name \textit{trend filtering}. A continuous version of these estimators, where discrete derivatives are replaced by continuous derivatives, was proposed much earlier in the statistics literature by~\citet{mammen1997locally} under the name \textit{locally adaptive regression splines}.

Total variation of a signal can actually be defined over an arbitrary graph as the sum of absolute differences of the signal across edges of the graph. Trend Filtering on general graphs has been a popular research topic in the recent past; see~\cite{wang2016trend},~\cite{lin2016approximate}. A more recent paper,~\cite{ortelli2018}, studies TVD on tree graphs. The 1D setting corresponds to the chain graph on $n$ vertices whereas the 2D setting corresponds to the 2D lattice graph on $n^2 = N$ vertices.

The 2D TVD problem, while being much less studied than in its 1D counterpart, has enjoyed a recent surge of interest. Worst case performance of the TVD estimator has been studied in~\cite{hutter2016optimal},~\cite{sadhanala2016total},~\cite{ortelli2019oracle}. These results show that like in the 1D setting, the 2D TVD estimator is nearly minimax rate optimal over the class $\{\theta \in \R^{n \times n}: \TV_{\mathrm{norm}}(\theta) \leq \textbf{V}\}$ of bounded variation signals. In fact,~\cite{sadhanala2016total} also generalize the result of~\cite{donoho1998minimax} and prove that no linear function of $y$ can attain the minimax rate in the 2D setting as well.  
A representative of the state of the art risk bound for the TVD estimator in 2D setting is due 
to~\citet{hutter2016optimal} (see also~\cite{ortelli2019oracle}). They studied the penalized 
form of the TVD estimator and proved that there exist universal constants $C,\:c > 0$ such 
that by setting $\lambda = c \sigma \log n$ (where $\sigma$ is known), one gets
\begin{theorem}[H{\"u}tter and Rigollet]\label{thm:hrig}
	\begin{equation*}
	{\rm MSE}(\hat{\theta}_{\lambda},\theta^{*}) \leq C (\log n)^2 \min\{\sigma \frac{\TV_{\mathrm{norm}}(\theta^{*})}{\sqrt{N}}, \sigma^2 \frac{\|D \theta^{*}\|_0}{N}\}.
	\end{equation*}
\end{theorem}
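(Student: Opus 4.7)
The plan is to prove the bound by combining the basic inequality arising from optimality of $\hat{\theta}_\lambda$ with a careful Gaussian maximal inequality written in the dual of the total variation seminorm. Concretely, from $\|y-\hat{\theta}_\lambda\|^2 + \lambda\TV_{\mathrm{norm}}(\hat{\theta}_\lambda) \leq \|y-\theta^*\|^2 + \lambda\TV_{\mathrm{norm}}(\theta^*)$ and the model $y=\theta^* + \sigma Z$, I would obtain
\begin{equation*}
\|\hat{\theta}_\lambda - \theta^*\|^2 \leq 2\sigma\langle Z, \hat{\theta}_\lambda - \theta^*\rangle + \lambda\bigl[\TV_{\mathrm{norm}}(\theta^*) - \TV_{\mathrm{norm}}(\hat{\theta}_\lambda)\bigr].
\end{equation*}
The next step is to handle the stochastic term $\langle Z, \hat{\theta}_\lambda-\theta^*\rangle$ by decomposing any $v\in\mathbb{R}^{n\times n}$ as $v = \bar v \mathbf{1} + v^{\perp}$ with $v^{\perp}$ orthogonal to the kernel of $D$, so that $v^{\perp}=D^{+}Dv$. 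This yields
\begin{equation*}
\langle Z,v\rangle = \bar v\langle Z,\mathbf{1}\rangle + \langle (D^{+})^T Z, Dv\rangle \leq \bar v\langle Z,\mathbf{1}\rangle + n\,\TV_{\mathrm{norm}}(v)\,\|(D^{+})^TZ\|_\infty,
\end{equation*}
and the constant piece is a one-dimensional Gaussian contributing a negligible $O(\sigma^2/N)$ term.

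The main obstacle — and this is really the heart of the Hutter--Rigollet argument — is to control $\|(D^{+})^TZ\|_\infty$ uniformly. By Gaussian maximal inequality, $\mathbb{E}\|(D^{+})^TZ\|_\infty \leq C\sigma\sqrt{\log|E_n|}\cdot\max_{e\in E_n}\|(D^{+})^T \mathbf{1}_e\|_2$, so the problem reduces to estimating the worst column of $(D^{+})^T$, equivalently the effective resistance between endpoints of any edge in the 2D grid $L_n$. A classical spectral/harmonic analysis of the grid Laplacian gives $\max_e\|(D^{+})^T\mathbf{1}_e\|_2^2 = O(\log n)$, so that $\|(D^{+})^TZ\|_\infty \lesssim \sigma\log n$ with high probability. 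This is precisely why the threshold $\lambda = c\sigma\log n$ is chosen: it dominates the noise term in the dual $\ell_\infty$ norm, and then Hölder together with the bound $n\TV_{\mathrm{norm}}(\hat\theta_\lambda - \theta^*) \leq n\TV_{\mathrm{norm}}(\hat\theta_\lambda) + n\TV_{\mathrm{norm}}(\theta^*)$ and the $\lambda$-slack in the basic inequality delivers the slow rate $C(\log n)^2\sigma\TV_{\mathrm{norm}}(\theta^*)/\sqrt{N}$ after rescaling.

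For the fast rate $\sigma^2|D\theta^*|_0/N$ I would specialize to $S=\mathrm{supp}(D\theta^*)$ and decompose the TV seminorm additively: $\TV_{\mathrm{norm}}(\hat\theta_\lambda) \geq (1/n)\bigl(|[D\hat\theta_\lambda]_{S^c}|_1 - |[D(\hat\theta_\lambda-\theta^*)]_S|_1\bigr) + \TV_{\mathrm{norm}}(\theta^*) - (1/n)|[D(\hat\theta_\lambda-\theta^*)]_S|_1$. Substituting into the basic inequality together with the high-probability event $\|(D^{+})^TZ\|_\infty \lesssim \sigma\log n$ forces a cone condition of the form $|[D(\hat\theta_\lambda-\theta^*)]_{S^c}|_1 \leq 3\,|[D(\hat\theta_\lambda-\theta^*)]_S|_1$, at which point one applies a compatibility-type inequality $|[Dv]_S|_1 \leq \kappa(S)\|v\|\sqrt{|S|}$ for $v$ in this cone. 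For the 2D grid $\kappa(S)$ can be controlled by an $O(\sqrt{\log n})$ factor, and combining with $\lambda = c\sigma\log n$ yields the $(\log n)^2 \sigma^2|D\theta^*|_0/N$ side of the minimum. The two rates are combined by running whichever argument is tighter for the given $\theta^*$. The delicate step throughout is the graph-theoretic control of $(D^{+})^T$ for the 2D lattice — everything else is relatively standard penalized-estimator bookkeeping.
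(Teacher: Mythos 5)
This statement is quoted from H\"utter and Rigollet (2016); the paper does not prove it, so there is no internal proof to compare against. Your sketch reconstructs the original H\"utter--Rigollet argument: the basic inequality for the penalized estimator, the decomposition of $\langle Z, v\rangle$ through the pseudo-inverse $D^{+}$, H\"older in the dual $\ell_\infty$/$\ell_1$ pairing against $Dv$, and the $\ell_0$ rate via a support-splitting/compatibility argument. This is a genuinely different route from the one the present paper uses for its analogous constrained-estimator bound (Theorem~\ref{Thm:1}), which avoids $D^{+}$ entirely and instead bounds the Gaussian width of $\{\TV(\theta)\le V,\ \overline\theta=0\}$ by metric entropy computed through recursive dyadic partitioning plus a discrete Gagliardo--Nirenberg--Sobolev inequality; the authors explicitly flag this contrast in the remark after Theorem~\ref{Thm:1}. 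The pseudo-inverse route buys the penalized-estimator result and, crucially, the $\ell_0$ adaptive bound in terms of $|D\theta^*|_0$; the entropy route generalizes to the tangent-cone analysis that drives the paper's main adaptive result for piecewise constant signals, a regime where the $\ell_0$ bound is vacuous.

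Two caveats on your sketch. First, the entire difficulty of the theorem is concentrated in the claim $\max_e\|D^{+}\mathbf{1}_e\|_2^2=O(\log n)$, which you assert by appeal to a ``classical spectral/harmonic analysis''; this is the lemma H\"utter and Rigollet actually have to prove (via the explicit discrete Fourier diagonalization of the grid Laplacian), and your identification of this quantity with the effective resistance is not exact: writing $L=D^TD$, one has $D^{+}\mathbf{1}_e = L^{+}(\mathbf{1}_{e^+}-\mathbf{1}_{e^-})$, so $\|D^{+}\mathbf{1}_e\|_2^2$ is a quadratic form in $(L^{+})^2$, not in $L^{+}$ as the effective resistance would be. Second, for the fast rate the compatibility-type constant on the grid is bounded by an absolute constant (via $|[Dv]_S|_1\le\sqrt{|S|}\,\|Dv\|_2\le\sqrt{|S|}\,\|D\|_{\mathrm{op}}\|v\|$ and the bounded degree of $L_n$), so no extra $\sqrt{\log n}$ enters there; the $(\log n)^2$ in that branch comes entirely from $\lambda^2$. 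Neither point changes the architecture, but the first is where a complete proof would have to do real work.
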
 
where $\|\cdot\|_0$ is the usual $\ell_0$ norm. 

For convenience, we will henceforth use the usual $O(\cdot)$ notation to compare sequences. We write $a_n = O(b_n)$ if there exists a constant $C > 0$ such that $a_n \leq C\:b_n$ for all sufficiently large $n$. We also use $a_n = \tilde O(b_n)$ to denote $a_n = O(b_n(\log n)^C)$ for some $C > 0$.

In words, the bound in Theorem~\ref{thm:hrig} is a minimum of two terms. The first term gives the $\ell_1$ rate scaling like $O(1/\sqrt{N})$ for bounded variation functions. The second one is the $\ell_0$ rate which can be much faster than the $O(1/\sqrt{N})$ rate if $|D \theta^{*}|_0$ is small enough. In spite of the above works, there are still a couple of unexplored aspects regarding 2D TVD, specifically its adaptivity to piecewise constant signals and minimax optimality without tuning, which are the focus of the present paper. We discuss them now.

\subsubsection{Adaptivity to piecewise constant signals} 
Observe that the total variation semi norm is a convex relaxation for the number of times the true signal $\theta^*$ changes values along the neighbouring vertices. 
This fact suggests that the TV estimator \textit{might} perform very 
well if the true signal is indeed piecewise constant. This phenomenon is now fairly well understood in the 1D setting. 
In this setting, suppose  that the true vector $\theta^*$ is piecewise constant with $k+1$ contiguous pieces or blocks. Given data $y \sim N_n(\theta^*,\sigma^2 I_n)$, an oracle estimator, which knows the locations of the jumps, would just estimate the signal $\theta^*$ by the mean of the data vector $y$ within each block. 
It can be easily checked that the oracle estimator will have MSE bounded by $\sigma^2 (k+1)/n.$ Recent works (see~\cite{dalalyan2017tvd},~\cite{lin2016approximate}) studied the penalized TVD estimator and showed that if the \textit{minimum length} of the blocks where $\theta^*$ is constant is not too small (scales like $O(n/k)$) 
and if the tuning parameter $\lambda$ is set to be equal to an appropriate function of the unknown $\sigma$ and $n,$ then an oracle risk $O(k/n)$ could be achieved up to some additional logarithmic factors in $k$ and $n.$ In~\cite{guntuboyina2017adaptive}, this adaptive behaviour was established for the ideally tuned constrained form of the estimator with slightly better log factors. Thus, we can say that in the 1D setting, the TVD estimator is optimally adaptive to piecewise constant signals.

This motivates us to wonder whether similar adaptivity holds in the 2D setting. In this paper, we investigate adaptivity to signals/matrices which are \textit{piecewise constant on $k < < N$ axis aligned rectangles}. Such adaptivity of the 2D TVD estimator has not been explored at all in the literature. Estimation of functions which are piecewise constant on axis aligned rectangles are naturally motivated by methodologies such as CART (see e.g~\citet{breiman2017classification}) which produce outputs of the same form.
Recently, adaptation to piecewise constant structure on rectangles has been of interest in the nonparametric shape constrained function estimation literature also (see Theorem $2.3$ in~\citet{chatterjee2018matrix} and Theorems $2$ and $5$ in~\citet{han2017isotonic}). See Section~\ref{sec:pc} where we discuss some even more recent (which appeared after we uploaded this paper) works about estimating piecewise constant functions on axis aligned rectangles. Here is the main question that we address in this paper.

\noindent \textit{Q1: If the underlying $\theta^*$ is piecewise constant on at most $k < < N$ axis aligned rectangles; can the ideally tuned TVD estimator attain a faster rate of convergence than the $\tilde{O}(1/\sqrt{N})$ rate}?

Basically we are asking the question whether the ideally tuned TVD estimator adapts to truths which are piecewise constant on a few axis aligned rectangles, which is a different notion of sparsity than the sparsity constraint of $\|D \theta^*\|_0$ being small. As a simple instance of $\theta^*$ being piecewise constant on rectangles, consider $\theta^*$ to be of the following form:
\[
\theta^{*} =
\left[ {\begin{array}{cc}
	\mb 0_{n \times n/2} & \mb 1_{n \times n/2} \\
	\end{array} } \right]
\]

In this case, we have $\|D \theta^*\|_0 = O(\sqrt{N})$ and $\TV_{\mathrm{norm}}(\theta^*) = O(1).$ Note that the $\ell_0$ bound of~\cite{hutter2016optimal} will give us an upper bound on the MSE scaling like $\tilde{O}(1/\sqrt{N})$ which is already given by the $\ell_1$ bound. Thus, the result of~\cite{hutter2016optimal} does not help in answering our question and suggests there is no adaptation. In Theorem~\ref{thm:adap} of this paper, we show that the {ideally} tuned TVD estimator indeed adapts to piecewise constant matrices on axis aligned rectangles and provably attains a rate of convergence scaling like $\tilde{O}(1/N^{3/4})$ which is strictly faster than the $\ell_1$ rate $\tilde{O}(1/\sqrt{N})$. However, we also show that this $\tilde{O}(1/N^{3/4})$ rate is tight and thus the TVD estimator is not able to attain the $\tilde{O}(1/N)$ parametric rate that would be achieved by an oracle estimator. This is the main contribution of this paper and is the first result of its type in the literature as far as we are aware.

\subsubsection{Minimax rate optimality without tuning}
Existing results such as Theorem~\ref{thm:hrig}, along with minimax lower bounds shown in~\cite{sadhanala2016total}, show that the $\tilde{O}(\tfrac{\textbf{V}}{\sqrt{N}})$ rate attained by the penalized TVD estimator is near minimax rate optimal. Thus we can say that the penalized TVD estimator is near minimax rate optimal over the parameter space $\{\theta \in \R^{n \times n}: \TV_{\mathrm{norm}}(\theta) \leq \textbf{V}\}$, \textit{simultaneously} over $\textbf{V}$ and $N$. However, this penalized TVD estimator needs to set a tuning parameter $\lambda$ which depends on the unknown $\sigma$ and an implicit constant $C$ which can be potentially difficult to set in practice. This naturally raises a question which is unresolved in the literature so far as we are aware:

\noindent\textit{Q2: Does there exist a completely data driven estimator which does not depend on any unknown parameters of the problem and yet achieves MSE scaling like $\tilde{O}(\tfrac{\textbf{V}}{\sqrt{N}})$, thus being simultaneously minimax rate optimal over 
$\textbf{V}$ and $N$?}

In Theorem~\ref{Thm:notuning} of this paper we answer this question in the affirmative by constructing such a fully data driven estimator.


The rest of the paper is organised as follows. In Section~\ref{Sec:results}, we state 
our main results. Then in Section~\ref{sec:discuss}, we discuss connections of our results with some recent works and also present simulation studies which support and verify our main theorems. The proofs of our main results involve sharp bounds on the {\em Gaussian widths} (see \eqref{def:gauss_width} in Section~\ref{sec:gauss_width_bnd_results}) for some special classes of matrices. We obtain these bounds based on a generic approach which we detail in Section~\ref{sec:chaining}. The next five sections describe the proofs of our main theorems and intermediate results. Section~\ref{Sec:appendix} is the appendix which contains proofs of some auxiliary results. 
\medskip

\noindent \textbf{Instructions for the reader}

\smallskip

In all the proofs of our results from Section~\ref{Sec:5} onwards, we will use $\TV(\cdot)$ to denote the \emph{unnormalized} version of~\eqref{eq:TVdef}. More precisely, for a $n \times n$ matrix $\theta$ we denote
\begin{equation}\label{eq:unnorm}
\TV(\theta) \coloneqq \sum_{(u,v) \in E_n} |\theta_{u} - \theta_{v}|.
\end{equation}
We adopt this convention because we believe it is easier to read and interpret the proofs with the unnormalized definition while it is instructive to use the normalized version for our theorems to facilitate interpretation of the risk bounds as a function of the sample size $N = n^2$. Also we will generically use $V$ to denote the unnormalized total variation whereas in Sections~\ref{sec:intro}--\ref{sec:discuss} we use \emph{bold} $\mb V$ to denote the {\em corresponding} normalized total variation.
In all our theorems presented in the next section we use \emph{bold} $\mb V^* $ to denote $\TV_{\mathrm{norm}}(\theta^*)$ where $\theta^*$ is the underlying true matrix and in all our proofs we use $V^* = \TV(\theta^*)$ for the corresponding unnormalized version.

\textbf{Acknowledgements}: This research was supported by a NSF grant and an IDEX grant from Paris-Saclay.  We thank the anonymous referees for extremely detailed comments and suggestions about the paper. These comments and suggestions have helped us to a great extent to improve our article. The project started when both the authors were at the University of Chicago.

\section{Main Results}\label{Sec:results}
\subsection{Constrained TVD}
Our first result states a risk bound of $\hat{\theta}_{\textbf{V}}$ under the bounded variation constraint. 
\begin{theorem}\label{Thm:1}
	Let $\theta^*$ be an arbitrary $n \times n$ matrix and $N = n^2$. Suppose the tuning 
	parameter is chosen such that $\mb V \geq \mb V^*$. Then the following risk bound 
	is true for a universal constant $C > 0$:                                                                                                                                                                                  
	\begin{equation*}
	{\rm MSE}(\hat{\theta}_{\mb V},\theta^*) \leq C \big( \sigma \frac{\mb V}{\sqrt{N}}\, (\log \e N)^{5/2} +  \frac{\sigma^2}{N} \big).
	\end{equation*}
\end{theorem}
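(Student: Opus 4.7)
The plan is the standard basic-inequality argument for constrained least squares, combined with the $\ell_\infty/\ell_1$ duality trick used by Hutter and Rigollet to control Gaussian suprema over TV balls. Since $\TV_{\mathrm{norm}}(\theta^*) = \mb V^* \leq \mb V$, $\theta^*$ is feasible and optimality of $\hat{\theta}_{\mb V}$ yields the basic inequality
\begin{equation*}
\|\Delta\|^2 \leq 2\sigma\,\langle Z, \Delta\rangle, \qquad \Delta \coloneqq \hat{\theta}_{\mb V} - \theta^*,
\end{equation*}
with $\TV_{\mathrm{norm}}(\Delta) \leq 2\mb V$ by the triangle inequality. The whole task therefore reduces to upper bounding $\langle Z, \Delta\rangle$ using only this TV constraint on $\Delta$.

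Because $\TV_{\mathrm{norm}}$ is translation invariant, I would split $\Delta = \bar{\Delta}\,\mathbf{1} + \Delta_0$ with $\mathbf{1}^T \Delta_0 = 0$ and treat the two pieces separately. The mean contribution $\bar{\Delta}\langle Z,\mathbf{1}\rangle$ is controlled by Cauchy--Schwarz using $|\bar{\Delta}| \leq \|\Delta\|/\sqrt{N}$ together with $\E\langle Z,\mathbf{1}\rangle^2 = N$, and is then absorbed into the left-hand side via AM--GM at the cost of the additive $(\sigma+\sigma^2)/N$ term that appears in the statement. For the centered piece I would exploit $\Delta_0 = D^{+} D\Delta_0$ together with $\ell_\infty/\ell_1$ duality:
\begin{equation*}
\langle Z, \Delta_0\rangle = \langle (D^{+})^T Z, D\Delta_0\rangle \leq \|(D^{+})^T Z\|_\infty \cdot \|D\Delta_0\|_1 \leq 2n\mb V \cdot \|(D^{+})^T Z\|_\infty,
\end{equation*}
and then bound the random quantity by noting that each coordinate of $(D^{+})^T Z$ is a centered Gaussian whose variance equals the effective resistance between the endpoints of the corresponding edge; since these resistances are $O(\log n)$ on the $n\times n$ grid, a Gaussian-maxima bound over the $2n(n-1)$ edges yields $\E\|(D^{+})^T Z\|_\infty = O(\log n)$.

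Plugging these two estimates back into the basic inequality and taking expectations gives $\E\|\Delta\|^2/N \leq C(\sigma^2/N + \sigma\mb V \log n / \sqrt{N})$, which is the stated bound except for the extra $\log(1+2\mb V n)$ factor. I expect the main technical effort to lie in producing this remaining logarithm. The natural route is to upgrade the expectation estimate above to a high-probability statement via Borell--TIS applied to the Gaussian supremum over a localized TV ball, and then to peel over dyadic scales of $\|\Delta\|$: the a priori range estimate $\|\Delta_0\|_\infty \leq 2n\mb V$ (obtained by summing $|D\Delta_0|$ along a grid path) caps the relevant scales, leaving only $O(\log(1+\mb V n))$ dyadic shells, so a union bound over them produces exactly the advertised factor before one integrates the tail back to an MSE bound.
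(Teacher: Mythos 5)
Your overall route --- basic inequality plus the $\ell_\infty/\ell_1$ duality $\langle Z,\Delta_0\rangle \le \|(D^{+})^{T}Z\|_\infty\,\|D\Delta_0\|_1$ --- is genuinely different from the paper's proof, which never touches the pseudo-inverse of $D$ and instead bounds the Gaussian width of $\{\theta:\mathrm{TV}(\theta)\le 2V,\ \overline{\theta}=0\}$ via metric entropy: a greedy recursive rectangular partitioning scheme combined with a discrete Gagliardo--Nirenberg--Sobolev inequality, fed into Dudley's entropy integral. Your route is the constrained-estimator analogue of Hutter--Rigollet and is viable in principle, but as written it has a real gap at the one step that carries all the difficulty. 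The coordinate of $(D^{+})^{T}Z$ indexed by an edge $e=(u,v)$ is $N(0,\|D^{+}_{\cdot e}\|_2^2)$, and since $D^{+}=L^{+}D^{T}$ (with $L=D^{T}D$ the grid Laplacian), this variance equals $[D(L^{+})^{2}D^{T}]_{ee}=\|L^{+}(e_u-e_v)\|_2^2$. This is \emph{not} the effective resistance $[DL^{+}D^{T}]_{ee}=\langle e_u-e_v, L^{+}(e_u-e_v)\rangle$; indeed the effective resistance between the two endpoints of an edge of the 2D grid is $\Theta(1)$, not $\Theta(\log n)$, so your stated justification identifies the wrong quantity and gets the right order only by coincidence. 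Showing that $\max_e\|L^{+}(e_u-e_v)\|_2^2\lesssim \log n$ on the $n\times n$ grid is precisely the central technical lemma of Hutter--Rigollet, proved there by an explicit Fourier diagonalization of $L$; it cannot be asserted in one line, and without it your argument delivers no bound at all.

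Your final paragraph also reflects a misunderstanding of the target. The factor $\log(1+2\mathbf{V}n)$ makes the stated bound \emph{weaker}; if your duality argument really gave $MSE \le C(\sigma\mathbf{V}\log n/\sqrt{N} + \sigma^2/N)$, that would already imply the theorem (and improve on it), so there is nothing to ``produce'' and the proposed Borell--TIS peeling over dyadic shells is unnecessary --- and, as a device for manufacturing an extra logarithm in an upper bound, not meaningful. The extra logarithm in the paper's statement is an artifact of its entropy-based proof (it enters through the discretization of the block means over a grid of cardinality $1+2Ln/\epsilon$ in Proposition~\ref{prop:tvcovering}), not a feature your argument needs to reproduce. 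The remaining pieces of your proposal (feasibility of $\theta^*$, $\mathrm{TV}(\Delta)\le 2\mathbf{V}$ by the triangle inequality, handling the mean part via Cauchy--Schwarz and AM--GM, and $\Delta_0=D^{+}D\Delta_0$ for the centered part) are all fine.
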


\begin{remark}\label{rem1}
	The above result is similar to the $\ell_1$ bound of~\cite{hutter2016optimal}, the difference being the above risk bound holds for the constrained TVD estimator while the existing result of~\cite{hutter2016optimal} holds for the penalized estimator. For any sequence of $\mb V > 0$ (possibly growing with $n$ although the canonical scaling is when $\mb V = O(1)$), the minimax lower bound results (mentioned earlier) of~\cite{sadhanala2016total} now imply the minimax rate optimality (up to log factors) of the constrained TVD estimator $\hat{\theta}_{\mb V}$ over the parameter space $\{\theta \in \R^{n \times n}: \TV_{\mathrm{norm}}(\theta) \leq \mb V\}$.
\end{remark}

\begin{remark}
	As is made clear in Section~\ref{sec:thm1}, our technique for proving Theorem~\ref{Thm:1} is completely different from the technique used to prove the result of~\cite{hutter2016optimal}. While they analyze the properties of the pseudo-inverse of the edge incidence matrix $D,$ our proof relies on computing relevant Gaussian widths by recursive partitioning. Moreover, ingredients and ideas from this proof are also used crucially in the proofs of our other results.
\end{remark}

\subsection{Adaptive risk bound}
\label{sec:adaptive}
Now we come to the main result of this paper which is about proving adaptive risk bounds for $\theta^*$ which are piecewise constant on at most $k$ axis aligned rectangles where $k$ is a positive integer much smaller than $N.$ 
We call a subset $R \subset L_n$ a (axis aligned) rectangle if it is a product of two intervals. For a generic rectangle $R = ([a, b] \cap \N) \times ([c, d] \cap \N)$, we define $\nr(R)$ and $\nc(R)$ to be the cardinalities of $[c, d] \cap \N$ and $[a, b] \cap \N$ 
respectively. In words, $\nr(R)$ and $\nc(R)$ are simply the numbers of rows and columns of $R$ respectively if one views $R$ as a two-dimensional array of points. Then we define its aspect ratio to be $A(R) \coloneqq\max\{\frac{\nr(R)}{\nc(R)},\frac{\nc(R)}{\nr(R)}\}$. For a given matrix $\theta \in \R^{n \times n}$ we define $k(\theta)$ to be the cardinality of the minimal partition of $L_n$ into rectangles $R_1,\dots,R_{k(\theta)}$ such that $\theta$ is constant on each of the rectangles. Next we state our main result for the 2D TVD estimator.

\begin{theorem}\label{thm:adap}
	Let $\theta^* \in \R^{n \times n}$ be the underlying true matrix with $\TV_{\mathrm{norm}}(\theta^*) > 0$ and  $R_1^*,\dots,R_{k(\theta^*)}^*$ be its rectangular level sets which form a partition of the 2D grid $L_n.$ In addition, suppose the rectangles $R_i^*$ have bounded aspect ratio, that is there exists a 
	constant $c > 0$ such that $\max_{i \in [k(\theta^*)]} A(R_i^*)$ $\leq c$. Then we have the following risk bound:
	\begin{equation*}
	{\rm MSE}(\hat{\theta}_{\mb V},\theta^*) \leq C \big[(\mb V - \mb V^*)^2 + 
	\sigma^2 (\log \e n)^{9} \frac{\:k(\theta^*)^{5/4}}{N^{3/4}}\big]\,. 
	\end{equation*}
	Here $C$ is a constant that only depends on $c.$
\end{theorem}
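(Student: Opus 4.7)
The proof follows the variational framework for constrained least-squares estimators. Let $K = \{\theta \in \mathbb{R}^{n\times n}: \mathrm{TV}_{\mathrm{norm}}(\theta) \leq \mathbf{V}^*\}$ and let $T = T_{\theta^*}(K)$ denote its tangent cone at $\theta^*$. Since $\hat\theta_{\mathbf V^*} - \theta^* \in K - \theta^* \subseteq T$, the squared estimation error is controlled, with high probability, by $t_*^2$, where $t_*$ is the smallest positive solution of the fixed-point inequality $t^2 \geq W(t) := \mathbb{E}\sup_{v \in T,\ \|v\| \leq t} \langle \sigma Z, v\rangle$. The entire proof is therefore an upper bound on the localized Gaussian width $W(t)$.

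The first step is a first-order analysis of $T$. Split the edges of $L_n$ into $E^\partial$ (connecting two distinct rectangles of the partition) and $E^0$ (interior to some $R_i$). The tangent-cone condition reads
\begin{equation*}
\sum_{(u,w)\in E^0}|v_u - v_w| \;\leq\; -\sum_{(u,w)\in E^\partial} \mathrm{sign}(\theta^*_u-\theta^*_w)(v_u - v_w) \;=\; \langle \phi, v\rangle,
\end{equation*}
where $\phi \in \mathbb{R}^N$ is a fixed vector supported on the boundary vertices, with $\|\phi\|_\infty \leq 4$. The bounded-aspect-ratio hypothesis implies that the total number of boundary vertices is $O(\sqrt{kN})$, so $\|\phi\|_2 \leq C(kN)^{1/4}$. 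Cauchy--Schwarz then yields, for every $v \in T$ with $\|v\|\leq t$, the bound $\sum_{(u,w)\in E^0}|v_u - v_w| \leq C(kN)^{1/4} t$, which is crucially much tighter than the ambient TV budget $\mathbf{V}^*$.

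The second step is to decompose $v = v_0 + v_1$, where $v_0 = \sum_{i=1}^{k} \bar v_{R_i}\mathbf{1}_{R_i}$ is the rectangle-wise average. The part $v_0$ lives in a $k$-dimensional subspace and so contributes at most $\sigma t\sqrt{k}$ to $W(t)$. For the residual $v_1$ (mean-zero on each $R_i$, whose total un-normalized interior TV is bounded by $C(kN)^{1/4}t$), we apply a recursive dyadic-partitioning entropy argument, analogous to the one used in the proof of Theorem~\ref{Thm:1} but now seeded with the partition $\{R_1,\dots,R_k\}$ and iterated through $O(\log n)$ scales. At each scale the discrete 2D Sobolev--Poincar\'e inequality
\begin{equation*}
\|v|_R - \overline{v}_R\|_2 \;\leq\; C\sum_{(u,w)\in E(R)} |v_u - v_w|,
\end{equation*}
valid on any rectangle $R$ with a constant $C$ independent of $R$, converts $\ell_1$ TV-budget into $\ell_2$ mass. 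Combining the per-rectangle, per-scale Gaussian widths via Cauchy--Schwarz, and using the interior-TV bound from the first step together with $\sum_i \|v_1|_{R_i}\|^2 \leq t^2$, should produce $W(t) \leq C\sigma\sqrt{k}\,t + C_{\log}\,\sigma\, k^{5/8}\, N^{1/8}\, t$, with $C_{\log}$ polylogarithmic in $n$. Inserting this into $t^2 \geq W(t)$ gives $t_*^2 \lesssim C_{\log}^2 \sigma^2 k^{5/4} N^{1/4}$, which upon division by $N$ is the claimed MSE bound, with the polylog factor collecting into $(\log en)^9$ after careful accounting of the $O(\log n)$ dyadic levels.

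\emph{Main obstacle.} The heart of the argument is the Gaussian-width estimate in the second step: producing a bound that is \emph{linear} in $t$ with complexity coefficient $k^{5/8} N^{1/8}$, rather than the weaker $(kN)^{1/4}$ that one would get by applying a Hutter--Rigollet-type dual bound rectangle-by-rectangle (which would only reproduce the $\ell_1$ rate $\sqrt{k/N}$). The improvement comes from combining the small interior-TV budget from the first step with the multiscale recursive partitioning, where the Sobolev--Poincar\'e inequality provides the crucial $\ell_1 \to \ell_2$ conversion at each scale; keeping the bounded-aspect-ratio hypothesis alive throughout the dyadic recursion (so that the Sobolev--Poincar\'e constant remains uniform) is essential, and a careful accounting of the logarithmic factors accumulating at each of the $O(\log n)$ levels is what gives the final $(\log en)^9$ pre-factor.
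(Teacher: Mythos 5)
Your overall architecture (tangent cone, localized Gaussian width, fixed-point equation $t^2 \geq W(t)$) matches the paper's route via Theorem~\ref{bellecthm}, and the intermediate bound you aim for, $W(t) \lesssim \sigma k^{5/8} N^{1/8} t$ up to logarithms, is exactly the bound the paper proves. The gap is in the mechanism you propose for reaching it. In your first step you collapse the tangent-cone constraint $\sum_{e \in A^c} |\Delta_e v| \leq \langle \phi, v\rangle$ into the scalar bound $\sum_{e \in A^c}|\Delta_e v| \leq C (kN)^{1/4} t$ by Cauchy--Schwarz, and everything afterwards uses only this scalar TV budget together with $\|v\| \leq t$. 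But the Gaussian width of $\{v : \TV(v) \leq V,\ \|v\| \leq t\}$ is of order $V$ up to logarithms, with a matching lower bound: taking $v$ supported on $s \asymp V^2/t^2$ well-separated cells with values $\pm t/\sqrt{s}$ gives $\TV(v) \lesssim t\sqrt{s} \leq V$, $\|v\| = t$, and Gaussian width $\gtrsim t\sqrt{s} \asymp V$. With $V = (kN)^{1/4}t$ this width is $(kN)^{1/4}t$, and the fixed point then returns only the slow rate $\sqrt{k/N}$, not $k^{5/4}N^{-3/4}$. The dyadic recursion combined with the discrete Poincar\'e inequality cannot beat this: that machinery is precisely what proves the width of a TV ball is \emph{linear} in the TV radius (it is the content of the proof of Theorem~\ref{Thm:1}), so it cannot manufacture the coefficient $k^{5/8}N^{1/8}$ out of a $(kN)^{1/4}t$ budget. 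The bound you assert ``should'' follow therefore does not follow from the constraints you have retained.

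What is missing is the exploitation of the \emph{self-referential} form of the constraint, which the paper never discards. Concretely, the paper keeps the per-rectangle statement ``$\TV(\theta_{R_i})$ is at most the $\ell_1$ norm of the boundary values of $\theta$ on $\partial R_i$ plus a wiggle room $\delta_i$,'' and then (i) shows via a minimum-row/minimum-column argument that the $\delta_i$ sum to only $\Delta(\theta^*) \asymp \sqrt{k}$, not $(kN)^{1/4}$ (Lemma~\ref{lem:imp}); (ii) recursively reduces to submatrices with a single active boundary (Lemma~\ref{lem:gw4to1}); and (iii) --- the decisive step --- splits such a submatrix into column blocks of geometrically increasing width measured from the active boundary and shows that the row-TV budget of the block at distance $n_j$ from that boundary decays like $t\sqrt{m/n_j}$ (Lemma~\ref{lem:A}), which feeds an anisotropic metric entropy bound scaling like $\sqrt{m}/\tau^2$ (Lemma~\ref{lem:main}) and yields $\GW(\M^{\obr}(m,n,\delta,t)) \lesssim n^{1/4}\sqrt{t+\delta}$ --- note the $\sqrt{t}$, rather than $t$ times the boundary $\ell_1$ norm. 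It is this decay of the TV budget away from the active boundary, unavailable once the constraint has been reduced to a single scalar, that produces the $n^{1/4} = N^{1/8}$ complexity and hence the $N^{-3/4}$ rate.
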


\begin{remark}
Theorem~\ref{thm:adap} is really a statement about an ideally tuned constrained TVD estimator. One way to interpret it is that if the tuning parameter $\mb V$ is chosen such that $(\mb V - \mb V^*)^2 \leq C \sigma^2 (\log \e n)^{9} \frac{\:k(\theta^*)^{5/4}}{N^{3/4}}$ then the $\tilde{O}(\frac{\:k(\theta^*)^{5/4}}{N^{3/4}})$ rate of convergence holds. 
\end{remark}

\begin{remark}
	One consequence of the above theorem is that when $k(\theta^*) = O(1)$ then the ideally tuned TVD estimator attains a $\tilde{O}(N^{-3/4})$ rate. This rate is faster than the $\tilde{O}(N^{-1/2})$ rate that is available in the literature. Our main focus here has 
	been to attain the right exponent for $N$. The exponent of $k(\theta^*)$ and $\log n$ may not be 
	optimal. Since the current proof of this theorem is fairly involved technically, 
	obtaining the best possible exponents of $k(\theta^*)$ and $\log n$ is left for future research 
	endeavors. See Section~\ref{sec:discuss} for more discussions about the proof of the above theorem and comparisons with existing results. 
\end{remark}

\begin{remark}
	We think a bounded aspect ratio condition would actually be necessary for the $O(N^{-3/4})$ rate to hold in the above theorem; see Section~\ref{sec:bddasp} for more on this issue.
\end{remark}


A natural question is whether our upper bound in Theorem~\ref{thm:adap} is tight. Our next theorem says that, in the low $\sigma$ limit, the $N^{-3/4}$ rate is not improvable even if $k(\theta^*) = 2.$ 

\begin{theorem}\label{thm:lbd}
	Let $\theta^*_{ij} =  1$ if $j > n/2$ and $0$ otherwise. Thus, $\theta^*$ is of the following form:
	\[
	\theta^{*} =
	\left[ {\begin{array}{cc}
		\mb 0_{n \times n/2} & \mb 1_{n \times n/2} \\
		\end{array} } \right]
	\]
	Clearly $k(\theta^*) = 2.$ In this case, we have a lower bound to the risk of the ideally constrained TVD estimator. 
	\begin{equation*}
	\lim_{\sigma \to 0}\frac{1}{\sigma^2}{\rm MSE}(\hat{\theta}_{\mb V^*},\theta^*) \geq  \frac{c} {N^{3/4}}\,.
	\end{equation*}
	Here $c > 0$ is a universal constant. 
\end{theorem}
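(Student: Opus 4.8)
The plan is to exhibit, in the noiseless limit $\sigma \to 0$, a direction in which the constrained TVD estimator is forced to make an error of order $N^{-3/4}$ no matter how small $\sigma$ is. The key structural fact is that when $\sigma \to 0$, the behaviour of $\hat\theta_{\mb V^*}$ near $\theta^*$ is governed by the projection of the noise onto the \emph{tangent cone} of the constraint set $\{\theta : \TV_{\mathrm{norm}}(\theta) \le \mb V^*\}$ at the point $\theta^*$. Concretely, writing $y = \theta^* + \sigma Z$ and rescaling, one has $\hat\theta_{\mb V^*} - \theta^* \approx \sigma\, \Pi_{\mathcal T}(Z)$ where $\mathcal T$ is the tangent cone at $\theta^*$, so that $\lim_{\sigma \to 0} \sigma^{-2} MSE(\hat\theta_{\mb V^*},\theta^*) = \tfrac1N \E \|\Pi_{\mathcal T}(Z)\|^2$. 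This last quantity is (a constant multiple of) the square of the Gaussian width of $\mathcal T \cap \mathbb S^{N-1}$, so the theorem reduces to a lower bound on that Gaussian width, or more simply to exhibiting an explicit subset of $\mathcal T$ on which $\E \|\Pi Z\|^2 \gtrsim N^{1/4}$.

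The first step is therefore to identify the tangent cone $\mathcal T$ at the specific $\theta^*$ which is $0$ on the left half and $1$ on the right half. The active edges of $D$ at $\theta^*$ are exactly the $n$ horizontal edges straddling the middle column; $\TV_{\mathrm{norm}}(\theta^*) = 1$ is achieved along these edges with a fixed sign pattern (the value increases from left to right). Hence $\mathcal T$ consists of those $\delta \in \R^{n\times n}$ for which the directional derivative of $\TV_{\mathrm{norm}}$ at $\theta^*$ is nonpositive, i.e. the total increase across the middle edges does not grow — equivalently $\sum_{i=1}^n (\delta_{i,n/2+1} - \delta_{i,n/2}) \le 0$ — while the variation along all inactive edges is unconstrained to first order (it only enters at second order and contributes a term that vanishes as one scales $\delta \to 0^+$). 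So $\mathcal T$ is (the closure of) a half-space-like cone: one linear inequality on the jump across the central cut, with no other constraint. The second step is to lower bound $\E\|\Pi_{\mathcal T} Z\|^2$. Since $\mathcal T$ contains a large subspace — e.g. all $\delta$ supported away from the central two columns, together with all $\delta$ whose central jump sums to zero — projecting $Z$ onto $\mathcal T$ already recovers almost all of $Z$'s energy; but we need the \emph{excess} over the oracle. The right way is: the oracle (which knows the partition into two rectangles) has error $\sigma^2 \cdot 2/N$, while the TVD estimator's error is $\sigma^2 \E\|\Pi_{\mathcal T}Z\|^2/N$, and the claim is the latter exceeds $c N^{-3/4} = c N^{1/4}/N$, i.e. $\E\|\Pi_{\mathcal T}Z\|^2 \ge c N^{1/4}$.

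To get the $N^{1/4}$ lower bound on $\E\|\Pi_{\mathcal T}Z\|^2$, I would not use the full cone but a carefully chosen low-dimensional sub-cone of $\mathcal T$ inside which the projection is easy to analyze. The natural candidate: restrict to $\delta$'s that are piecewise constant on a grid of $\approx \sqrt N$ sub-rectangles partitioning the central band (or the whole square), subject to the single inequality constraint; on such a coordinate block the projection of $Z$ behaves like the projection of a $\sqrt N$-dimensional Gaussian onto a half-space, whose squared norm has expectation of order $\sqrt N = N^{1/4}\cdot N^{1/4}$... — more carefully, one wants to combine $\approx \sqrt N$ independent ``soft'' jump directions each contributing order $N^{1/4}$ in squared excess, or $\approx N^{1/4}$ blocks each contributing order $N^{1/4}$; the bookkeeping must land on $\E\|\Pi Z\|^2 \gtrsim N^{1/4}$. \textbf{The main obstacle} is precisely making this counting rigorous: we must show the chosen sub-cone of $\mathcal T$ really is contained in $\mathcal T$ (i.e. the second-order terms from the inactive edges, after scaling $\delta$ down, do not violate the constraint — this is where one uses that $\theta^*$ takes only two values so the perturbation analysis is clean), and we must lower bound the Gaussian projection onto this sub-cone by a genuinely dimension-growing quantity, which requires either an explicit feasible random direction argument (plug in a random $\delta$ and bound $\langle Z,\delta\rangle^2/\|\delta\|^2$ from below in expectation) or a Sudakov-type packing lower bound on the relevant cone. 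I expect the explicit-direction route to be cleanest: choose $\delta$ depending on the signs of block-averages of $Z$ along the central cut so that $\langle Z, \delta\rangle$ is a sum of $\approx \sqrt N$ absolute values of block-sums (each of size $\approx N^{1/4}$ in magnitude and $\approx N^{1/4}$ in $\ell_2$-normalization), yielding the claimed $N^{-3/4}$ rate after dividing by $N$.
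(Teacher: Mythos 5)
Your overall strategy---reduce, via the Oymak--Hassibi identity, to a lower bound on the Gaussian width of the tangent cone at $\theta^*$, and then produce an explicit feasible direction whose sign pattern on the central column is chosen from block sums of $Z$---is the same as the paper's. However, there is a genuine error at the key step: your characterization of the tangent cone. You claim that the variation of the perturbation $\delta$ along the inactive edges ``only enters at second order,'' so that the cone is essentially the half-space $\sum_i(\delta_{i,n/2+1}-\delta_{i,n/2})\le 0$. This is false: since $x\mapsto|x|$ is not differentiable at $0$, an inactive edge $e$ contributes $|\Delta_e(\theta^*+c\delta)|-|\Delta_e\theta^*|=c\,|\Delta_e\delta|$, which is first order in $c$. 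The correct cone (Lemma~\ref{lem:tanconechar}) is $\{\delta:\sum_{e\in A^c}|\Delta_e\delta|\le-\sum_{e\in A}\mathrm{sgn}(\Delta_e\theta^*)\Delta_e\delta\}$, i.e.\ the entire total variation of $\delta$ off the central cut must be paid for by a decrease of the jump across the cut. This is not a technicality: if the cone really were a half-space, its Gaussian width would be of order $\sqrt N$, your argument would yield an MSE lower bound of constant order in $\sigma^2$, and that would contradict the upper bound of Theorem~\ref{thm:adap}. The entire content of the result is that this first-order TV constraint still leaves a direction of width $n^{1/4}=N^{1/8}$.

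Relatedly, your final construction does not close numerically: a cut with $n=\sqrt N$ entries cannot be split into $\sqrt N$ blocks each having block sum of magnitude $N^{1/4}$. The correct choice (Lemma~\ref{lem:gwlbd}) is $\sqrt n=N^{1/4}$ blocks of length $\sqrt n$, block sums of magnitude $n^{1/4}$, and entries of size $c_2/\sqrt n$ on the positive blocks, giving $\E\langle\nu,Z\rangle\gtrsim n^{1/4}$ and hence squared width $\sqrt n=N^{1/4}$. The step you defer---checking that this direction lies in the (correct) cone---is exactly where the work is: one pads the left half of $\nu$ with the constant $c_1/n$ so that the decrease across the cut is at least $c_1$, while the central column has at most $\sqrt n$ jumps of size $c_2/\sqrt n$ and thus $\TV$ at most $c_2$; taking $c_2\le c_1$ gives feasibility. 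Without the correct cone description this verification cannot even be stated, so as written the proposal is not a proof.
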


\subsubsection{Gaussian width bounds}
\label{sec:gauss_width_bnd_results}
Proving Theorem~\ref{thm:adap} and Theorem~\ref{thm:lbd} requires upper and lower bounds on the \textit{Gaussian width} of a certain family of matrices as we now explain. The Gaussian width of a set $K \subset \R^n$ is defined as \begin{equation}\label{def:gauss_width}\gw(K) = \E \sup_{\theta \in K} \langle Z, \theta \rangle\end{equation} where $Z = Z_n\sim N(0_n, I)$ and $\langle \cdot\,, \cdot \rangle$ is the usual Euclidean inner product between two vectors. We use $B_{m,n}(t)$ to denote the usual Euclidean ball of radius $t$ in $\R^{m 
	\times n}$. For any $A \subset \R^{n \times n}$ we denote the smallest cone containing $A$ by ${\rm Cone}(A)$ and the closure of $A$ by ${\rm Closure}(A)$. The \textit{tangent cone} $T(\theta^*) \subset \R^{n \times n}$ {\em at} $\theta^*$ with respect to the closed convex set $K^* \coloneqq \{\theta \in \R^{n \times n}: \TV_{\mathrm{norm}}(\theta) \le \mb V^*\}$ is defined as follows:
\begin{equation}\label{eq:deftancone}
T_{K^*}(\theta^*) \coloneqq {\rm Closure}(\,{\rm Cone}(\{\theta \in \R^{n \times n}: \theta^* + \theta \in K^*\})\,)\,.
\end{equation}
By definition, $T_{K^*}(\theta^*)$ is a closed convex cone. Informally, $T_{K^*}(\theta^*)$ represents all directions in which one can move infinitesimally from $\theta^*$ while still remaining in $K^*$. 
Roughly speaking, the problem of bounding the MSE from both directions is equivalent to bounding the square of $ \GW\big(T_{{K}^*}(\theta^*) \cap B_{n \times n}(1)\big)$ when $\theta^*$ is a piecewise constant matrix on rectangles. The precise connection of MSE to Gaussian widths is detailed in Section~\ref{Sec:adap} where the proofs of Theorem~\ref{thm:adap} and Theorem~\ref{thm:lbd} are also given. This connection prompts us to investigate how these tangent cones look like in the first place. \textit{The major technical contribution of this paper is to give upper and lower bounds on the Gaussian width of the tangent cone at a piecewise constant matrix which we encapsulate in the following two results.}


\begin{proposition}\label{prop:gwupbd}
	Let $\theta \in \R^{n \times n}$ be a given matrix and  $R_1,\dots,R_{k(\theta)}$ be its rectangular level sets which form a partition of the 2D grid $L_n.$ In addition, let us assume that the rectangles $R_i$ have bounded 
	aspect ratio, that is there exists a constant $c > 0$ such that $\max_{i \in [k]} A(R_i) \leq 
	c$. Let $K \coloneqq \{v \in \R^{n \times n}: \TV_{\mathrm{norm}}(v) \leq \TV_{\mathrm{norm}}(\theta)\}$ and $T_{K}(\theta)$ be the tangent cone at $\theta$ with respect to $K.$ Then there is a universal constant $C > 0$ such that 
	\begin{equation*}
	\GW(T_{K}(\theta) \cap B_{n,n}(1)) \leq C (\log n)^{{ 4.5}} k(\theta)^{5/8} n^{1/4}.
	\end{equation*}
\end{proposition}

\begin{proposition}\label{prop:gwlbd}
	Consider $\theta^*$ which is piecewise constant on two rectangles and is of the following form:
	\[
	\theta^{*} =
	\left[ {\begin{array}{cc}
		\mb 0_{n \times n/2} & \mb 1_{n \times n/2} \\
		\end{array} } \right]
	\] 
	Then, there exists a universal constant $c > 0$ such that we have the following lower bound:
	\begin{equation*}
	\GW(T_{{K}^*} (\theta^*) \cap B_{n \times n}(1)) \geq c n^{1/4}\,.
	\end{equation*}
\end{proposition}

The proofs of Proposition~\ref{prop:gwupbd} and Proposition~\ref{prop:gwlbd} are given in Sections~\ref{Sec:gwup} and~\ref{Sec:gwlb} respectively.

It should be mentioned here that bounding the Gaussian width of the tangent cone is a fundamental task in a different but 
related problem of signal recovery from a given number of measurements; 
see~\cite{ChandraFOCS} and~\cite{amelunxen2014living}. Matrix recovery using 2D Total 
Variation has been studied in the signal processing literature; see for 
instance~\cite{cai2015guarantees},~\cite{genzel2020ell1} and \cite{kabanava2014robust}. Our bounds on the Gaussian widths given in Proposition~\ref{prop:gwupbd}, Proposition~\ref{prop:gwlbd} and 
Theorem~\ref{thm:impos} (see below) appear to be new and are potentially of independent interest as stand alone results. Especially our use of {\em optimized} partitioning schemes (see Section~\ref{sec:onebdry} for details) in the proof of Proposition~\ref{prop:gwupbd} can be a useful strategy to attack other problems of similar flavor. See also Section~\ref{sec:guntu} for further discussion on the novelty of our proof.



\subsubsection{Impossibility of adaptation to non rectangular level sets}
Theorem~\ref{thm:adap} shows that the $O(N^{-3/4})$ rate is achievable when $\theta^*$ is piecewise constant on a few rectangles. A question arises here as to what rate is achievable when $\theta^*$ is piecewise constant but the level sets are not rectangular. The following theorem says that for a simple matrix $\theta^*$ whose level sets are triangular, the 
$\tilde{O}(N^{-1/2})$ rate cannot be improved. Below and in the rest of the paper we use $\mathbb I\{\cdot \in S\}$ to denote the indicator function for the set $S$ (often stated as a condition defining its elements), i.e., it takes the value 1 when its argument lies in the set $S$ and is $0$ otherwise.
\begin{theorem}\label{thm:impos}
	Consider the signal matrix $\theta^* \coloneqq \mathbb I\{i + j > n\}$. Then, there exists a universal constant $c > 0$ such that we have the following lower bound:
	\begin{equation*}
	\GW(T_{{K}^*} (\theta^*) \cap B_{n \times n}(1)) \geq c n^{1/2}\,.
	\end{equation*}
	Further, this implies a lower bound to the risk of the ideally constrained TVD estimator as follows:
	\begin{equation*}
	\lim_{\sigma \to 0}\frac{1}{\sigma^2}{\rm MSE}(\hat{\theta}_{\mb V^*},\theta^*) \geq  \frac{c} {N^{1/2}}\,.
	\end{equation*}
	Here $c > 0$ is a universal constant. 
\end{theorem}

\begin{remark}
	The proof of the above theorem should be extendable when $\theta^*$ is the indicator of a 
	circle or a regular $n$ sided ($n > 4$) polygon or any other shape which is sufficiently 
	non rectangular.  See Remark~\ref{rem:impos} for more on this issue. Therefore, it seems 
	that the rectangular shape of the level sets is crucial for the faster 
	$\tilde{O}(N^{-3/4})$ rate to hold. 
\end{remark}

\subsection{Tuning free TVD}
We now state our final result which relates to the question we posed about removing the tuning parameter and still retaining a risk bound which is essentially the same as in Theorem~\ref{Thm:1}. Choosing the tuning parameter is an important issue in applying the TVD methodology for denoising. The usual way out is to do some form of cross validation. There are some proposals available in the literature; see~\cite{solo1999selection},~\cite{osadebey2014optimal},~\cite{langer2017automated}. Soon after we uploaded our paper, a different tuning parameter free method appeared in \cite{ortelli2019oracle} which also achieves the optimal worst case $\tilde{O}(\mb V/\sqrt{N})$ rate of convergence. See Section~\ref{sec:ortelli_geer_a} for a comparison of our method with the one proposed in \cite{ortelli2019oracle}.

Our goal here is to construct a tuning parameter free estimator of $\theta^*$ which adapts to the true value of $\TV_{\mathrm{norm}}(\theta^*).$ The inspiration for this task comes from~\cite{chatterjee2015high} where the author gives a general recipe to construct tuning parameter free estimators in Gaussian mean estimation problems when the truth is known to have small value of some known norm. Even though the total variation functional is not a norm but a seminorm, the general idea in~\cite{chatterjee2015high} can be extended as we will show. However, the estimator of~\cite{chatterjee2015high} is a randomized estimator whereas in our case we construct a non randomized version. The following is a description of our tuning free estimator.

Let $\textbf{1}$ denote the $n \times n$ matrix consisting solely of ones. For any matrix $\theta \in \R^{n \times n},$ we define $\overline{\theta} \coloneqq \frac{1}{n^2} \sum_{i = 1}^{n} \sum_{j = 1}^{n} \theta[i, j]$ to be the mean of $\theta$. Define the estimator
\begin{equation}\label{estnew}
\hat{\theta}_{\mathrm{notuning}} \coloneqq \overline{y} \,\mb 1 \:\:\:\:\: + \:\:\:\:\: \argmin_{\{v \in \R^{n \times n}:\, \overline{v} = 0,\, \|y - \overline{y} \,\mb{1} - v\|^2 \leq (n^2 - 1) \hat{\sigma}^2\}} \TV_{\mathrm{norm}}(v) 
\end{equation}

where $\hat{\sigma}$ is an estimator of $\sigma$ defined as follows:
\begin{equation}\label{def:sigma}
\hat{\sigma} \coloneqq \frac{\TV_{\mathrm{norm}}(y)}{\E\:\TV_{\mathrm{norm}}(Z)} = \frac{\sqrt{\pi}\:\TV_{\mathrm{norm}}(y)}{4\:(n - 1)}\,.
\end{equation}

The intuition behind the estimator defined above is as follows. The estimation of $\theta^*$ is done by estimating the two orthogonal parts $\overline{\theta^*}\:\textbf{1}$ and 
$\theta^* - \overline{\theta^*} \textbf{1}$ separately. The first part is 
estimated by $\overline{y}\:\textbf{1}$. To estimate $\theta^* - \overline{\theta^*} \textbf{1},$ we use a Dantzig Selector type (see~\cite{candes2007dantzig}) version of the TVD estimator, which computes a zero mean matrix with the least total variation subject to being within a Euclidean ball of a suitable radius around the centered data matrix $y - 
\overline{y} \,\mb 1$. A good choice of this radius actually depends on the true $\sigma$ and hence as an intermediate step, we have to estimate $\sigma$ in the process which is denoted by $\hat{\sigma}.$ The main idea behind our construction of $\hat{\sigma}$ here is the fact that $\TV_{\mathrm{norm}}(\theta^*)$ is small compared to $\TV_{\mathrm{norm}}(Z)$ and hence $\TV_{\mathrm{norm}}(y) = \TV_{\mathrm{norm}}(\theta^* + \sigma Z)$ approximately equals $\sigma \TV_{\mathrm{norm}}(Z).$ We can then use concentration properties of the $\TV_{\mathrm{norm}}(Z)$ statistic to show that $\tfrac{\TV_{\mathrm{norm}}(Z)}{\E \TV_{\mathrm{norm}}(Z)}$ is approximately equal to $1.$ The following theorem supplies a risk bound for $\hat{\theta}_{\mathrm{notuning}}.$

\begin{theorem}\label{Thm:notuning}
	We have the following risk bound for our tuning free estimator:
	\begin{equation*}
	{\rm MSE}(\hat{\theta}_{\mathrm{notuning}},\theta^*) \leq C \big(\sigma \frac{\mb V^*}{\sqrt{N}} \log (\e n)^{5/2} + \frac{(\mb V^*)^2}{N} + \frac{\sigma^2}{\sqrt{N}}\big)\,
	\end{equation*}
	where $C$ is a universal constant. 
\end{theorem}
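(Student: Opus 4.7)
The plan is to mimic the Dantzig-selector style analysis of~\cite{chatterjee2015high}, adapted to the seminorm $\TV_{\mathrm{norm}}$ and to the fact that the radius of the Euclidean constraint is itself data-dependent through $\hat{\sigma}$. First, writing $v^* := \theta^* - \overline{\theta^*}\mathbf{1}$ and $\tilde{y} := y - \overline{y}\mathbf{1}$, I would exploit the orthogonality of the mean and centered components to decompose
\begin{equation*}
\|\hat{\theta}_{\mathrm{notuning}} - \theta^*\|^2 = \|\hat{v} - v^*\|^2 + n^2(\overline{y} - \overline{\theta^*})^2.
\end{equation*}
The second summand has expectation $\sigma^2$, contributing the $\sigma^2/N$ term, and all subsequent work is devoted to controlling $\E\|\hat{v} - v^*\|^2$.

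Next, I would pin down $\hat{\sigma}$. Because $D\theta \mapsto |D\theta|_1$ is equivalent, up to constants, to an $\ell_2$-norm on a $2n(n-1)$-dimensional ambient space, $\TV_{\mathrm{norm}}$ is Lipschitz in the Frobenius norm with an absolute Lipschitz constant, so Gaussian concentration gives $\TV_{\mathrm{norm}}(Z) = \E\TV_{\mathrm{norm}}(Z)(1 + O(\sqrt{\log n}/n))$ with probability $1 - n^{-c}$. Combined with the triangle inequality $|\TV_{\mathrm{norm}}(y) - \sigma\TV_{\mathrm{norm}}(Z)| \leq \mb V^*$, this yields $|\hat{\sigma} - \sigma| \leq C\sigma\sqrt{\log n}/n + C\mb V^*/n$ on the same event, which is exactly the content needed to make $\hat{\sigma}$ an acceptable proxy for $\sigma$ in the constraint radius.

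For the main bound I would start from the identity
\begin{equation*}
\|\hat{v} - v^*\|^2 \;=\; \|\hat{v} - \tilde{y}\|^2 \;-\; \|\tilde{y} - v^*\|^2 \;+\; 2\sigma\langle \hat{v} - v^*,\, Z - \overline{Z}\mathbf{1}\rangle.
\end{equation*}
The feasibility of $\hat{v}$ yields $\|\hat{v} - \tilde{y}\|^2 \leq (n^2 - 1)\hat{\sigma}^2$, while $\|\tilde{y} - v^*\|^2 = \sigma^2\|Z - \overline{Z}\mathbf{1}\|^2$ is a $\sigma^2\chi^2_{n^2-1}$ random variable. Combining this chi-square concentration with the control on $\hat{\sigma}$ bounds the deterministic part $\|\hat{v} - \tilde{y}\|^2 - \|\tilde{y} - v^*\|^2$ by $O(n\sigma\mb V^* + (\mb V^*)^2 + n\sigma^2\sqrt{\log n})$, producing after division by $N$ precisely the $\sigma\mb V^*/\sqrt{N}$, $(\mb V^*)^2/N$ and (at worst) $\sigma^2/\sqrt{N}$ contributions in the theorem. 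For the stochastic cross-term I use the crucial consequence of TV minimization: on the event that $v^*$ is feasible (i.e.\ $\|\tilde{y} - v^*\|^2 \leq (n^2-1)\hat{\sigma}^2$), the definition of $\hat{v}$ forces $\TV_{\mathrm{norm}}(\hat{v}) \leq \TV_{\mathrm{norm}}(v^*) = \mb V^*$, so $\hat{v} - v^*$ lies in the set of mean-zero matrices of total variation at most $2\mb V^*$. A peeling argument over the intersection of this TV ball with Euclidean balls of varying radii, reusing the Gaussian-width estimates already built for Theorem~\ref{Thm:1}, then bounds the cross-term by the leading $\sigma\mb V^*(\log n)\log(2 + 2\mb V^* n)/\sqrt{N}$ piece.

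The main obstacle is that the feasibility of $v^*$ is not automatic: both $\hat{\sigma}$ and $\|Z - \overline{Z}\mathbf{1}\|/\sqrt{n^2-1}$ concentrate at the scale $\sigma$ with fluctuations of order $\sigma/n$, so infeasibility occurs with non-negligible probability and spoils the cleanest form of the Dantzig argument. I will handle this by splitting the expectation according to whether $v^*$ is feasible: on the feasible event the argument above runs as described, while on its complement the sign of $\|\hat{v} - \tilde{y}\|^2 - \|\tilde{y} - v^*\|^2$ flips favorably (it is negative, since $\hat{v}$ is inside the ball and $v^*$ is outside), and I will combine this negative slack with a worst-case bound in terms of the constraint radius and a sharp tail estimate on the joint concentration of $\TV_{\mathrm{norm}}(Z)$ and $\|Z - \overline{Z}\mathbf{1}\|^2$ to absorb the contribution of the bad event into the $\sigma/\sqrt{N}$ and $\sigma^2/N$ residuals.
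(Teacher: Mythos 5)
Your architecture shares most of its ingredients with the paper's proof: the orthogonal split into mean and centered parts, the concentration of $\hat\sigma$ around $\sigma$ (the paper uses a second-moment bound on $\TV(Z)$ via Lemma~\ref{lem:TVdiff} rather than Gaussian--Lipschitz concentration, but either works), the $\chi^2_{n^2-1}$ fluctuation of $\|\tilde y - v^*\|^2$, and the Gaussian width of the zero-mean TV ball of radius $2V^*$ from Lemma~\ref{lem:gwTVmean0}. The structural difference lies in how you control $\hat v$ itself: you run a basic-inequality/cross-term argument directly on $\hat v$, which requires $\TV(\hat v)\le V^*$ and hence the feasibility of $v^*$.

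The gap is in your treatment of the infeasible event, and I do not see how to close it along the lines you sketch. That event has probability bounded away from $0$ (both $(n^2-1)\hat\sigma^2$ and $\|\tilde y - v^*\|^2$ concentrate around $(n^2-1)\sigma^2$ with fluctuations of the same order, so either can exceed the other), so you cannot discard it by a tail bound; you must genuinely bound the cross term $2\sigma\langle\hat v - v^*,\, Z-\overline Z\mathbf 1\rangle$ on it. Without a TV bound on $\hat v$, the only ``worst-case bound in terms of the constraint radius'' is Cauchy--Schwarz, which on a typical realization gives $2\sigma\|\hat v - v^*\|\,\|Z-\overline Z\mathbf 1\| = O(n^2\sigma^2)$; the negative slack $\|\hat v-\tilde y\|^2 - \|\tilde y - v^*\|^2$ that you propose to absorb it with is only of the fluctuation order $O(n\sigma^2 + \sigma V^*)$, smaller by a factor of $n$. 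Alternatively, one can show $\TV(\hat v) \lesssim V^* + n\sigma$ on the bad event by projecting $v^*$ onto the constraint ball, but feeding $V^* + n\sigma$ into the Gaussian-width bound produces an extra term of order $\sigma^2(\log n)^2/\sqrt N$ in the MSE, which is not within the claimed bound. The paper sidesteps the dichotomy altogether: it observes that $\hat w$ (your $\hat v$), when nonzero, attains the Euclidean constraint with equality and coincides with the projection of $w$ onto $K_n^0(\hat V)$ with $\hat V=\TV(\hat w)$; the elementary projection inequality of Lemma~\ref{lem:projfact}, applied to the nested TV balls $K_n^0(\hat V)$ and $K_n^0(V^*)$, then yields $\|\hat w - \hat w_{V^*}\|^2 \le \big|(n^2-1)\hat\sigma^2 - \|w-\hat w_{V^*}\|^2\big|$ \emph{symmetrically in the two cases $\hat V\gtrless V^*$} (Lemma~\ref{lemma:ptwisefact}), so the Gaussian-width machinery is only ever applied to the projection $\hat w_{V^*}$, which lies in $K_n^0(V^*)$ by construction. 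Replacing your feasible/infeasible dichotomy with this projection comparison is the missing idea.
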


\begin{remark}
	Note that the above bound is meaningful only when $\lim_{N 
		\to \infty}\tfrac{\mb V^*}{\sqrt{N}} = 0$. Therefore in this regime, $\tfrac{(\mb V^*)^2}{N}$ is a lower order term. Thus, Theorem~\ref{Thm:notuning} basically says that the MSE of $\hat{\theta}_{\mathrm{notuning}}$, up to multiplicative log factors and an additive factor $\tfrac{\sigma}{\sqrt{N}}$, scales like $\tfrac{\mb V^*}{\sqrt{N}}$. In light of Remark~\ref{rem1} we can say that $\hat{\theta}_{\mathrm{notuning}}$ is minimax rate optimal (up to log factors) over $\{\theta \in \R^{n \times n}: \TV_{\mathrm{norm}}(\theta) \leq \mb V\}$, simultaneously for any sequence of $\textbf{V}$ (depending on $n$) which is bounded below by a 
	constant and above by $\sqrt{N}$. 
	To the best of our knowledge, this is the first result demonstrating such an estimator which is completely tuning free. 
\end{remark}

\section{Comparison with existing results, simulation studies and discussions}\label{sec:discuss}
To place our theorems in context, it is worthwhile to compare and relate our results with a couple of recent papers. We also discuss some issues related to our results.

\subsection{Comparison with \cite{hutter2016optimal}}
Let us compare our risk bound in Theorem~\ref{thm:adap} to the adaptive risk bound (Theorem~\ref{thm:hrig}) of~\cite{hutter2016optimal} when the truth $\theta^*$ is piecewise constant on a few axis aligned rectangles. Both of these theorems prove statements about tuned TVD estimators. Considering the very simple case when $\theta^*$ is of the following form:
\[
\theta^{*} =
\left[ {\begin{array}{cc}
	\mb 0_{n \times n/2} & \mb 1_{n \times n/2} \\
	\end{array} } \right]
\]
we have already mentioned in Section~\ref{sec:intro} that $\|D \theta^*\|_0 = O(\sqrt{N}).$ 
Thus, Theorem~\ref{thm:hrig} gives us an upper bound on the MSE scaling like 
$\tilde{O}(1/\sqrt{N})$ whereas our Theorem~\ref{thm:adap} gives a faster rate of 
convergence scaling like $\tilde{O}(1/N^{3/4}).$ More generally, if $\theta^*$ is piecewise 
constant on $k$ axis aligned rectangles with bounded aspect ratio and roughly equal size, it 
can be checked that $\|D \theta^*\|_0 \approx \sqrt{k^* N}.$ This means that 
Theorem~\ref{thm:hrig} gives us an upper bound on the MSE scaling like 
$\tilde{O}(\sqrt{k^*/N}).$ Compare this to Theorem~\ref{thm:adap} which gives a rate of 
convergence scaling like $\tilde{O}((k^*)^{5/4}/N^{3/4}).$ Thus, in the small $k^*$ regime 
when $k^* < N^{1/3}$, Theorem~\ref{thm:adap} provides a faster rate of convergence. 
This is the main contribution of this paper and to the best of our knowledge is the first of its kind in the literature.

\subsection{Comparison with~\cite{guntuboyina2017adaptive}} \label{sec:guntu}
As mentioned in Section~\ref{sec:intro}, one of our motivating factors behind investigating adaptivity of the 2D TVD estimator was its success in optimally estimating piecewise constant vectors in the 1D setting. Theorem~$2.2$ in~\cite{guntuboyina2017adaptive} gives a 
$\tilde{O}(k^*/n)$ rate for the ideally tuned constrained 1D TVD estimator when the truth 
$\theta^*$ is piecewise constant with $k^*$ pieces or blocks and each block satisfies a 
certain minimum length condition. In a sense, our Theorem~\ref{thm:adap} is a natural 
successor, giving the corresponding result in the 2D setting. Our bounded aspect ratio 
condition is the 2D version of the minimum length condition. A consequence of 
Theorem~\ref{thm:adap} and Theorem~\ref{thm:lbd} is that, in contrast to the 1D setting, the 
ideally tuned constrained TVD estimator can no longer obtain the oracle rate of convergence 
$\tilde{O}(k^*/n)$ in the 2D setting.

The proof of Theorem $2.2$ in~\cite{guntuboyina2017adaptive} was done by bounding the Gaussian widths of certain tangent cones. Our proof of Theorem~\ref{thm:adap} also adopts 
the same strategy and precisely characterizes the tangent cone $T_{K( V^*)}(\theta^*)$ 
(defined in~\eqref{eq:deftancone}) for piecewise constant $\theta^*$ and then bounds its 
Gaussian width. The main idea in~\cite{guntuboyina2017adaptive} was to observe that any unit 
norm element of the tangent cone is nearly made up of two monotonic blocks in each constant 
block of $\theta^*.$ Then the available metric entropy bounds for monotone vectors were used 
to bound the Gaussian width. A crucial ingredient in this proof is the well-known fact that any univariate function of bounded variation has a canonical representation as a difference of two monotonic functions. However, it is not clear at all how to adapt 
such a strategy to the 2D setting. In particular, it is not nearly as natural and convenient to express a matrix of bounded variation as a difference of two 
bi-monotone matrices. Our computation of Gaussian width of the tangent cone is therefore essentially \textit{two dimensional} and involves judicious recursive partitioning 
in both dimensions. We believe that our Gaussian width computations, especially the proof of Proposition~\ref{prop:gwonebdry}, consist of new techniques and are potentially useful for problems of similar flavor.

\subsection{Comparison with~\cite{ortelli2019oracle}}\label{sec:ortelli_geer_a}
At the latter stages of preparation of this manuscript we became aware of an independent work by~\cite{ortelli2019oracle} which is related to our manuscript. In~\cite{ortelli2019oracle}, the authors give a general technique to derive slow ($\ell_1$) and fast ($\ell_0$) rates for penalized TVD estimators and its square root version on general graphs. Thus, there seems to be two routes for obtaining fast rates for TVD. One goes through the route of bounding Gaussian width of an appropriate tangent cone to derive fast rates for the constrained TVD estimator; as done here in this manuscript as well as in~\cite{guntuboyina2017adaptive}. The other route; followed by~\cite{hutter2016optimal} and generalized by~\cite{ortelli2019oracle} is based on bounding the so called \textit{compatibility factor}.~\cite{ortelli2019oracle} show how to bound this compatibility factor for specific graphs such as the $1d$ grid graph and the $1d $ cycle graph. To the best of our knowledge, bounding the compatibility factor for piecewise constant functions on axis aligned rectangles for a $2d$ grid remains an open problem. Thus, as far as we are aware, the work in this manuscript proving fast rates of convergence on $2d$ grid graph for piecewise constant functions on axis aligned rectangles is the first of its type in the literature.

The work in~\cite{ortelli2019oracle} also proposes a general technique to obtain slow rates for a square root version of the TVD estimator. Similar to our paper, \cite{ortelli2019oracle} also considers the case when the noise variables are i.i.d. Gaussian. The advantage of using this square root version is that the tuning parameter does not need to depend on the unknown parameter $\sigma$. While the theoretically recommended choice of the tuning parameter $\lambda$ in \cite[Corollary~4.13]{ortelli2019oracle} does not depend on the noise variance $\sigma^2$, there is however the presence of an unspecified 
large universal constant $C$. It is not clear to us whether this $C$ can be explicitly 
specified. On the other hand, our tuning free estimator is 
explicitly specified and involves no unknown constants. We think the analysis of our tuning 
free estimator is also reasonably clean with the sources of the various possible errors made transparent in the proof. This is why we believe that our tuning free estimator provides a theoretically valid and possibly useful alternative to the square root regularization approach. Just to be clear, we are not claiming any optimality of our tuning free method, our intention is to demonstrate one theoretically valid way to obtain a minimax rate optimal tuning free estimator.

\subsection{Necessity of bounded aspect ratio condition in theorem~\ref{thm:adap}}\label{sec:bddasp}
We think a bounded aspect ratio condition would actually be necessary for the $O(N^{-3/4})$ rate to hold in Theorem~\ref{thm:adap}. For instance, consider the sequence of matrices $\theta^*$ such that $\theta^*[i,j] = \mathbb{I}\{j = n\}.$ Clearly, the rectangular level sets of the sequence of matrices $\theta^*$ do not satisfy the bounded aspect ratio condition. By an argument similar to the one used to prove our lower bound results in Theorem~\ref{thm:lbd} and Theorem~\ref{thm:impos}, one can show that the ${\rm MSE}(\hat{\theta}_{\mb V^*},\theta^*) \geq c N^{-1/2}.$ We have also verified this scaling of the MSE in our numerical experiments. 

The bounded aspect ratio condition says that the rectangular level sets of $\theta^*$ should not be too skinny or too long. In our proof, the bounded aspect ratio is needed for similar reasons as a minimum length condition is needed for the length of the pieces in the 1D setting; see~\cite{guntuboyina2017adaptive},~\cite{dalalyan2017tvd}.

\subsection{On obtaining the oracle rate for piecewise constant signals}\label{sec:pc}
In light of Theorem~\ref{thm:adap} and Theorem~\ref{thm:lbd} we can say the following statement. When the truth $\theta^*$ is piecewise constant on $k^*$ axis aligned rectangles, the TVD estimator \textit{cannot} attain the oracle rate of convergence scaling like $O(k^*/N).$ The question that now arises is whether there exists \textit{any} estimator which attains the $\tilde{O}(k^*/N)$ rate of convergence for all piecewise constant truths {\em as well as} the minimax rate $\tilde{O}{(\mb V^* / \sqrt{N})}$? Furthermore, can this estimator be chosen so that it is computationally efficient? These questions led us to examine decision tree estimators which are different from the TVD type estimators. We would like to point out here that in the paper~\cite{chatterjeegoswamiadaptive} we have been able to demonstrate computationally efficient estimators which attain both the aforementioned goals.

Apart from~\cite{chatterjeegoswamiadaptive}, some other recent papers have also sprung up which target piecewise rectangular signals. The papers \cite{ortelli2019tvd} and \cite{fang2019multivariate} study a different version of the TVD estimator which is also 
termed as the {\em Hardy Krause estimator}. As far as we understand, this estimator is well 
suited for estimating piecewise rectangular signals as it actually fits piecewise 
rectangular estimates. For a general signal with $k^*$ rectangular pieces (with some regularity conditions on the rectangular pieces), the rate proved by 
\cite{ortelli2019tvd} is $\tilde O(({k^*})^{3/2}/N)$ which is better than $\tilde{O}\big(({k^*})^{5/4}/N^{3/4}\big)$. Notice that the $\tilde O(({k^*})^{3/2}/N)$ rate still does not match the near oracle rate $\tilde{O}({k^*}/N)$ which has been obtained in~\cite{chatterjeegoswamiadaptive}.

\subsection{Constrained vs Penalized}
In this paper, we have focussed on the constrained version of the 2D TVD estimator. As mentioned in the introduction, the penalized version is also quite popular. In the low $\sigma$ limit, it can be proved that the constrained estimator $\hat{\theta}_{\mb V}$ with $\mb V = \mb V^* = \TV_{\mathrm{norm}}(\theta^*)$ is better than the penalized estimator for every deterministic choice of $\lambda.$ More precisely, we have for all $\lambda \geq 0$,
\begin{equation*}
\lim_{\sigma \downarrow 0} \frac{1}{\sigma^2}\:{\rm MSE}(\hat{\theta}_{\mb V^*},\theta^*) < \lim_{\sigma \downarrow 0} \frac{1}{\sigma^2}\:{\rm MSE}(\hat{\theta}_{\lambda},\theta^*).
\end{equation*}
The above inequality follows from the results of~\cite{oymak2013sharp} as described in Section 5.2 in~\cite{guntuboyina2017adaptive}. Since our main question here is whether faster/adaptive rates are possible for piecewise constant matrices, it is therefore natural to first study the constrained version with ideal tuning. A possible next step is to investigate whether a similar $N^{-3/4}$ rate is atttained by the penalized TVD estimator and if so, for what range of the tuning parameter $\lambda$.


\subsection{Simulation studies}
We consider three distinct sequences of matrices to 
facilitate comparison. We consider the simplest piecewise constant matrix $\theta^{\mathrm{two}} \in \R^{n \times n}$ where $\theta^{\mathrm{two}} \coloneqq \I\{j >  n/2\}.$ Hence 
$\theta^{\mathrm{two}}$ just takes two distinct values. The next matrix $\theta^{\mathrm{four}}$ is a block matrix with four constant blocks.
\[
\theta^{\mathrm{four}}\coloneqq
\left[ {\begin{array}{cc}
	1_{n/2 \times n/2} & 2_{n/2 \times n/2} \\
	0_{n/2 \times n/2} & 1_{n/2 \times n/2} \\
	\end{array} } \right]
\]
Finally, we also consider a $n \times n$ matrix $\theta^{\mathrm{worst}} \coloneqq 
\I\{i + j > n\}$. Clearly, $\theta^{\mathrm{worst}}$ does not have a block constant structure. 
For the matrix $\theta^{\mathrm{worst}}$ we incur the worst case rate $\tilde{O}(N^{-1/2})$ as 
shown in Theorem~\ref{thm:impos}; hence the name. The noise variance has been set to 1 for all the numerical experiments reported in this section.

The dependence of the MSE with $N = n^2$ can be experimentally checked as follows. We can estimate the MSE for a fixed $n$ by Monte Carlo repetitions and then iterate this for a grid of $n$ values. We then plot log of the estimated MSE with log $N$ and fit a least squares line to the plot. The slope of the least squares line then gives an indication of the correct exponent of $N$ in the MSE. Figure~\ref{fig:fig1} is such a plot for the ideally tuned constrained TVD estimator.
\begin{figure}
	\begin{center}
		\includegraphics[scale = 0.25]{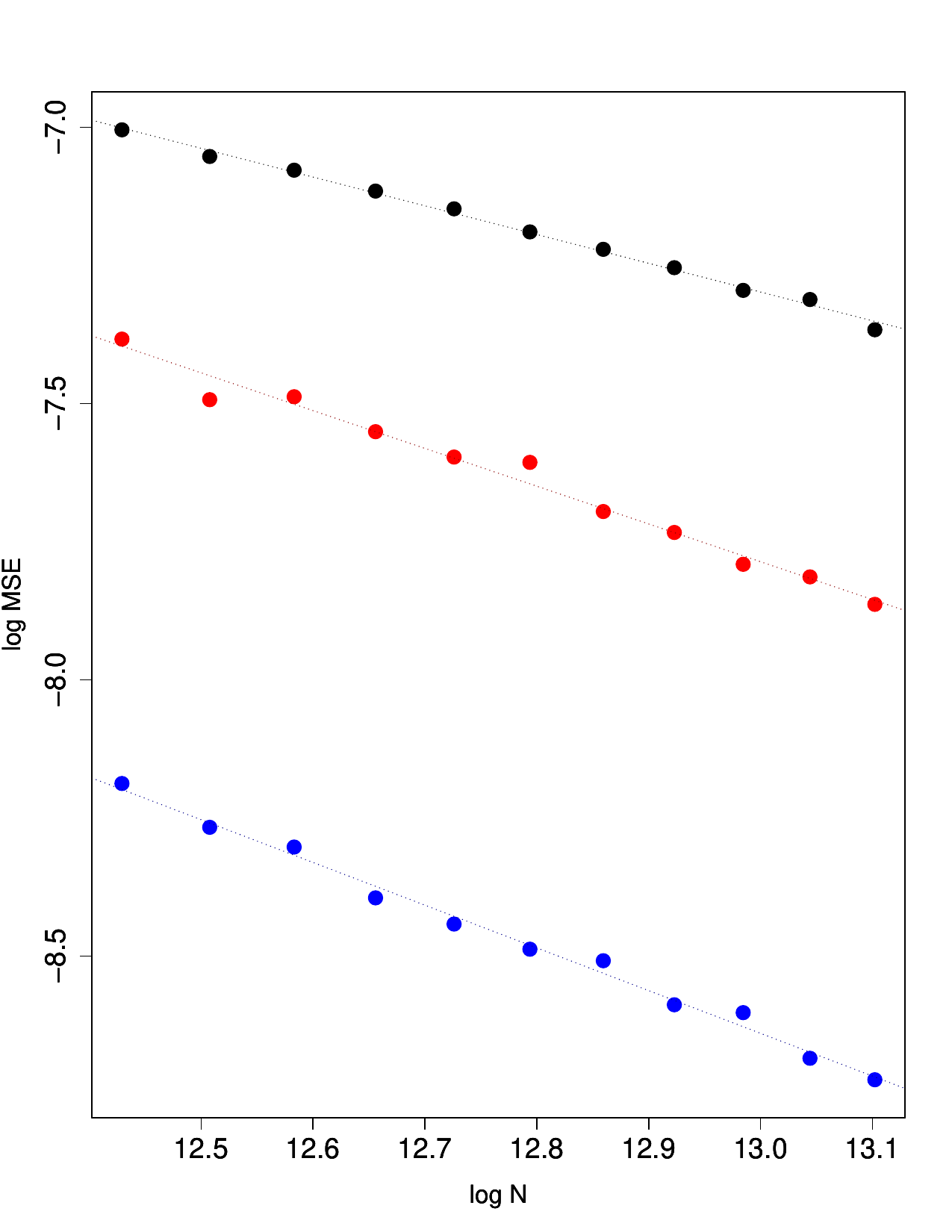}
		\caption{\textit{The MSE of the ideally tuned TVD estimator $\hat{\theta}_{\mb V^*}$ is estimated with $50$ Monte carlo repetitions for a grid of $n = \sqrt{N}$ ranging from $500$ to $700$ in increments of $20.$ The true matrices were taken to be $\theta^{\mathrm{two}}$ ({\color{blue} blue}), $\theta^{\mathrm{four}}$ ({\color{red} red}) and $\theta^{\mathrm{worst}}$ ({\color{black} black}). In each case, we have chosen the ideal tuning parameter to allow fair comparison. We plot log of estimated MSE versus log $N$ where log is taken in base $\e.$ The points are the estimated log MSE and the dotted lines are the least squares line fitted to the points. The least squares slope for $\theta^{\mathrm{two}}$ is $-0.73$ and for $\theta^{\mathrm{four}}$ is $- 0.68$ which is considerably lower than the slope for the matrix $\theta^{\mathrm{worst}}$ which is $-0.52.$}}
		\label{fig:fig1}
	\end{center} 
\end{figure}

\begin{figure}
	\begin{center}
		\includegraphics[scale = 0.25]{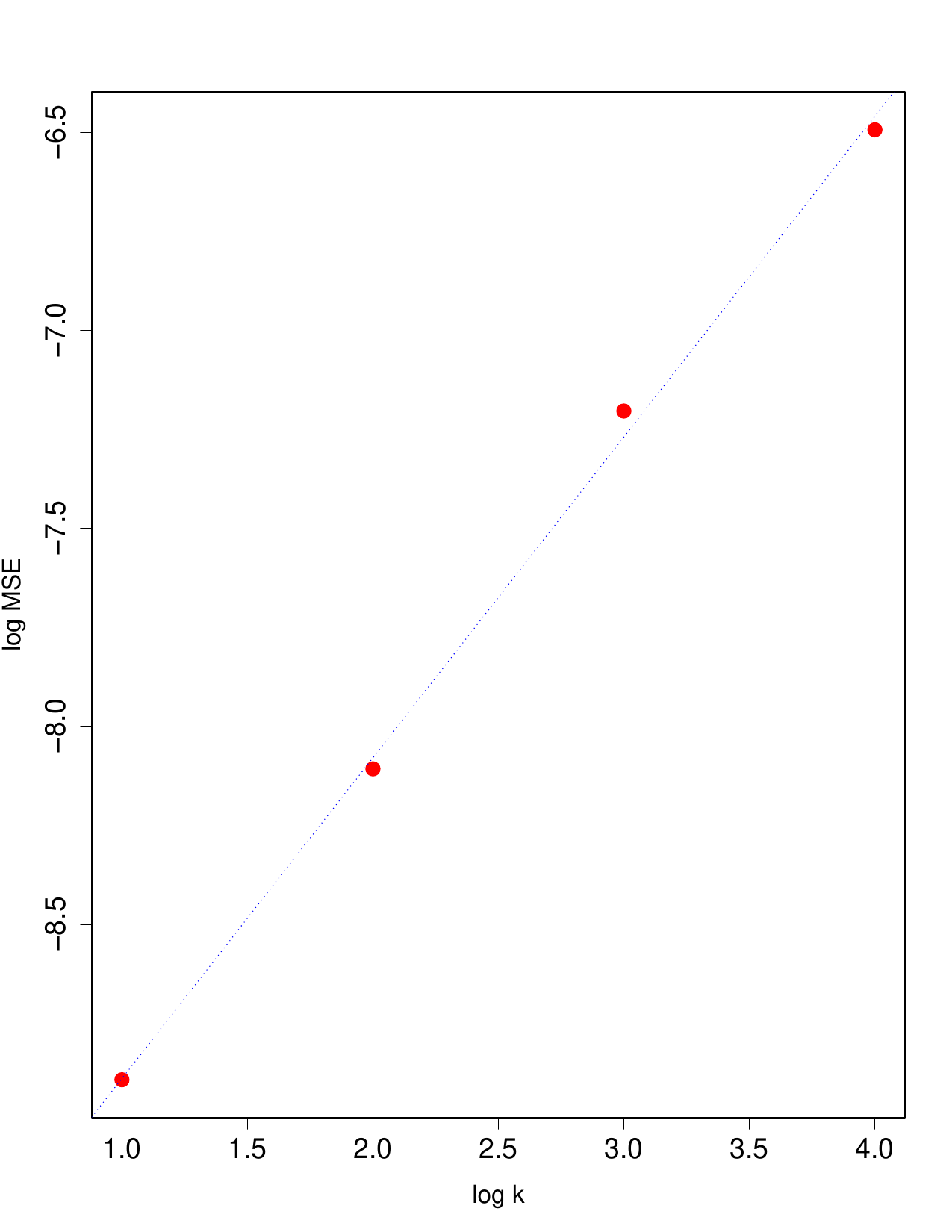}
		\caption{\textit{The MSE of the ideally tuned TVD estimator is estimated with $50$ Monte carlo repetitions when $n = 800.$  The true matrices were taken to be such that the number of rectangular level sets is $2,4,8,16.$ In each case, we have chosen the ideal tuning parameter to allow fair comparison. We have also normalized the matrices so that $\mb V^* = 1$. We plot log of estimated MSE versus $\log_2 k$ where $k = 2,4,8,16.$ The points are the estimated log MSE and the dotted lines are the least squares line fitted to the points. The least squares slope is $0.81.$}}
			\label{fig:fig2}
	\end{center} 
\end{figure}

In Figure~\ref{fig:fig1}, the risk is seen to be minimum for $\theta^{\mathrm{two}}$ followed by $\theta^{\mathrm{four}}$ and then $\theta^{\mathrm{worst}}.$ The slope for $\theta^{two}$ and $\theta^{\mathrm{four}}$ came out to be $-0.73$ and $-0.68.$ This agrees well with Theorem~\ref{thm:adap} and Theorem~\ref{thm:lbd} which says that the MSE decays at the rate $n^{-0.75}$ upto log factors. 
For the matrix $\theta^{\mathrm{worst}}$ the slope turned out be $-0.52$ which is in agreement with the worst case $\tilde{O}(N^{-1/2})$ rate given in Theorem~\ref{thm:impos}.

To investigate the dependence of MSE with the number of rectangular level sets $k(\theta^*)$, we took four matrices. The first two are $\theta^{\mathrm{two}},\theta^{\mathrm{four}}$ and the last two are obtained by further binary division so that the number of rectangular level sets is $8,16$ respectively. We normalized the matrices such that $\mb V^* = 1.$ We fixed $n = 800$ and did $50$ iterations of Monte Carlo simulations for each of the four matrices. We then plotted $\log {\rm MSE}$ versus $\log_2 k$ (see Figure~\ref{fig:fig2}) where $k = 2,4,8,16.$ The slope of the least squares line we got is $0.81.$ This suggests that our exponent of $k$ ($= 1.25)$ in the risk bound in Theorem~\ref{thm:adap} may not be optimal.

\begin{figure}
	\begin{center}
		\includegraphics[scale=0.25]{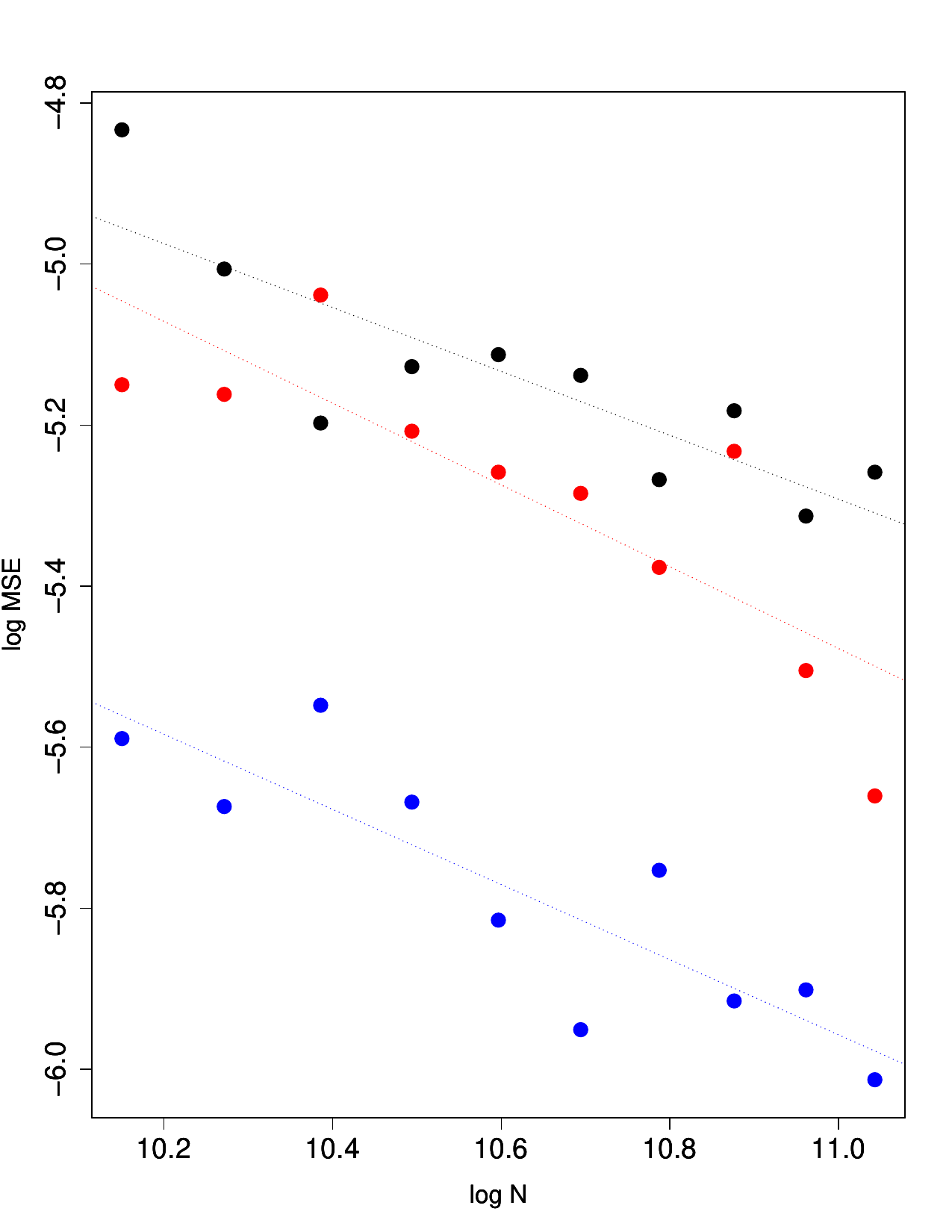}
		\caption{\textit{The MSE of our tuning free estimator is estimated with $50$ Monte carlo repetitions for a grid of $n = \sqrt{N}$ ranging from $160$ to $250$ in increments of $10.$ The true matrices were taken to be $\theta^{\mathrm{two}}$ ({\color{blue} blue}), $\theta^{\mathrm{four}}$ ({\color{red} red}), and $\theta^{\mathrm{worst}}$ ({\color{black} black}). We plot log of estimated MSE versus log $N$ where log is taken in base $e.$ The  circular points are the estimated log MSE and the dotted lines are the least squares line fitted to the points. The slopes of the least squares lines are $-0.47$,$-0.51$,$-0.40$ for $\theta^{\mathrm{two}}$, $\theta^{\mathrm{four}}$, $\theta^{\mathrm{worst}}$ respectively.}}
		\label{fig:fig3}
	\end{center}
\end{figure}
To assess the risk of our fully data driven estimator $\hat{\theta}_{\mathrm{notuning}}$, we again consider the three matrices $\theta^{\mathrm{two}},\theta^{\mathrm{four}}$ and $\theta^{\mathrm{worst}}$ respectively. Figure~\ref{fig:fig3} is a plot of log MSE versus $\log n.$

The simulations in Figure~\ref{fig:fig3} suggest that our estimator has MSE decaying at a $O(1/\sqrt{N})$ 
rate for all three matrices. The slope of all three least 
squares lines are reasonably close to $-0.5.$ This matches 
the rate given in Theorem~\ref{Thm:notuning}. 
 However, our tuning free estimator does not seem to be adaptive to piecewise constant structure like the constrained TVD estimator with 
ideal tuning. 

\begin{figure}
	\begin{center}
		\includegraphics[scale = 0.25]{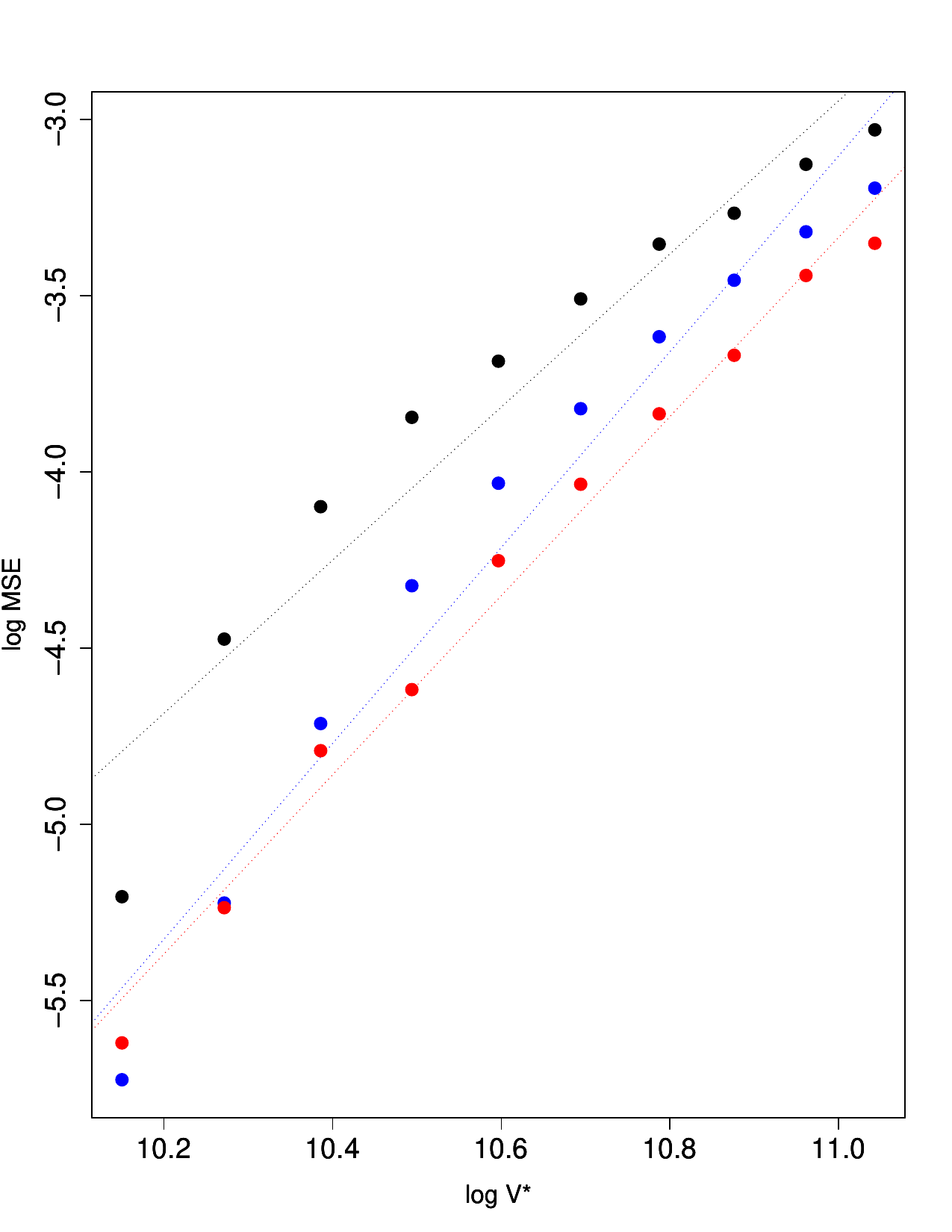}
		\caption{\textit{The MSE of the tuning free TVD estimator is estimated with $50$ Monte carlo repetitions for a grid of $\mb V^* \in [10]$ and $n = 200.$ The true matrices were taken to be $\theta^{\mathrm{two}}$ ({\color{blue} blue}), $\theta^{\mathrm{four}}$ ({\color{red} red}) and $\theta^{\mathrm{worst}}$ ({\color{black} black}) properly normalized. We plot log of estimated MSE versus log $N$ where log is taken in base $\e.$ The points are the estimated log MSE and the dotted lines are the least squares line fitted to the points. The least squares slope for $\theta^{\mathrm{two}}$ is $1.16$, for $\theta^{\mathrm{four}}$ is $1.07$ and for the matrix $\theta^{\mathrm{worst}}$ it is $0.94.$}}
		\label{fig:fig4}
	\end{center} 
\end{figure}
To investigate the dependence of the risk of our tuning free estimator on $\mb V^*$, for each of the three matrices $\theta^{\mathrm{two}}$, $\theta^{\mathrm{four}}$, $\theta^{\mathrm{worst}}$, we normalized the matrix such that $\mb V^* = 1,2,\dots,10$. We fixed $n = 200$ and did $50$ iterations for each $\mb V^*$ and each matrix. We then plotted log MSE versus log $\mb V^*$ (see Figure~\ref{fig:fig4}) and fitted a least squares line. The slopes for each of these three matrices came out to be $1.16,1.07,0.94$ respectively. This suggests that the right exponent of $V^*$ is $1$ and our risk bound has the right dependence on $\mb V^*.$ 

\section{A generic approach towards bounding Gaussian widths}
\label{sec:chaining}
Let us recall from Section~\ref{sec:gauss_width_bnd_results} that the Gaussian width of a set $K \subset \R^n$ is defined as $$\gw(K) = \E \sup_{\theta \in K} \langle Z, \theta \rangle$$ where $Z = Z_n\sim N(0_n, I)$ and $\langle \cdot\,, \cdot \rangle$ is the usual Euclidean inner product between two vectors. Our principal result in this section provides an upper bound on $\gw(K)$ in terms of the numbers and dimensions of its covering (linear) subspaces. This result executes and adapts the idea of chaining (see, e.g., \cite[Theorem~$5.24$]{van2014probability}) to the case when the covering sets are linear subspaces of $\R^n$. To this end let us define, for any $\ep > 0$, an $\epsilon$ {\em subspace cover} of $K$ to be any finite collection $\mathcal{S}$ of linear subspaces  of $\R^n$ such that 
$$\sup_{\theta \in K}\:{\rm dist}\Big(\theta, \: \bigcup_{S \in \mathcal S}S\Big) \leq \epsilon$$  where ${\rm dist}(A, B)$ denotes the Euclidean distance between the sets $A$ and $B$. We denote by $\diam(K)$ the diameter of $K$ which we assume to be finite. Also for any $t > 0$, we denote by $B_n(t)$ the $t$-Euclidean ball $\{\theta \in \R^n: \|\theta\| \leq t\}$ where $\|.\|$ is the Euclidean norm. We will often drop the subscript $n$ and just write $B(t)$ when the dimension is clear from the context.
\begin{proposition}[Gaussian width bound]\label{prop:chaining}
	For every $\epsilon \in (0, \diam(K))$, let $\mathcal{S}_{\epsilon}$ be an $\epsilon$ subspace cover of $K$. Also let $k_1 > k_0$ be integers with $k_0$ being the smallest integer satisfying $2^{-k_0} \geq \diam(K)$. Then we have
	\begin{equation*}
	\gw(K) \leq 3\sum_{k = k_0 + 1}^{k_1} 2^{-k} \big[\max_{S \in \mathcal{S}_{2^{-k}}} \sqrt{2\:\dim(S)} + 2\:\sqrt{\log |\mathcal{S}_{2^{-k}}|} + 1\big] + \sqrt{n}\, 2^{-k_1}.
	\end{equation*} 
\end{proposition}

\begin{proof}
	For any $\theta \in K$ and integer $k$ such that $2^{-k} < \diam(K)$, let $\theta_k$ denote a point in $\mathcal N_k \coloneqq \cup_{S \in \mathcal S_{2^{-k}}} S$ such that 
	\begin{equation*}
	{\rm dist}(\theta, \theta_k)   =  {\rm dist}(\theta, \mathcal N_k).
	\end{equation*}
	Such a point always exists since $\mathcal N_k$ is a finite union of linear subspaces. 
	When $\diam(K) \ge 2^{-k}$, on the other hand, we simply choose $\theta_k$ to be some fixed but arbitrary point $\theta_K$ in $K$. By definition, we thus have
	\begin{equation}
	\label{eq:projection}
	\|\theta - \theta_{k}\| \leq 2^{-k}
	\end{equation}
	for all $k \in \Z$. Let us now write for every $\theta \in K$,
	\begin{equation*}
	\theta = \theta_{K} + \sum_{k = k_0 + 1}^{k_1} (\theta_{k} - \theta_{k - 1}) + (\theta - \theta_{k_1})
	\end{equation*}
	so that
	\begin{equation*}
	\gw(K) = \E \sup_{\theta \in K} \langle Z, \theta \rangle \leq \E \langle Z, \theta_{K} \rangle +  \sum_{k = k_0 + 1}^{k_1} \E \sup_{\theta \in K} \langle Z,\theta_{k} - \theta_{k - 1} \rangle + \E \sup_{\theta \in K}  \langle Z,\theta - \theta_{k_1} \rangle.
	\end{equation*}	
	The first term on the right hand side above is 0, whereas the third time is bounded by 
	$\sqrt{n} 2^{-k_1}$ in view of the Cauchy-Schwarz inequality, display~\eqref{eq:projection} and the standard bound $\E \|Z\| \leq \sqrt{n}$. Therefore we can conclude the proof if we can show
	\begin{equation*}
	\E \sup_{\theta \in K} \langle Z,\theta_{k} - \theta_{k - 1} \rangle \le 3 \cdot 2^{-k} \big[\max_{S \in \mathcal{S}_{2^{-k}}} \sqrt{2\:\dim(S)} + 2\:\sqrt{\log |\mathcal{S}_{2^{-k}}|}\,\big]
	\end{equation*}
	for every integer $k$ satisfying $2^{-k} < \diam(K)$. To this end observe that
	\begin{equation*}
	\|\theta_k - \theta_{k - 1}\| \leq \|\theta_k - \theta\| + \|\theta_{k - 1} - \theta\| \leq 3 \cdot 2^{-k}
	\end{equation*}
	in view of \eqref{eq:projection} and $\theta_k - \theta_{k - 1} \in \mathcal{M}_k$ where $\mathcal{M}_k \coloneqq \{S_1 + S_2: S_1 \in \mathcal{N}_{2^{-k}}, S_2 \in \mathcal{N}_{2^{-(k - 1)}}\}$ is another finite collection of linear subspaces of $\R^n$. It is also clear from the definition that $|\mathcal{M}_k| \leq |\mathcal{N}_{2^{-k}}| |\mathcal{N}_{2^{-(k - 1)}}|$. All these observations bring us to the setting of:

	\begin{lemma}[Gaussian width for union of subspaces]
		\label{lem:gauss_width_union} 
		Let $\mathcal{S}$ be a finite collection of linear subspaces of $\R^n$ and $\Theta = \cup_{S \in \mathcal{S}} S \subset \R^n.$ In words, $\Theta$ is the union of subspaces in $\mathcal{S}$. Then we have
		\begin{equation*}
		\gw(\Theta \cap B(t)) \leq t\: \big[\max_{S \in \mathcal{S}} \sqrt{\dim(S)} + \sqrt{2\:\log  |\mathcal{S}|} + 1\big].
		\end{equation*}
	\end{lemma}
	Using Lemma~\ref{lem:gauss_width_union}, we can immediately deduce that
	\begin{equation*}
	\E \sup_{\theta \in K} \langle Z,\theta_{k} - \theta_{k - 1} \rangle \leq 3 \cdot 2^{-k} M
	\end{equation*}
	where $$M \coloneqq  \big[\max_{S_1 \in \mathcal{S}_{2^{-k}}, S_2 \in \mathcal{S}_{2^{-(k - 1)}}} \sqrt{\dim(S_1) + \dim(S_2)} + \:\sqrt{2\:\log |\mathcal{S}_{2^{-k}}| + 2\:\log |\mathcal{S}_{2^{-(k - 1)}}|} + 1\big].$$
	Now we can assume without any loss of generality that $|\mathcal{S}_{2^{-(k - 1)}}| \leq |\mathcal{S}_{2^{-k}}|$ as well as $$\max_{S \in \mathcal{S}_{2^{-(k - 1)}}} \dim(S) \leq \max _{S \in \mathcal{S}_{2^{-k}}} \dim(S),$$
	which finishes the proof of the proposition.
	
	\smallskip
	
	Let us now return to the proof of Lemma~\ref{lem:gauss_width_union}. Since $\Theta = t \, \Theta$, it follows from the definition of Gaussian widths that $\gw(\Theta \cap B(t)) = t\,\gw(\Theta \cap B(1))$ meaning we only need to work with $t = 1$.  We will use the following lemma involving only one linear subspace:
	\begin{lemma}\label{lem:1}
		For any linear subspace $S$ of $\R^n$ and $u \ge 0$, we have with probability at least $1 -  \exp(-\frac{u^2}{2}),$
		\begin{equation}\label{eq:lem0}
		\sup_{\theta \in S \cap B(1)}  \langle Z, \theta \rangle \leq \sqrt{\dim(S)} + u.
		\end{equation}
	\end{lemma}
	\begin{proof}
		We will use the well-known concentration inequality for Lipschitz functions of a Gaussian vector (see, e.g.~\cite[Theorem~7.1]{Ledoux01conc}). First of all notice that the random variable $f(Z) \coloneqq \sup_{\theta \in S \cap B(1)}  \langle Z, \theta \rangle$ is a Lipschitz function of $Z$ with Lipschitz constant $1$. It follows from the observation that, for any $z, z' \in \R^n$ and $\theta \in B(1)$,
		$$\langle z, \theta \rangle - \langle z', \theta \rangle = \langle z - z', \theta\rangle \le \|z - z'\|\|\theta\| \le \|z - z'\|$$
		where in the last but one step we used the Cauchy-Schwarz inequality. Therefore by the Gaussian concentration inequality mentioned in the beginning, we have for any $u \ge 0$
		$$\P (f(Z) - \E f(Z) \ge u) \le \exp(-\frac {u^2}{2}).$$
		Hence we can deduce the lemma upon showing that $\E f(Z) \le \sqrt{\dim(S)}$. To this end notice that $f(Z) = \|P_{S} Z\|$ where $P_S$ is the orthogonal projector onto the subspace $S$. Therefore, $f(Z)^2$ is a chi squared random variable whose degree of freedom equals $\dim(S)$ whence we get
		\begin{equation*}
		\E f(Z) \leq \sqrt{\E f(Z)^2} \leq \sqrt{\dim(S)}. \qedhere
		\end{equation*}
	\end{proof}

	Now, using a union bound followed by Lemma~\ref{lem:1} we get
	\begin{align*}
	\P\big(\sup_{\theta \in \Theta \cap B(1)} \langle Z, \theta \rangle \ge \max_{S \in \mathcal{S}} \sqrt{\dim(S)} + u\big) &\leq \sum_{S \in \mathcal{S}} \mathbb P\big(\sup_{\theta \in S \cap B(1)} \langle Z, \theta \rangle \ge \max_{S \in \mathcal{S}} \sqrt{\dim(S)} + u\big) \nonumber \\&\leq |\mathcal{S}| \exp(-\frac{u^2}{2}).
	\end{align*}
	Plugging in $u = \sqrt{2\:\log  |\mathcal{S}| + v^2}$ we obtain
	\begin{equation*}
	\P\big (\sup_{\theta \in \Theta \cap B(1)} \langle Z, \theta \rangle \ge \max_{S \in \mathcal{S}} \sqrt{\dim(S)} + \sqrt{2\:\log  |\mathcal{S}|} + v \big) \leq \exp(-\frac{v^2}{2}).
	\end{equation*}
	Integrating the above tail bound finishes the proof of Lemma~\ref{lem:gauss_width_union}.
\end{proof}

\begin{remark}
	A general and perhaps more standard way of bounding the Gaussian width of a set is through Dudley's entropy integral inequality (see \cite{Dudley67}). In this approach one first finds a ``good'' covering set corresponding to any given radius $r$ for the underlying set to obtain upper bounds on covering numbers which then enter an integral (after being transformed appropriately) bounding the Gaussian width. Proposition~\ref{prop:chaining} provides an alternative way when the covering sets are contained in finite unions of linear subspaces with comparable dimensions. For the purpose of the current article, this approach would save us some extraneous log factors in our bounds.
\end{remark}
\section{Proof of Theorem~\ref{Thm:1}}\label{Sec:5}
\label{sec:thm1}
We first set up some notations which would henceforth be used throughout the paper.
For a positive integer $n$, we will denote the subset of positive integers $\{1,\dots,n\}$ by $[n]$. Recall that in all the proofs of our results, we are going to use $\TV$ to denote the \emph{unnormalized} version of~\eqref{eq:TVdef} as defined in~\eqref{eq:unnorm}. 
Also we will use $V$ for the unnormalized total variation instead of the \emph{bold} $\mb V$ used for the {\em corresponding} normalized version.

Let us recall that the estimator $\hat{\theta}_{V}$ is the least squares estimator on the set 
\begin{equation}
\label{def:KnV}
K_n(V) \coloneqq \{\theta \in \R^{n \times n}: \TV(\theta) \leq V\}.
\end{equation}
We will often drop the subscript $n$ and just write $K(V)$ when the dimension is clear from the context. Below we adopt the standard approach of using the basic inequality defining least squares estimators to reduce our problem to controlling Gaussian widths.
\begin{lemma}\label{lem:basicineq}
	Under the same conditions as in the statement of Theorem~\ref{Thm:1} we have
	\begin{equation*}
	\E \|\hat{\theta}_{V} - \theta^*\|^2 \leq 2\:\sigma\: \E \sup_{\theta: \TV(\theta) \leq 2V, \,\overline{\theta} = 0} \langle Z,\theta \rangle + 2\:\sigma^2.
	\end{equation*}
\end{lemma}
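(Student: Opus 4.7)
The plan is to carry out the standard ``basic inequality'' argument and then carefully peel off the nuisance direction of constant matrices, which is where the seminorm (rather than norm) nature of $\TV$ forces some care.

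First, since $\theta^*$ is feasible for the constrained program (as $\TV(\theta^*) \leq V$) and $\hat{\theta}_V$ is its minimizer, we have $\|y - \hat{\theta}_V\|^2 \leq \|y - \theta^*\|^2$. Substituting $y = \theta^* + \sigma Z$ and expanding the squares gives the usual basic inequality
\begin{equation*}
\|\hat{\theta}_V - \theta^*\|^2 \leq 2 \sigma \, \langle Z, \hat{\theta}_V - \theta^* \rangle.
\end{equation*}
Writing $\eta = \hat{\theta}_V - \theta^*$, the constraint $\TV$ is a seminorm that annihilates the constant matrix $\mathbf{1}$, so $\eta$ only lives in a cone that is unbounded in the $\mathbf{1}$ direction; we must therefore separate this direction out.

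The key observation I would use is that because $\TV(\theta + c\mathbf{1}) = \TV(\theta)$ for any $c \in \R$, the optimality of $\hat{\theta}_V$ forces its mean to coincide with that of $y$; otherwise one could replace $\hat{\theta}_V$ by $\hat{\theta}_V + (\overline{y} - \overline{\hat{\theta}_V})\mathbf{1}$, which has the same $\TV$ and a strictly smaller residual. Consequently $\overline{\eta} = \overline{\hat{\theta}_V} - \overline{\theta^*} = \overline{y} - \overline{\theta^*} = \sigma \overline{Z}$, a fact I will use to compute the mean part exactly rather than bound it. Decomposing $\eta = \overline{\eta}\,\mathbf{1} + \tilde{\eta}$ with $\tilde{\eta}$ mean zero and using the orthogonality $\langle \mathbf{1}, \tilde{\eta}\rangle = 0$, the basic inequality becomes
\begin{equation*}
n^2 \overline{\eta}^{\,2} + \|\tilde{\eta}\|^2 \leq 2\sigma\, \overline{\eta}\, \langle Z, \mathbf{1}\rangle + 2\sigma\,\langle Z, \tilde{\eta}\rangle.
\end{equation*}
Substituting $\overline{\eta} = \sigma \overline{Z}$ into both sides shows that the two ``$\overline{\eta}$'' terms on the left and right reduce to $\sigma^2 \langle Z,\mathbf{1}\rangle^2/n^2$ and $2\sigma^2\langle Z,\mathbf{1}\rangle^2/n^2$ respectively, leaving
\begin{equation*}
\|\tilde{\eta}\|^2 \leq \sigma^2\,\langle Z,\mathbf{1}\rangle^2/n^2 + 2\sigma\, \langle Z,\tilde{\eta}\rangle.
\end{equation*}

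To finish, I would take expectations. Since $\langle Z,\mathbf{1}\rangle^2/n^2$ is a $\chi_1^2$ random variable in disguise with mean $1$, the first term contributes exactly $\sigma^2$. For the second, I use the triangle inequality $\TV(\tilde{\eta}) = \TV(\eta) \leq \TV(\hat{\theta}_V) + \TV(\theta^*) \leq 2V$, so $\tilde{\eta}$ is almost surely a feasible element of the set $\{\theta : \TV(\theta)\leq 2V,\, \overline{\theta} = 0\}$, yielding $\E\langle Z,\tilde{\eta}\rangle \leq \E \sup_{\theta : \TV(\theta)\leq 2V, \,\overline{\theta}=0}\langle Z,\theta\rangle$. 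Finally, combining $\E\|\eta\|^2 = \E[n^2\overline{\eta}^{\,2}] + \E\|\tilde{\eta}\|^2 = \sigma^2 + \E\|\tilde{\eta}\|^2$ with the bound on $\E\|\tilde{\eta}\|^2$ produces the stated inequality with the exact constants $2\sigma$ and $2\sigma^2$.

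The only genuinely non-routine step is the realization that $\overline{\hat{\theta}_V} = \overline{y}$ exactly; this is what lets us avoid losing a constant through AM-GM on the nuisance mean direction and arrive at the tight $2\sigma^2$ rather than a suboptimal $4\sigma^2$. All remaining steps are algebraic manipulations and an application of $\E\langle Z,\tilde{\eta}\rangle \leq \gw$ of the symmetric feasible set.
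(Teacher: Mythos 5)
Your proof is correct and follows essentially the same route as the paper's: both start from the basic inequality, exploit the fact that $\overline{\hat{\theta}_V} = \overline{y}$ (which the paper asserts and you correctly justify via shift-invariance of $\TV$) to split off the constant direction, bound the mean-zero component by the Gaussian width of $\{\theta : \TV(\theta) \leq 2V,\ \overline{\theta}=0\}$, and compute the contribution of the mean direction exactly to obtain the additive $2\sigma^2$. The only difference is bookkeeping: the paper bounds $\E\langle \overline{y}\mathbf{1} - \overline{\theta^*}\mathbf{1}, Z\rangle = \sigma$ directly inside the pointwise inequality, whereas you substitute $\overline{\eta} = \sigma\overline{Z}$ first and add back $n^2\overline{\eta}^{\,2}$ at the end; both yield the same constants.
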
 
\begin{proof}
	Since $V \geq V^*$ we have the basic inequality $\|y - \hat{\theta}_{V}\|^2 \leq \|y - \theta^*\|^2$. Substituting $y = \theta^* + \sigma Z$ gives us
	\begin{align*}
	\|\theta^* - \hat{\theta}_{V}\|^2 &\leq 2 \langle \hat{\theta}_{V} - \theta^*, y - \theta^* \rangle = 2 \langle \hat{\theta}_{V} - \theta^*, \sigma\:Z \rangle\\  
	&=  2\:\sigma\: \langle \hat{\theta}_{V} - \overline{y} \textbf{1} -  (\theta^* - \overline{\theta^*} \textbf{1}), Z \rangle  + 2\:\sigma\: \langle \overline{y} \textbf{1} - \overline{\theta^*} \textbf{1}, Z \rangle\\
	&\leq 2\:\sigma\: \sup_{v: \TV(v) \leq 2V,\, \overline{v} = 0} \langle Z,v \rangle +2\:\sigma\ \langle \overline{y} \textbf{1} - \overline{\theta^*} \textbf{1}, Z \rangle. 
	\end{align*}
	where the last inequality follows because $\overline{\hat{\theta}_{V}} = \overline{y}$ and $\textbf{1}$ refers to the $n \times n$ matrix whose all elements equal 1. Now taking expectation on both sides of the above display and noting that 
	$$\E \langle \overline{y} \textbf{1} - \overline{\theta^*} \textbf{1}, Z \rangle = \sigma\:n^2 \E \overline{Z}^2 = \sigma$$ finishes the proof.
\end{proof} 
Let us define
\begin{equation*}
K^0(V) = K_n^0(V) \coloneqq \{\theta \in \R^{n \times n}: \TV(\theta) \leq V,\, \overline{\theta} = 0\}\,.
\end{equation*} 
In view of Lemma~\ref{lem:basicineq}, all we need is to evaluate the Gaussian width of the set $K^0(2V)$ to which end we will use Proposition~\ref{prop:chaining}. But for that we need to find ``efficient'' subspace covers of the set $K^0(V)$ corresponding to any distance $\epsilon$. Our next proposition will be crucial for this purpose. Below we denote, for any rectangular partition $P$ of $L_n$, the linear subspace of $\R^{n \times n}$ comprising only matrices that are constant on each (rectangular) block of $P$ by $S_P$. 

\begin{proposition}\label{prop:division}
	For every $\eta, V > 0$, there exist a set of rectangular partitions $\mathcal P(V,n,\eta)$ of $L_n$ (recall the definition from Section~\ref{sec:adaptive}) and a universal constant $C > 0$ such that
	\begin{itemize}
		\item For any $\theta \in K(V)$ (recall the definition from \eqref{def:KnV}), there exists a partition $P \in \mathcal P(V,n,\eta)$ satisfying $$({\rm dist}(\theta,S_p))^2 \leq V \eta \log n + \eta^2.$$ 

		\item Any partition $P \in \mathcal P(V,n,\eta)$ has number of (rectangular) blocks bounded by  $$ |P| \leq 1 + C\frac{V}{\eta} \log n.$$
		
		\item The cardinality of $\mathcal P(V,n,\eta)$ is bounded as
		$$\log |\mathcal P(V,n,\eta)| \leq  C\frac{V}{\eta} (\log n)^2.$$
	\end{itemize}
\end{proposition}

Before we prove this proposition, let us finish the proof of Theorem $2.1$ assuming it. 

\begin{proof}[Proof of Theorem~\ref{Thm:1}]
	Throughout this proof, we will use $C$ to denote an unspecified but universal positive constant whose exact value may change from one line to the next. For any $\epsilon, V > 0$, let $\eta_{\epsilon} \coloneqq \min(\frac{\epsilon^2}{2V \log n}, \frac{\epsilon}{\sqrt{2}})$ and consider the set of rectangular partitions $\mathcal{P}(V,n,\eta_{\epsilon})$ given by Proposition~\ref{prop:division}. Next define a collection $\mathcal{S}_{\epsilon}$ of linear subspaces of $\R^{n \times n}$ as follows:
	$$\mathcal{S}_{\epsilon} \coloneqq \{S_{P}: P \in \mathcal{P}(V,n,\eta_{\epsilon})\}.$$ By Proposition~\ref{prop:division} it can be seen that $\mathcal{S}_{\epsilon}$ forms an $\epsilon$ subspace cover of $K(V)$ and hence of $K^0(V)$ as well. Also, from the second and third properties of $\mathcal P(V, n , \eta)$ we get
	\begin{equation*}
	\max_{S \in \mathcal{S}_{\epsilon}} \dim(S) \leq 1 + C\max\big( \frac{V^2 (\log n)^2}{\epsilon^2}, \frac{V\log n}{\epsilon}\big) \text{ and } \log |\mathcal{S}_{\epsilon}| \leq C\max\big( \frac{V^2 (\log n)^3}{\epsilon^2}, \frac{V(\log n)^2}{\epsilon}\big).
	\end{equation*}
	We now have all the ingredients to apply Proposition~\ref{prop:chaining} except for an upper bound on the diameter of $K^0(V)$. To this end we use Proposition~\ref{prop:gagliardo} --- which we are going to state in the next subsection --- to deduce that $t \coloneqq {\rm diam}(K^0(V)) \le C \,V$. We thus obtain from Proposition~\ref{prop:chaining}, with $k_0 = \lfloor -\log_2 t\rfloor$ and $k_1 = \lceil \log_2(\frac nV)\rceil$,
	\begin{align}
	\GW(K^0(V)&) \leq C \sum_{k = k_0 + 1}^{k_1} 2^{-k} \big(\frac{V (\log n)^{3/2}}{2^{-k}} + (\log \e n)^{1/2} \,\big) + n\,2^{-k_1}  \nonumber \\ &\leq C\, V (k_1 - k_0)(\log n )^{3/2} + C\cdot2^{-k_0} (\log \e n)^{1/2} + V \nonumber \\
	&\leq C \big(\log \big(\frac{tn}{V} \vee \e\big) V (\log \e n)^{3/2} + t(\log \e n)^{1/2}\big) \le C V(\log \e n)^{5/2}.\label{eq:gwkv}
	\end{align}
	Theorem~\ref{Thm:1} now follows immediately from Lemma~\ref{lem:basicineq}
\end{proof}

\subsection{Proof of Proposition~\ref{prop:division}}
\label{sec:division_proof}
Given any partition $P$ of $L_n$ into rectangles, it is clear that the orthogonal projection of $\theta$ onto $S_P$, i.e., the unique matrix $\hat \theta_P \in S_P$ satisfying $\|\theta - \hat \theta_P\| = {\rm dist}(\theta, S_P)$, is constant on every rectangle $R$ of $P$ with the common value being the mean of $\theta_{|R}$ --- the restriction of $\theta$ to $R$. Therefore, with $\bar \theta_{|R}$ denoting the mean of $\theta_{|R}$,
\begin{equation}
\label{eq:tvl2distance}
{\rm dist}(\theta, S_P)^2 = \|\theta - \hat \theta_P\|^2 = \sum_{R \in P}\|\theta_{|R} - \bar \theta_{|R}1_R\|^2 \le |P|\,\max_{R \in P}\|\theta_{|R} - \bar \theta_{|R} 1_R\|^2
\end{equation}
where $1_{R} \in \R^R$ consists only of 1's. Our next result provides a way to bound the squared Frobenius distance between $\theta_{|R}$ and $\bar \theta_{|R}1_R$ in terms of the total variation of $\theta_{|R}$. This result, which is a discrete analogue of the Gagliardo-Nirenberg-Sobolev inequality for compactly supported smooth functions, will be crucial for deriving the first condition stipulated in Proposition~\ref{prop:division} for the particular partitioning scheme we are going to propose in this regard.
\begin{proposition}[Discrete Gagliardo-Nirenberg-Sobolev Inequality]
	\label{prop:gagliardo}
	Let $\theta \in \R^{m \times n}$ and $\overline \theta \coloneqq \sum_{i = 1}^m\sum_{j = 1}^n\theta[i, j] / mn$ be the average of the elements of $\theta$. Then we have, with $a \wedge b$ denoting the minimum of the (real) numbers $a$ and $b$,
	$$\sum_{i = 1}^m\sum_{j = 1}^n (\theta[i, j] - \overline \theta)^2 \leq (5 + \frac{4mn}{n^2\wedge m^2})\TV(\theta)^2\,.$$
	So in particular when $m = n$, we have
	$$\sum_{i = 1}^n\sum_{j = 1}^n(\theta[i, j] - \overline \theta)^2 \leq 9\TV(\theta)^2\,.$$
\end{proposition}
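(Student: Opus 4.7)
The plan is to emulate the classical proof of the critical Sobolev embedding $W^{1,1}(\R^2)\hookrightarrow L^2(\R^2)$ in a discrete setting, with the centering $\theta\mapsto\theta-\overline\theta$ playing the role of the usual compact-support assumption. Since $\TV$ is invariant under the addition of a constant matrix, I may assume $\overline\theta=0$ throughout, so the target inequality reduces to $\sum_{i,j}\theta[i,j]^2 \leq C_{m,n}\,\TV(\theta)^2$.

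The crux is a pair of pointwise bounds. Write $V^{\mathrm r}_i$ for the total variation of the $i$th row of $\theta$, $V^{\mathrm c}_j$ for that of the $j$th column, and $V_{\mathrm h}:=\sum_i V^{\mathrm r}_i$, $V_{\mathrm v}:=\sum_j V^{\mathrm c}_j$ for the horizontal and vertical parts of $\TV(\theta)$. For any sites $(i,j)$ and $(i',j')$, the triangle inequality applied to the ``L-shaped'' path $(i,j)\to(i,j')\to(i',j')$ gives $|\theta[i,j]-\theta[i',j']|\leq V^{\mathrm r}_i+V^{\mathrm c}_{j'}$. Averaging this inequality over $(i',j')\in[m]\times[n]$ and invoking $\overline\theta=0$ to bound $|\theta[i,j]|$ by the average of $|\theta[i,j]-\theta[i',j']|$ yields
\[
|\theta[i,j]|\;\leq\; V^{\mathrm r}_i+\tfrac{1}{n}V_{\mathrm v}.
\]
The symmetric path through $(i',j)$ produces the companion bound $|\theta[i,j]|\leq V^{\mathrm c}_j+\tfrac{1}{m}V_{\mathrm h}$.

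Since a nonnegative quantity bounded above by each of $A_i$ and $B_j$ has square bounded by $A_i B_j$, multiplying the two pointwise estimates and summing over $(i,j)$ factorizes the right-hand side as a product of a row sum and a column sum:
\[
\sum_{i,j}\theta[i,j]^2\;\leq\;\Bigl(V_{\mathrm h}+\tfrac{m}{n}V_{\mathrm v}\Bigr)\Bigl(V_{\mathrm v}+\tfrac{n}{m}V_{\mathrm h}\Bigr).
\]
Viewed as a function of $V_{\mathrm h}$ with $V_{\mathrm h}+V_{\mathrm v}=\TV(\theta)$ fixed, the right-hand side is a convex quadratic and hence maximized at an endpoint, giving the bound $\tfrac{mn}{m^2\wedge n^2}\,\TV(\theta)^2$. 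This is in fact strictly stronger than the proposition's $\bigl(5+\tfrac{4mn}{m^2\wedge n^2}\bigr)\TV(\theta)^2$; the additive $5$ is slack I would not bother to recover.

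\textbf{Main obstacle.} The only conceptually nontrivial step is recognizing the correct form of the pointwise estimate: a centered value $\theta[i,j]$ is controlled by the total variation of \emph{a single row} (or column) through that point, plus only a $1/n$ (resp.\ $1/m$) multiple of the total vertical (horizontal) variation. This is the discrete shadow of the $W^{1,1}\hookrightarrow L^2$ embedding in dimension two; in contrast, a naive row-by-row one-dimensional Poincar\'e argument would cost an unwanted factor of $n$ and destroy the estimate. Once the two one-sided bounds are in hand, the multiplication-and-factorization trick makes the remainder of the argument essentially mechanical.
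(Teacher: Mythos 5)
Your proof is correct, and it buys a strictly better constant than the paper's. Both arguments are discrete Gagliardo--Nirenberg proofs built on the same skeleton: bound each centered entry by a product of a row-direction and a column-direction variation, sum over $(i,j)$, and factorize the double sum into (row sum)$\times$(column sum). The difference is in the decomposition. The paper telescopes $\theta[i,j]-\overline\theta$ from an artificial boundary (setting $\theta[i,0]=\theta[0,j]=\overline\theta$), which produces the cross term $\bigl(\TVr(\theta)+\sum_i|\theta[i,1]-\overline\theta|\bigr)\bigl(\TVc(\theta)+\sum_j|\theta[1,j]-\overline\theta|\bigr)$ and then needs a separate L-path/averaging argument just to control the boundary sums via $\sum_i|\theta[i,1]-\overline\theta|\le(1+\tfrac{m}{n})\TV(\theta)$; the additive $5$ in the statement is exactly the price of these boundary corrections. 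You deploy the L-path/averaging argument once, directly on $\theta[i,j]-\overline\theta$, and obtain the two boundary-free pointwise bounds $|\theta[i,j]-\overline\theta|\le V^{\mathrm r}_i+V_{\mathrm v}/n$ and $|\theta[i,j]-\overline\theta|\le V^{\mathrm c}_j+V_{\mathrm h}/m$; multiplying them and summing factorizes cleanly into $(V_{\mathrm h}+\tfrac{m}{n}V_{\mathrm v})(V_{\mathrm v}+\tfrac{n}{m}V_{\mathrm h})$. The endpoint optimization is legitimate since, writing $a=m/n$, the coefficient of $V_{\mathrm h}^2$ is $a+a^{-1}-2=(\sqrt a-1/\sqrt a)^2\ge 0$, so the maximum over $V_{\mathrm h}\in[0,\TV(\theta)]$ is attained at an endpoint and equals $\max\{a,a^{-1}\}\TV(\theta)^2=\tfrac{mn}{m^2\wedge n^2}\TV(\theta)^2$, which indeed dominates nothing more than the proposition requires (it is at most $5+\tfrac{4mn}{m^2\wedge n^2}$ times $\TV(\theta)^2$, with room to spare). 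The only step worth spelling out in a written version is the identity $\sum_{i,j}A_iB_j=\bigl(\sum_iA_i\bigr)\bigl(\sum_jB_j\bigr)$ underlying the factorization; otherwise the argument is complete as sketched.
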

\begin{remark}
	Although the Gagliardo-Nirenberg-Sobolev inequality is classical for Sobolev spaces (see, e.g., Chapter~12 in \cite{leoni2017first}), we are not aware of any discrete version in the literature that applies to arbitrary matrices of finite size. Also it is not clear if the inequality in this exact form follows directly from the classical version. 
\end{remark}

Now we give a scheme for subdividing $\theta$ in multiple steps until the total variation of each of the resulting submatrices is bounded above by $\eta$. 

\smallskip

\noindent{\bf A greedy partitioning scheme: ~}For convenience of exposition we will assume that $n$ is an integer power of 2. The general $n$ can then be accommodated from the following observation. For any $t > 0$, let $B_{n, n}(t)$ denote the $t$-Euclidean (Frobenius) ball in $\R^{n \times n}$ and consider $\theta \in K_n(V) \cap B_{n, n}(t)$ (recall from our proof of Theorem~\ref{Thm:1} that we actually bound $\GW(K_n(V) \cap B_{n,n}(CV))$ for some universal constant $C$). Now let $n'$ denote the smallest integer power of 2 that is larger than or equal to $n$ and partition $\theta$ as \[
\theta =
\left[
\begin{array}{c c}
\theta_{11} & \theta_{12} \\
\theta_{21} & \theta_{22}
\end{array}
\right]
\]
where $\theta_{22} \in \R^{(n' - n) \times (n' - n)}$. Also define a $n' \times n'$ matrix $f(\theta)$ as
\[
f(\theta) =
\left[
\begin{array}{l l l}
\theta_{11} & \theta_{12} & \overleftarrow{\theta_{12}}\\
\theta_{21} & \theta_{22} & \overleftarrow{\theta_{22}}\\
\theta_{21}\uparrow & \theta_{22}\uparrow & \overleftarrow{\theta_{22}} \uparrow
\end{array}
\right]
\]
where $\overleftarrow{M}$, for any matrix $M$, denotes the matrix obtained by reversing the order of its columns whereas $M\uparrow$ is obtained by reversing the 
order of its rows. It is clear from the definition that $f(\theta) \in K_{n'}(4V) \cap B_{n',n'}(t)$ and also $$\GW(K_n(V) \cap B_{n,n}(t)) \le \E \sup_{\theta \in K_n(V): \|\theta\| \leq t} \langle Z_{n',n'}, f(\theta) \rangle \leq \GW(K_{n'}(4V) \cap B_{n',n'}(2t))$$
where $Z_{n', n'} \sim N(0_{n' \times n'}, I)$.

Let us now describe the scheme which is of the same flavor as the {\em breadth-first exploration} of a quaternary tree. The root node of the tree represents $L_n$ and the nodes at any level (or depth) $i \in [\log_2 n]$ represent (disjoint) rectangles of side-length $n2^{-i}$ with the property that the leaves of the tree truncated at level $i$ form a partition of $L_n$. Given level~$i-1$, the $i$-th level is constructed (or explored) as follows. For every leaf,  i.e., rectangle $R$ at level~$i-1$ satisfying $\TV(\theta_{|R}) > \eta$, we add four children of $R$, namely $R_{11}, R_{12}, R_{21}$ and $R_{22}$, to the tree where\[
R =
\left[
\begin{array}{c c}
R_{11} &  R_{12} \\
R_{21} &  R_{22}
\end{array}
\right]
\]
and $\nr(R_{11}) = \nc(R_{11}) = \nr(R)/2$. If the set of such leaves is empty or if $i - 1 = \log_2 n$, we stop.

\smallskip

Let us denote the final rectangular partition of $L_n$ obtained by applying the $(\TV , \eta)$ scheme to $\eta$ as $P_{\theta, \eta}$ and the set of partitions $\{P_{\theta,\eta}: \theta \in \mathcal \R^{n \times n}, \TV(\theta) \leq V\}$ as $\mathcal P(V, n, \eta)$. In our next result we verify that $\mathcal P(V, n, \eta)$ satisfy the last two properties stipulated in Proposition~\ref{prop:division}.
\begin{lemma}\label{lem:division}
	There exists a universal constant $C > 0$ such that for any $\theta \in \R^{n \times n}$ and $\eta > 0$, we have
	\begin{equation*}
	|P_{\theta, \eta}| \leq 1 + C\, \TV(\theta) \eta^{-1} \log n.
	\end{equation*}
	Furthermore, for any $V > 0$ we have 
	\begin{equation*}
	\log |\mathcal P(V,n,\eta)| \leq C  \:V \eta^{-1}(\log n)^2\,.
	\end{equation*}
\end{lemma}

\begin{proof}
	The basic idea of the proof hinges on super-additivity of the $\TV$ 
	functional over disjoint rectangles. Let $n_i$ denote the number of leaves in the tree formed by the scheme truncated at level $i$. In other words, $n_i$ is the cardinality of the partition $P_i$ formed by the rectangles corresponding to the leaves of the tree truncated at level $i$. Also let $s_i$ denote the number of leaves $R$ at level $i$ satisfying $\TV(\theta_{|R}) > \eta$. Clearly $n_0 = 
	1$ and $n_{i + 1} = n_i + 3 s_i$. Notice that, due to super-additivity of the $\TV$ functional, we must 
	have 
	\begin{equation}
	\label{eq:TV_big}
	s_i \leq \frac{\TV(\theta)}{\eta}.
	\end{equation}
	This implies in particular that
	\begin{equation}
	\label{eq:leaves_bnd}
	n_i \leq 1 + 3 i \frac{\TV(\theta)}{\eta}.
	\end{equation}
	Since $i \le \log_2 n$ by construction, it then follows
	\begin{equation*}
	|P_{\theta, \eta}| \leq 1 + 3\log_2 n \frac{\TV(\theta)}{\eta}.
	\end{equation*}
	Next we bound the number of possible partitions $P_{\theta,\eta}$ when $\TV(\theta) \leq V$. The number of distinct ways of adding leaves at level~$i + 1$ is at most $\big(1 + 3 i \frac{V}{\eta}\big)^{\frac{V}{\eta}}$
	in light of the displays \eqref{eq:TV_big} and \eqref{eq:leaves_bnd}. Therefore 
	\begin{align*}
	&\log |\mathcal P(V,n,\ep)| \leq C \frac V {\eta}(\log n)^2
	\end{align*}
	for some universal constant $C > 0$.
\end{proof}

With Lemma~\ref{lem:division} and Proposition~\ref{prop:gagliardo} in hand, we are now in a position to finish the proof of Proposition~\ref{prop:division}.

\begin{proof}[Proof of Proposition~\ref{prop:division}]
	For any given $\theta \in K(V)$, run the $(\TV,\eta)$ greedy scheme to obtain the partition $P_{\theta,\eta}.$ Within every rectangle of the partition $P_{\theta,\eta}$ the total variation of $\theta$ is at most $\eta$. Also, the number of rectangles in $P_{\theta,\eta}$ is at most $1 + C\frac{V}{\eta} \log n$. 
	Then by Proposition~\ref{prop:gagliardo} and \eqref{eq:tvl2distance} we can conclude
	\begin{equation*}
	\|\tilde{\theta} - \theta\|^2 \leq C V \eta \log n + \eta^2.
	\end{equation*}
	Also, by Lemma~\ref{lem:division}, as $\theta$ varies in $K(V)$, the number of distinct partitions $P_{\theta,\eta}$ that can be obtained is bounded by $\frac{V}{\eta} \log n$. This finishes the proof. 
\end{proof}

Finally it remains to give the proof of Proposition~\ref{prop:gagliardo}.
\begin{proof}[Proof of Proposition~\ref{prop:gagliardo}]
	For any $(i, j) \in [m] \times [n]$, 
	we have
	\begin{equation*}
	\label{eq:gagliardo1}
	(\theta[i, j] - \overline \theta )^2 \leq \sum_{j' \in [j]}|\theta[i, j'] - \theta[i, j' - 1]|\sum_{i' \in [i]}|\theta[i', j] - \theta[i' - 1, j]|\,,
	\end{equation*}
	where $\theta[i, 0] = \theta[0, j] = \overline \theta$ for all $(i, j) \in [m] \times [n]$. Summing this over all $i$ and $j$ we get
	\begin{align}
	\label{eq:gagliardo2}
	\sum_{i \in [m], j \in [n]}&(\theta[i, j] - \overline \theta )^2 \nonumber\\
	&\leq \sum_{i' \in [m], j' \in [n]}\sum_{i \geq i', j \geq j'}|\theta[i, j'] - \theta[i, j' - 1]|\times|\theta[i', j] - \theta[i' - 1, j]|\nonumber\\
	&\leq\sum_{i' \in [m], j' \in [n]}\sum_{i \in [m], j \in [n]}|\theta[i, j'] - \theta[i, j' - 1]|\times|\theta[i', j] - \theta[i' - 1, j]|\nonumber\\
	&=\sum_{i \in [m], j \in [n]}|\theta[i, j] - \theta[i, j - 1]|\sum_{i \in [m], j \in [n]}|\theta[i, j] - \theta[i-1, j]|\nonumber\\
	&= \big(\TVr(\theta) + \sum_{i \in [m]}|\theta[i, 1] - \overline \theta|\big)\big(\TVc(\theta) + \sum_{j \in [n]}|\theta[1, j] - \overline \theta|\big)\,.
	\end{align} 
Here the total variation $\TVr(\theta)$ along rows is defined as
$$\TVr(\theta) \coloneqq \sum_{i \in [m]}\sum_{j \in [n-1]} |\theta[i, j + 1] - \theta[i, j]|$$
and $\TVc(\theta) \coloneqq \TVr(\theta^T)$. Now let us try to bound $|\theta[i, 1] - \overline \theta|$.
	\begin{align*}
	\label{eq:gagliardo3}
	|\theta[i&, 1] - \overline \theta| \\
	&= \frac{1}{mn}\sum_{i' \in [m], j' \in [n]}|\theta[i, 1] - \theta[i', j']|\nonumber \\ 
	&\leq \frac{1}{mn}\sum_{i' \in [m], j' \in [n]}(|\theta[i, 1] - \theta[i, j']| + |\theta[i, j'] - \theta[i', j']|)\nonumber \\
	&\leq \frac{1}{n}\sum_{j' \in [n]}|\theta[i, 1] - \theta[i, j']| + \frac{1}{mn}\sum_{i' \in [m], j' \in [n]} |\theta[i, j'] - \theta[i', j']|\nonumber\\
	&\leq \TV(\theta[i, ]) + \frac{1}{mn}\sum_{j' \in [n]}\sum_{i' \in [m]}\TV(\theta[, j'])\nonumber\\
	&\leq \TV(\theta[i, ]) + \frac{1}{n}\sum_{j' \in [n]}\TV(\theta[, j']) = \TV(\theta[i, ]) + \frac{1}{n}\TV(\theta)\,.
	\end{align*}
	Hence 
	\begin{equation*}
	\label{eq:gigliardo4}
	\sum_{i \in [m]}|\theta[i, 1] - \overline \theta| \leq (1 + \frac{m}{n})\TV(\theta)\,.
	\end{equation*}
	Similarly
	\begin{equation*}
	\label{eq:gigliardo5}
	\sum_{j \in [n]}|\theta[1, j] - \overline \theta| \leq (1 + \frac{n}{m})\TV(\theta)\,.
	\end{equation*}
	Plugging these bounds into the last expression in \eqref{eq:gagliardo2}, we get
	$$\sum_{i \in [m], j \in [n]}(\theta[i, j] - \overline \theta)^2 \leq (2 + \frac{m}{n})(2 + \frac{n}{m})\TV(\theta)^2 \leq (5 + \frac{4mn}{n^2\wedge m^2})\TV(\theta)^2\,.\qedhere$$	
\end{proof}

\section{Proofs of Theorem~\ref{thm:adap} and Theorem~\ref{thm:lbd}}\label{Sec:adap}
We first describe the precise connection between MSE and Gaussian widths. Recall that use $B_{m,n}(t)$ to denote the usual Euclidean ball of radius $t$ in $\R^{m \times n}$. The \emph {statistical dimension} of a closed convex cone $K \subset \R^{N} = \R^{n \times n}$ is defined as 
\begin{equation*}
\delta(K) := \E \|\Pi_K(Z)\|^2 \qt{where $Z \sim N(0, I)$}
\end{equation*}
and $\Pi_K(Z) := \argmin_{u \in K} \|Z - u\|^2$ is the Euclidean projection of
$Z$ onto $K$. The terminology of statistical dimension is due to~\citet{amelunxen2014living} and we refer the reader to this paper for many properties of the statistical dimension. The statistical dimension $\delta(K)$ is closely related to the Gaussian width of $K \cap B_{n,n}(1).$ 
It has been shown in~\cite[Proposition~10.2]{amelunxen2014living} that 
\begin{equation}\label{gast}
\big[\GW(K \cap B_{n,n}(1))\big]^2 \leq \delta(K) \leq \big[\GW(K \cap B_{n,n}(1))\big]^2 + 1
\end{equation}
for every closed convex cone $K$.

The connection of the statistical dimension of tangent cones to 
the risk of $\hat{\theta}$ is the content of the following result
due to~\citet[Corollary 2.2]{bellec2018sharp}. 
\begin{theorem}[~\cite{bellec2018sharp}]\label{bellecthm} 
Suppose $Y \sim N(\theta^*, \sigma^2 I)$ for some $\theta^* \in \R^N$. Then 
\begin{equation*}\label{belma.eq}
 {\rm MSE}(\hat{\theta}_{V}, \theta^*) \leq \inf_{\theta \in K(V)} \left[\frac{1}{N}
\|\theta - \theta^*\|^2 + \frac{\sigma^2}{N}
\delta(T_{K(V)}(\theta))\right]. 
\end{equation*}
\end{theorem}

Another result that is of use to us is the following result of~\citet{oymak2013sharp} (Theorem $2.1$). It says that the upper bound provided in Theorem~\ref{bellecthm} is essentially tight. Recall from Section~\ref{sec:gauss_width_bnd_results} that $K^* = \{\theta \in \R^{n \times n}: \TV(\theta) \leq \TV(\theta^*)\}.$
\begin{theorem}[~\cite{oymak2013sharp}~]\label{hassibithm}
	\begin{equation*}
	\lim_{\sigma \rightarrow 0} \frac{1}{\sigma^2} {\rm MSE}(\hat{\theta}_{V^*},\theta^*) = \frac{1}{N} \delta(T_{{K}^*}(\theta^*)) \geq \frac{1}{N} \big[\GW(T_{{K}^*}(\theta^*) \cap B_{n,n}(1))\big]^2\,.
	\end{equation*}
\end{theorem}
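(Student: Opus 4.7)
The plan is to invoke the Oymak--Hassibi identity (Theorem~\ref{hassibithm}), which rewrites the low-noise limit of the MSE exactly as $\tfrac{1}{N}\,\GW\!\big(T_{K(V^*)}(\theta^*)\cap B_{n,n}(1)\big)^2$. Since $N^{1/4} = n^{1/2}$, proving the theorem reduces to the Gaussian width lower bound
\begin{equation*}
\GW\!\big(T_{K(V^*)}(\theta^*)\cap B_{n,n}(1)\big) \;\geq\; c\, n^{1/4}.
\end{equation*}

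The first step is to describe the tangent cone explicitly. Because $\theta^*$ is constant on each of the two halves and $\theta^*_u - \theta^*_v = -1$ only across the $n$ boundary edges $e_i = ((i,n/2),(i,n/2+1))$ (with $\theta^*_u = \theta^*_v$ on every interior edge), standard subdifferential calculus for $\TV$ at $\theta^*$ yields
\begin{equation*}
T_{K(V^*)}(\theta^*) \;=\; \Big\{h\in\R^{n\times n}\colon \TV_{\mathrm{int}}(h) + \sum_{i=1}^n \bigl(h_{i,n/2+1}-h_{i,n/2}\bigr) \leq 0\Big\},
\end{equation*}
where $\TV_{\mathrm{int}}(h) = \sum_{(u,v)\notin\{e_i\}}|h_u-h_v|$ is the total variation of $h$ restricted to interior edges. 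In words, a direction is tangent iff its interior TV is at most the mean decrease of $h$ across the boundary column.

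The main step is then to exhibit a family in $T\cap B_{n,n}(1)$ whose Gaussian width is at least $c\,n^{1/4}$. A convenient starting family is the ``row-modulated step'' $h_a(i,j) = a_i\,g(j)$ with $g(j)=\mathbf{1}\{j\leq n/2\}-\mathbf{1}\{j>n/2\}$; a direct calculation using the tangent cone condition shows that $h_a\in T$ iff the (non-negative) row profile $a\in\R^n$ satisfies $|a|_{\TV}\leq \tfrac{2}{n}\sum_i a_i$, yielding a one-dimensional TV-type cone on the rows. To reach the required $n^{1/4}$ factor one should combine this one-dimensional reduction with a Sudakov-type minoration applied to a judiciously chosen multiscale combinatorial collection of admissible row profiles, together with further tangent perturbations supported in a thin band around the boundary column; these should produce enough well-separated unit-norm elements in the tangent cone to push $\GW$ past $n^{1/4}$, matching the upper bound $\GW \lesssim n^{1/4}\,\log^{O(1)}\!n$ that is implicit in Theorem~\ref{thm:adap}.

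The principal technical obstacle is the extreme rigidity of the tangent cone. Most ``natural'' perturbations of $\theta^*$ (shifting the interface on a subset of rows, inserting a localized spike near one boundary edge, or thickening the transition into several neighbouring columns) turn out to incur a vertical-TV cost that \emph{exactly} cancels the extra boundary budget they try to create, so the candidate direction collapses to the single constant step-shrinking ray. Assembling a genuinely $n^{1/2}$-dimensional pool of admissible, well-separated directions -- enough to force Gaussian width $n^{1/4}$ via Sudakov minoration -- therefore requires careful combinatorial choices, and will be the most delicate part of the argument.
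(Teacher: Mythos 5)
There is a genuine gap here: your proposal does not address the statement it is supposed to prove. The statement is the general low-noise identity of Oymak--Hassibi, valid for an \emph{arbitrary} $\theta^*$, and your very first step is to \emph{invoke} Theorem~\ref{hassibithm}; what you then sketch is a lower bound on $\GW(T_{K(V^*)}(\theta^*)\cap B_{n,n}(1))$ for the specific two-block matrix, which is the content of Lemma~\ref{lem:gwlbd} and Theorem~\ref{thm:lbd}, a different result. Relative to the statement under review the argument is circular, and no construction of elements inside a tangent cone can establish it, since one must relate $\lim_{\sigma\to 0}\sigma^{-2}MSE(\hat{\theta}_{V^*},\theta^*)$ to a geometric functional of the cone in the first place. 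The paper's own treatment is a citation-plus-translation argument: Theorem~3.1 of \cite{oymak2013sharp} gives $\lim_{\sigma\to 0}\sigma^{-2}MSE(\hat{\theta}_{V^*},\theta^*)=\E\,\mathrm{Dist}^2\big(Z,\mathrm{Polar}(T_{K(V^*)}(\theta^*))\big)$; the conic Pythagoras identity gives $\mathrm{Dist}^2(Z,\mathrm{Polar}(C))=\|\Pi_C(Z)\|^2$; the identity $\|\Pi_C(Z)\|=\sup_{\theta\in C,\,\|\theta\|\leq 1}\langle Z,\theta\rangle$ (Lemma~A.3 of \cite{chatterjee2019adaptive}) identifies this with the supremum defining the Gaussian width; and $\E X^2\geq(\E X)^2$ then yields the bound by $\tfrac{1}{N}\big(\GW(T_{K(V^*)}(\theta^*)\cap B_{n,n}(1))\big)^2$ from below, which is the form actually needed for Theorem~\ref{thm:lbd}. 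None of these ingredients appears in your proposal.

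Even read as an attempt at Lemma~\ref{lem:gwlbd}, the argument is incomplete exactly where it matters. Your row-modulated family $h_a(i,j)=a_i\,g(j)$ is too rigid: the membership condition $\TV(a)\leq\tfrac{2}{n}\sum_i a_i$ together with $\|h_a\|\leq 1$ (so $\|a\|\leq n^{-1/2}$ and hence $\sum_i a_i\leq 1$) forces $\TV(a)\leq 2/n$, and a short Abel-summation computation shows this family contributes Gaussian width of constant order, not $n^{1/4}$. The step that would actually produce $n^{1/4}$ --- the ``multiscale combinatorial collection'' fed into a Sudakov minoration --- is only announced, and you yourself flag it as the delicate, unexecuted part. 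For comparison, the paper's proof of Lemma~\ref{lem:gwlbd} needs no Sudakov argument: it fixes the first $n/2-1$ columns at $c_1/n$, sets the last $n/2$ columns to zero, and lets the $n/2$-th column be constant on $\sqrt{n}$ contiguous blocks of length $\sqrt{n}$, taking the value $c_2/\sqrt{n}$ or $c_1/n$ according to the sign of the corresponding block sum of $Z[\cdot,n/2]$; since these block sums are independent Gaussians with standard deviation $n^{1/4}$, a direct computation of $\E\langle\nu,Z\rangle$ gives the $c\,n^{1/4}$ bound once tangent-cone membership and $\|\nu\|\leq 1$ are checked for $c_2\leq\min\{c_1,\sqrt{1-c_1^2}\}$. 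In any case, that lemma is not the statement you were asked to prove.
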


\begin{remark}
	To clarify, Theorem $2.1$ in~\cite{oymak2013sharp} actually says that 
	\begin{align*}
	\lim_{\sigma \rightarrow 0} \frac{1}{\sigma^2} {\rm MSE}(\hat{\theta}_{\mb V^*},\theta^*) = \E \,\mathrm{dist}^2(Z,\mathrm{Polar}(T_{K^*}(\theta^*)))\,.
	\end{align*}
	Here $Z$, as usual, refers to a matrix of independent $N(0,1)$ entries, $\mathrm{Polar}(T_{K^*}(\theta^*))$ refers to the Polar Cone of ~$T_{{K}^*}(\theta^*)$ and $\mathrm{dist}$ refers to the Euclidean Distance between 
	two sets. Letting $K$ denote a general cone and $\Pi_K$ denote the Euclidean projection operator onto $K$, the standard Pythagorean Theorem for cones implies $$\mathrm{dist}^2(Z,\mathrm{Polar}(K)) = 
	\|\Pi_{K}(Z)\|^2\,.$$ Also, it holds that $\|\Pi_{K}(Z)\| = \sup_{\theta \in K: \|\theta\| \leq 1} \langle Z, \theta \rangle$. A proof of the above fact is available in Lemma $A.3$ in~\cite{chatterjee2019adaptive}. 
	Theorem~\ref{hassibithm} now follows from applying the above facts to Theorem~$2.1$ in~\cite{oymak2013sharp} and then using the elementary inequality $\E X^2 \geq (\E X)^2.$ 
\end{remark}

In light of the above facts and armed with Proposition~\ref{prop:gwupbd} and Proposition~\ref{prop:gwlbd} we are now ready to prove Theorem~\ref{thm:adap} and Theorem~\ref{thm:lbd} respectively.

\begin{proof}[Proof of Theorem~\ref{thm:adap}]
Theorem~\ref{bellecthm} along with~\eqref{gast} gives us
\begin{equation}\label{belma.eq}
{\rm MSE}(\hat{\theta}_{V}, \theta^*) \leq \inf_{\theta \in K(V)} \left[\frac{1}{N}
\|\theta - \theta^*\|^2 + \frac{\sigma^2}{N} + \frac{\sigma^2}{N}
\big[\GW(T_{K(V)}(\theta))\big]^2\right]. 
\end{equation}

With $V^* = \TV(\theta^*) > 0$, define $$\theta \coloneqq \overline{\theta^*} \textbf{1} + \frac{V}{V^*} \big(\theta^* - \overline{\theta^*} \textbf{1}).$$
By definition, $\TV(\theta) = V$ and $\theta$ is piecewise constant on the same partition of $L_n$ as is $\theta^*$. 
By Proposition~\ref{prop:gagliardo} we can assert that 
\begin{equation*}
\|\theta - \theta^*\|^2 = (V - V^*)^2 \frac{\|\theta^* - \overline{\theta^*} \textbf{1}\|^2}{(V^*)^2} \leq 9 (V - V^*)^2.
\end{equation*}
Therefore, in view of~\eqref{belma.eq}, we obtain
\begin{equation*}
{\rm MSE}(\hat{\theta}_{V}, \theta^*)  \leq \frac{9}{N} (V - V^*)^2 + \frac{\sigma^2}{N} + \frac{\sigma^2}{N} C (\log en)^9 k(\theta^*)^{5/4} N^{1/4}
\end{equation*}
where we have also used Proposition~\ref{prop:gwupbd} and the fact that $k(\theta) = k(\theta^*).$ 
\end{proof}

\begin{proof}[Proof of Theorem~\ref{thm:lbd}]
The proof of Theorem~\ref{thm:lbd} is immediate once we use Theorem~\ref{hassibithm} along with Proposition~\ref{prop:gwlbd}. 
\end{proof}

\section{Proofs of Proposition~\ref{prop:gwlbd} and Theorem~\ref{thm:impos}}\label{Sec:gwlb}

\subsection{Tangent Cone Characterization}
We fix a $\theta^* \in \R^{n \times n}$ and proceed to investigate the tangent cone  $T_{{K}^*}(\theta^*).$ Notice that $K^*$ is same as $K(V^*)$ defined in Section~\ref{sec:gauss_width_bnd_results} (see \eqref{def:KnV}). Let $\mathcal R^{*} = (R_1^*,R_2^*,\dots,$ $R_{k^*}^*)$ be a partition of $[n] \times [n]$ into $k^*$ rectangles where $k^* = k(\theta^*).$

Recall that the vertices in the grid graph $L_n$ correspond to the pairs $(i, j) \in [n] \times [n]$ and its edge set $E_n$ consists of:
$$\mbox{all }((i, j), (k, \ell)) \in L_n \times L_n \mbox{ such that } |i - j| + |k-\ell| = 1\,.$$
For any edge $e \in E_n$, we denote by $e^{+}$ and $e^{-}$ the vertices associated with $e$ with respect to the 
natural partial order. For any $\theta \in \R ^{L_n}$, we will use $\Delta_e\theta$ as a shorthand notation for 
the (discrete) \emph{edge gradient} $\theta(e^+) - \theta(e^-)$. Thus $\TV(\theta) = \sum_{e \in E_n} 
|\Delta_e \theta|$. For a general rectangle $R:=  ([a_1,a_2] \times [b_1,b_2]) \cap \Z^2 \subset L_n$, we define its right boundary as follows:
\begin{equation*}
\rb (R) := \{(i,j) \in R: j = b_2\}\,.
\end{equation*}
While defining the above set, we are using the matrix convention for indexing the vertices of $L_n$. Thus, the top-left vertex in the two-dimensional array $L_n$ is indexed by $(1,1)$ and the bottom-right vertex by $(n,n)$. Similarly we define the left, top and bottom boundaries of $R$ and denote them by $\lb(R)$, $\tb(R)$ and $\bb(R)$ respectively.  The boundary of $R$, denoted by $\partial R$, is defined as 
\begin{equation*}
\partial R := \rb (R) \cup \lb(R) \cup \tb(R) \cup \bb(R)\,.
\end{equation*}
\subsubsection{Starting from the definition}
The tangent cone $T_{{K}(V^*)}(\theta^*)$ is the smallest closed, convex cone containing all the elements $\theta$ in $\R^{n \times n}$ such that $\theta^* + \theta \in 
{K}(V^*)$ for $V^* = \TV(\theta^*)$. Let $A^* \coloneqq\{e \in E_n: 
|\Delta_e\theta^*| > 0\}$ and $(A^*)^{c} = E_n \setminus A.$ Observe that $|\Delta_e (\theta^* + \theta)| -  |\Delta_e 
(\theta^*)|= |\Delta_e \theta| - 0 = |\Delta_e\theta|$ for every edge $e$ in 
$(A^*)^{c}$. Thus in order for $\theta^* + \theta \in {K}(V^*)$, the increments in the absolute edge gradients of $\theta^* + \theta$ from the edges in $(A^*)^{c}$ must be compensated by an equal or greater amount of decrease in the absolute edge gradients for the 
edges in $A^*$. The precise statement is the content of

\begin{lemma}\label{lem:tanconechar}
	We have the following set equality:
	\begin{equation}\label{eq:tanconechar}
	T_{{K}(V^*)}(\theta^*) =  \big\{\theta \in \R^{n \times n}: \sum_{e \in (A^*)^c} |\Delta_e\theta| \leq - \sum_{e \in A^*} sgn( \Delta_e \theta^*)\Delta_e\theta \big\}\,.
	\end{equation}
	Here, $sgn(x) \coloneqq \I\{x > 0\} - \I\{x < 0\}$ is the usual sign function.  
\end{lemma}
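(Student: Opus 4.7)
The plan is to verify the identity by carefully linearizing $\TV$ around $\theta^*$ and then exploiting convexity of $K(V^*)$ to bypass the closure in the definition of the tangent cone. Let me denote the right-hand side of~\eqref{eq:tanconechar} by $C$. First I would observe that $C$ is a closed convex cone: it is the sublevel set $\{f \le 0\}$ of the function
\begin{equation*}
f(\theta) \;=\; \sum_{e \in A^c} |\Delta_e\theta| \;+\; \sum_{e \in A} \mathrm{sgn}(\Delta_e\theta^*)\,\Delta_e\theta,
\end{equation*}
which is continuous, convex, and positively homogeneous of degree one. This already eliminates the closure operation in the definition of $T_{K(V^*)}(\theta^*)$; it will suffice to show $\mathrm{Cone}(\{\theta : \theta^* + \theta \in K(V^*)\}) = C$.

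The central technical ingredient is a clean linearization of $\TV(\theta^* + t\theta)$ for small $t > 0$. Since $A$ is a finite edge set and $|\Delta_e \theta^*| > 0$ for every $e \in A$, for any fixed $\theta$ one can choose $t_0 = t_0(\theta, \theta^*) > 0$ small enough that for every $0 < t \le t_0$ and every $e \in A$ the sign of $\Delta_e\theta^* + t\,\Delta_e\theta$ coincides with $\mathrm{sgn}(\Delta_e\theta^*)$. For such $t$ a case-by-case check in the two sign possibilities gives $|\Delta_e(\theta^* + t\theta)| = |\Delta_e\theta^*| + t\,\mathrm{sgn}(\Delta_e\theta^*)\,\Delta_e\theta$ for $e \in A$, while $|\Delta_e(\theta^* + t\theta)| = t|\Delta_e\theta|$ for $e \in A^c$. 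Summing, $\TV(\theta^* + t\theta) - V^* = t\,f(\theta)$, so $\theta^* + t\theta \in K(V^*)$ for some (equivalently, all sufficiently small) $t > 0$ if and only if $f(\theta) \le 0$, i.e., $\theta \in C$.

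With this equivalence in hand, both inclusions follow quickly. If $\theta \in C$, the linearization directly gives $\theta^* + t\theta \in K(V^*)$ for small $t > 0$, so $\theta$ lies in the cone generated by $\{v : \theta^* + v \in K(V^*)\}$. Conversely, if $\theta = \alpha v$ with $\alpha \ge 0$ and $\theta^* + v \in K(V^*)$, then by convexity of $K(V^*)$ and the fact that $\theta^* \in K(V^*)$, we have $\theta^* + tv \in K(V^*)$ for every $t \in [0,1]$; applying the linearization at small $t$ yields $f(v) \le 0$, i.e., $v \in C$, and hence $\theta = \alpha v \in C$ because $C$ is a cone. Combining these two directions with the fact that $C$ is closed gives $T_{K(V^*)}(\theta^*) = \overline{\mathrm{Cone}(\{\theta : \theta^* + \theta \in K(V^*)\})} = C$.

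The only nontrivial step is the sign-preservation argument underlying the linearization, and this is really just a finite minimization: the threshold $t_0$ can be taken as $\min_{e \in A : \Delta_e \theta \neq 0} |\Delta_e\theta^*|/|\Delta_e\theta|$. Other than keeping careful track of signs when rewriting $|\Delta_e\theta^*| + t\,\mathrm{sgn}(\Delta_e\theta^*)\,\Delta_e\theta$ in the two cases $\Delta_e\theta^* > 0$ and $\Delta_e\theta^* < 0$, I do not anticipate any real obstacle; the remainder is a routine chain of set-theoretic manipulations enabled by closedness and convexity of both $C$ and $K(V^*)$.
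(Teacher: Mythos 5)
Your proposal is correct and follows essentially the same route as the paper: both arguments hinge on the sign-preservation linearization $\TV(\theta^* + t\theta) = V^* + t\big(\sum_{e\in A^c}|\Delta_e\theta| + \sum_{e\in A}\mathrm{sgn}(\Delta_e\theta^*)\Delta_e\theta\big)$ for small $t>0$, combined with the observation that the right-hand side is a closed convex cone and that convexity of $K(V^*)$ reduces membership to arbitrarily small steps from $\theta^*$. Your treatment is marginally more explicit about absorbing the $\mathrm{Cone}$ and $\mathrm{Closure}$ operations, but the substance is identical.
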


\begin{proof}
	Let $T$ be the set on the right side of~\eqref{eq:tanconechar}. Let us first prove that $T_{{K}(V^*)}(\theta^*) \subset T$. An important feature of $T$ is that it is a closed convex cone. Hence it suffices to show that $\theta \in T$ whenever $\theta^* + \theta 
	\in {K}(V^*)$. To this end let $\theta$ be such that $\TV(\theta^* + \theta) \leq \TV(\theta^*)$. Since ${K}(V^*)$ is a convex set, we have 
	$$\TV(\theta^* + c \theta) = \TV \big(c(\theta^* + \theta) + (1 - c) \theta^*\big) \leq \TV(\theta^*)$$
	for any $0 \leq c \leq 1$. Now observing that
	\begin{equation*}
	\TV(\theta^* + c \theta) = \sum_{e \in A^*} |\Delta_e\theta^* + c \Delta_e\theta|  + c \sum_{e \in (A^*)^c} |\Delta_e\theta|\,,
	\end{equation*} 
	we can write
	\begin{align}
	\label{eq:tanconechar_expand}
	\TV(\theta^* + c \theta) = \sum_{e \in A^*} \big[sgn( \Delta_e \theta^*)\Delta_e\theta^* + c \, sgn( \Delta_e \theta^*) \Delta_e\theta\big]  + c \sum_{e \in (A^*)^c} |\Delta_e\theta| \leq \TV(\theta^*)\,
	\end{align} 
	whenever $c$ is small enough satisfying $sgn (\Delta_e \theta^* + c\Delta_e\theta) =  sgn (\Delta_e \theta^*)$ for all $e \in A^*$. By definition, 
	$$\TV(\theta^*) = \sum_{e \in A^*} sgn( \Delta_e \theta^*)\Delta_e\theta^*$$
	which together with \eqref{eq:tanconechar_expand} gives us $\theta \in T$. 

	It remains to show that $T \subset T_{{K}(V^*)}(\theta^*).$ It suffices to show that for any $\theta \in T$ there exists a small enough $c > 0$ such that $\TV(\theta^* + c \theta) \leq V^*$. This can be shown using the same reasoning given after~\eqref{eq:tanconechar_expand}.
\end{proof}

With the above characterization of the tangent cone, we are now ready to prove our lower bound to the risk given in Theorem~\ref{thm:lbd}.

\subsection{Proof of Proposition~\ref{prop:gwlbd}}
Recall that here we consider $\theta^*$ which is piecewise constant on two rectangles and is of the following form:
\[
\theta^{*} =
\left[ {\begin{array}{cc}
	\mb 0_{n \times n/2} & \mb 1_{n \times n/2} \\
	\end{array} } \right]
\]

\begin{proof}
	Consider $n$ to be even and a perfect square (i.e., $\sqrt{n}$ is an integer) for simplicity 
	of exposition. Also for a generic $n \times n$ matrix $\theta$ we will denote $\theta^{(1)}$ to be the submatrix formed by the first $n/2 - 1$ columns, $v^{(\theta)}$ to be the $n/2$-th 
	column and $\theta^{(2)}$ to be the submatrix formed by the last $n/2$ columns. Also, for two matrices $\theta$ and $\theta^{'}$ with the same number of rows, we will denote $[\theta : 
	\theta^{'}]$ to be the matrix obtained by concatenating the columns of $\theta$ and $\theta^{'}$.

	We can  now use Lemma~\ref{lem:tanconechar} to characterize the tangent cone $T_{{K}(V^*)}(\theta^*)$.
	\begin{equation*}
	T_{{K}(V^*)}(\theta^*) = \big\{\theta \in \R^{n \times n}: \TV([\theta^{(1)}: v^{(\theta)}]) + \TV(\theta^{(2)}) \leq \sum_{i = 1}^{n} \theta[i,n/2] - \theta[i,n/2 + 1]\big\}
	\end{equation*}
	In this proof, we will actually lower bound the Gaussian width of a convenient subset of $T_{{K}(V^*)}(\theta^*)$. To this end, for constants $c_1, c_2 \in (0, 1)$ to be specified later, let us define 
	$$S \coloneqq \big\{\theta \in T_{{K}(V^*)}(\theta^*): \theta^{(1)} = \frac{c_1}{n} \textbf{1}_{n \times (n/2 - 1)},\:\:\theta^{(2)} = \textbf{0}_{n \times n/2}, \:\:v^{(\theta)} \in \{c_1 / n, c_2 / \sqrt{n}\}^{n}\big\}\,.$$ 
	In words, for $\theta \in S$, the first $n/2 - 1$ columns are all equal to $c_1/n$, the last $n/2$ columns of $\theta$ are $0$ and the entries in the $n/2$-th column can take 
	two values; either $c_2/\sqrt{n}$ or $c_1/n$. Also, for any {\em such} matrix $\theta$, 
		\begin{equation}\label{eq:reqd}
	\TV([\theta^{(1)}:v^{(\theta)}]) \leq \sum_{i = 1}^{n} v^{(\theta)}_i \iff \theta \in S.
	\end{equation}

	Before going further, let us define the set of indices $B_j \coloneqq \{(j - 1)\sqrt{n} + 1, \,(j - 1)\sqrt{n} + 2,\dots,j\:\sqrt{n}\}$ for $j \in [\sqrt{n}].$ In words, we divide $[n]$ into $\sqrt{n}$ many equal contiguous blocks and $B_j$ refers to the $j$th 
	block. Now, for any realization of a random Gaussian matrix $Z$, let us define the 
	matrix $\nu$ so that $\nu^{(1)} \coloneqq \frac{c_1}{n} \textbf{1}_{n \times (n/2 - 1)}$ 
	and $\nu^{(2)} \coloneqq \mb 0_{n \times n/2}$. Moreover, we define $v^{(\nu)}$ as follows: 
		\begin{equation*}
	v^{(\nu)}_{i} \coloneqq \sum_{j \in [\sqrt{n}]: B_j \ni i}  \big(\,\I\{\mbox{$\sum_{k \in B_j} Z[k,n/2] > 0$}\} \frac{c_2}{\sqrt{n}} + \I\{\mbox{$\sum_{k \in B_j} Z[k,n/2] < 0$}\} \frac{c_1}{n}\,\big).
	\end{equation*}

	In words, the vector $v^{(\nu)}$ is defined so that it is constant on each of the blocks $B_j$. If $\sum_{i \in B_j} Z[i,n/2] > 0$, the value on $B_j$ is $\frac{c_2}{\sqrt{n}}$, otherwise the value is $\frac{c_1}{n}$. Now we claim that the following are true for some appropriate choice of $c_1$ and $c_2$:
	
	a) $\nu \in S$ for any $Z.$ 
	
	b) $\|\nu\| \leq 1.$

	Taking the above claims to be true we can write
	\begin{align*}
	&\GW \big(T_{{K}(V^*)}(\theta^*) \cap B_{n,n}(1)\big) \geq \GW \big(S \cap B_{n,n}(1)\big) = \E \sup_{\theta \in S \cap B_{n,n}(1)} \langle \theta, Z \rangle \geq \E \langle \nu, Z \rangle  \\=&\, \E \langle \nu^{(1)}, Z^{(1)} \rangle + \E \langle \nu^{(2)}, Z^{(2)} \rangle + \E \sum_{i = 1}^{n} v^{(\nu)}_i Z[i,n/2] = \E \sum_{i = 1}^{n} v^{(\nu)}_i Z[i,n/2]
	\end{align*}
	where we used the fact that $\nu^{(1)},\nu^{(2)}$ are constant matrices and $Z$ has mean zero entries.

	Now let us denote $\mathcal{Z}_j \coloneqq \sum_{i \in B_j} Z[i,n/2]$. Note that $(\mathcal{Z}_1,\dots,\mathcal{Z}_{\sqrt{n}})$ are independent mean zero Gaussians with standard deviation $n^{1/4}$. Therefore
	\begin{align*}
	&\E \sum_{i = 1}^{n} v^{(\nu)}_i Z[i,n/2] = \sum_{j \in [\sqrt{n}]} \E \big(\I\{\mathcal{Z}_j > 0\} \mathcal{Z}_j \frac{c_2}{\sqrt{n}} + \I\{\mathcal{Z}_j < 0\} \mathcal{Z}_j \frac{c_1}{n}\big) \\
	=&\, \sum_{j \in [\sqrt{n}]} (\frac{c_2}{\sqrt{n}} - \frac{c_1}{n}) n^{1/4}\phi = (c_2 - \frac{c_1}{\sqrt{n}})\phi n^{1/4}\,
	\end{align*}
	where for a standard Gaussian random variable $z$, we denote $\phi = \E\, z \I\{z > 0\}.$ 

	It remains to choose $c_1, c_2$ so that the two claims hold as well as $c_2 - \tfrac {c_1}{\sqrt{n}}$ is positive. To this end notice that for validating the first claim it suffices to show, in view of the definition of $\nu$, that the first inequality in \eqref{eq:reqd} holds for $\nu$, i.e., the following is true
	\begin{equation}\label{eq:show}
	\TV([\nu^{(1)}:v^{\nu}]) \leq \sum_{i = 1}^{n} v^{\nu}_i\,.
	\end{equation} 
	Now entries of $v^{\nu}$ can take two values, either $\frac{c_2}{\sqrt{n}}$ or $\frac{c_1}{n}.$ In either case it can be checked that when $c_2 \ge \tfrac{c_1}{\sqrt{n}}$ we have for each row index $i \in [n]$ 
	\begin{equation}
	\label{eq:vinu_minus_tv}
	v^{\nu}_i - \TV(\nu^{(1)}[i,1],\dots,\nu^{(1)}[i,n - 1],v^{\nu}_i) = \frac{c_1}{n}.
	\end{equation}
	Along with the fact that $$\TV([\nu^{(1)}:v^{\nu}]) = \sum_{i = 1}^{n} \TV(\nu^{(1)}[i,1],\dots,\nu^{(1)}[i,n - 1],v^{\nu}_i) + \TV(v^{\nu})\,,$$ \eqref{eq:vinu_minus_tv} implies that in order to verify~\eqref{eq:show} it suffices to show 
	$\TV(v^{\nu}) \leq c_1.$ But $v^{\nu}$ is a piecewise constant vector with at most $\sqrt{n}$ jumps of size 
	$\frac{c_2}{\sqrt{n}}$. Thus we have $\TV(v^{\nu}) \leq c_2$. Hence ensuring $c_2 \leq c_1$ is sufficient to obtain the first claim. The second claim is trivially satisfied if $c_2 \leq \sqrt{1 - c_1^2}.$ Thus, choosing $c_1 = c_2 = 1/\sqrt{2}$ we can satisfy both claims as well as $c_2 - \tfrac{c_1}{\sqrt{n}} = \tfrac{1}{\sqrt{2}}(1 - 1 / \sqrt{n}) > 0$ for all $n \ge 2$.
\end{proof}



The task now is to obtain a ``matching'' upper bound on the gaussian width, which would eventually lead to the proof of Theorem~\ref{thm:adap} in view of 
Theorem~\ref{bellecthm}. Since the proof is lengthy and somewhat technical, for the benefit of the reader we first provide an informal roadmap of the proof before starting it formally.

\subsection{Proof of Theorem~\ref{thm:impos}}

\begin{proof}
	Consider the signal matrix $\theta^* \coloneqq \mathbb I\{i + j > n\}$. From the characterization of the tangent cone given by Lemma~\ref{lem:tanconechar}, we have
	\begin{align*}
	T_{K(V^*)} (\theta^*) = \big\{\theta \in \R^{n \times n}: \sum_{e \in (A^*)^c} |\Delta_e \theta| \le - \sum_{e \in A^*}\Delta_e \theta\big\} \end{align*}
	where every edge $e$ in $A^*$ is either of the form $((i, n-i), (i, 
	n-i+1))$ or $((i, n-i), (i+1, n-i))$ for some $i \in [n-1]$.
	
	Now consider the family $T^*$ of matrices defined below:
	\begin{align*}
	T^* \coloneqq \{\theta \in \R_{+}^{n \times n} : \theta[i, j] = 0 \,\,\forall \,(i, j) \mbox{ satisfying } i + j \neq n\}.
	\end{align*}
	It is not difficult to check that $T^* \subseteq T_{K(V^*)}(\theta^*)$. It is also clear that $T^*$ is (linearly) 
	isomorphic to $\R_{+}^{n-1}$. Therefore 
	\begin{align*}
	\gw(T_{K(V^*)}(\theta^*) \cap B_{n \times n}(1)) \ge \gw(T^* \cap B_{n \times n}(1)) = \gw(\R_{+}^{n - 1} \cap B_{n-1}(1)) \geq c \sqrt{n}.
	\end{align*}
	where $B_{m}(r)$ denotes the usual Euclidean ball of radius $r$ in $\R^m$ and $c > 0$ is a universal constant. Now an application of Theorem~\ref{hassibithm} along with the above Gaussian width lower bound also furnishes a lower bound to the limiting MSE. 
\end{proof}

\begin{remark}\label{rem:impos}
	The vertex boundary of a set $A \subset L_n$ is defined to be the set of vertices 
	which share an edge with $A^c$. Consider the level sets of $\theta^*$ which are 
	the sets $A = \{(i,j) \in L_n: i + j > n\}$ and $A^{c}.$ The simple argument 
	presented in the proof of Theorem~\ref{thm:impos} relies crucially on the fact 
	that the vertex boundary of the level sets $A$ and $A^c$ are not connected in the 
	graph $L_n$. One can now consider other signals of the form $\theta^* = \mathbb 
	I\{A\}$ for a general subset $A \subset L_n$. One can check that if $A$ is of the 
	shape of a circle or a square rotated by $45$ degrees then also the vertex 
	boundary of the level sets will contain $O(n)$ connected components which are 
	singletons. Therefore, a similar argument will give a $O(n)$ lower bound to 
	$\gw(T_{K(V^*)}(\theta^*).$ We believe that it might be possible to formalize the 
	intuition that whenever $A$ is sufficiently far from being a rectangle, 
	$\gw(T_{K(V^*)}(\theta^*)$ is lower bounded by $O(n).$ 
	
\end{remark}

\section{Proof of Proposition~\ref{prop:gwupbd}}\label{Sec:gwup}

\subsection{Informal Roadmap}
The proof of Proposition~\ref{prop:gwupbd} can be divided into three major steps which we now describe. Recall that $\theta^*$ is the true signal which is piecewise constant on axis aligned rectangles $R_1^*,\dots,R_{k(\theta^*)}^*$ which partition $L_n.$

\textbf{Step 1}: We have to bound $\GW(T_{{K}(V^*)} (\theta^*) \cap B_{n \times n}(1)).$ To do this, we show that if a matrix $\theta$ is in $T_{{K}(V^*)} (\theta^*) \cap B_{n \times 
n}(1)$ then each rectangular submatrix  $\theta_{R_i^*}$ satisfies the property that 
$\TV(\theta_{R_i^*})$ is at most the $\ell_1$ norm of its four boundaries plus a small wiggle 
room $\delta > 0.$ Such matrices are denoted later in~\eqref{def:fbr} as $\M^{4}.$ This 
fact then reduces our problem to bounding the Gaussian width for the class of matrices 
$\M^{4}.$ Corollary~\ref{cor:twoways}, Lemma~\ref{lem:imp} and Lemma~\ref{lem:imp2} are part of this step. 

\smallskip

\textbf{Step 2}: Before starting the Gaussian width calculations, we found it convenient to further simplify the class of matrices $\M^{4}.$ In this step, we show that if a matrix $\theta$ lies in $\M^{4}$ then we can subdivide it further into several submatrices which now satisfy a simpler property. The property is that the total variation of these submatrices are at most the $\ell_1$ norm of only one or none of its boundaries (instead of four) plus an appropriately small ``wiggle room'' $\delta > 0.$ These sets of matrices are denoted by $\M^{1}$ and $\M^{0}$ respectively and are defined just before Lemma~\ref{lem:gw4to1}. Along with Lemma~\ref{lem:gw4to1},
Lemmata~\ref{lem:finalparti}--\ref{lem:2to1} are also parts of this step. 

\smallskip

\textbf{Step 3}: This is the step where we actually compute the metric entropies of the classes of matrices $\M^{1}$ and $\M^{0}$ and finally 
bring all the pieces together. Proposition~\ref{prop:gwonebdry} and Lemmata~\ref{lem:gwnobdry}--\ref{lem:division2} are all parts of this step. 


\subsection{Towards simplifying the tangent cone} \label{subsec:tanconesimplify1}

We first want to split $\theta$ into submatrices each of which satisfies a separate constraint. This and the next subsection are devoted to this goal.
Let us revisit Lemma~\ref{lem:tanconechar}. Since $\theta^*$ is constant on each rectangle $R_i^* \in \mathcal R^*$ it follows that
$$A^* = \{e \in E_n: e^+ \in R_i^* \mbox{ and }e^- \in R_j^* \mbox{ for some }i \neq j\in [k^*]\}\,.$$
As a consequence we get the following corollary:
\begin{corollary}
	\label{cor:twoways}
	Fix $\theta^* \in \R^{n \times n}$. We have
	\begin{align*}
	T_{ K(V^*)}(\theta^*) \subset \Big\{&\theta \in \R^{n \times n}: \sum_{i \in [k^*]}\TV(\theta_{R_i^*}) \leq \sum_{i \in [k^*]} \sum_{u \in \partial R_i^*} |\theta(u)|\Big\}\,.
	\end{align*}
\end{corollary}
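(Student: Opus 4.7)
The plan is to specialize Lemma~\ref{lem:tanconechar} to the piecewise constant structure of $\theta^*$ on the rectangular partition $R^* = (R_1, \ldots, R_k)$.

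The key observation is that, because $\theta^*$ is constant on each rectangle $R_i$, no edge having both its endpoints inside a common $R_i$ can lie in $A = \{e \in E_n : \Delta_e\theta^* \neq 0\}$. Equivalently, every edge in $A$ straddles two different rectangles of $R^*$, while $A^c$ contains all edges internal to each $R_i$. Given $\theta \in T_{K(V^*)}(\theta^*)$, I would therefore split both sides of the defining inequality in Lemma~\ref{lem:tanconechar} along this partition.

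On the left, grouping internal edges rectangle-by-rectangle gives
\begin{equation*}
\sum_{i \in [k]} \TV(\theta_{R_i}) \;\leq\; \sum_{e \in A^c} |\Delta_e\theta|\,.
\end{equation*}
On the right, I would use the pointwise bound $-sgn(\Delta_e\theta^*)\,\Delta_e\theta \leq |\Delta_e\theta| \leq |\theta(e^+)| + |\theta(e^-)|$ together with the fact that every endpoint of any $e \in A$ must lie on the boundary of its own (unique) containing rectangle, yielding
\begin{equation*}
-\sum_{e \in A} sgn(\Delta_e\theta^*)\,\Delta_e\theta \;\leq\; \sum_{e \in A}\bigl(|\theta(e^+)| + |\theta(e^-)|\bigr) \;\leq\; \sum_{i \in [k]} \sum_{u \in \partial R_i} |\theta(u)|\,,
\end{equation*}
where the last step rests on the elementary degree-count that each boundary vertex is incident to at most a bounded (grid) number of edges in $A$. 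Chaining these two estimates through the inequality supplied by Lemma~\ref{lem:tanconechar} then delivers the stated containment.

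The argument is essentially combinatorial book-keeping with no deep obstacle; the only point requiring a little care is the degree count at corner boundary vertices, which is immediate from the grid geometry and can at worst incur an absolute multiplicative constant that is harmlessly absorbed in the subsequent Gaussian-width estimates that use this corollary.
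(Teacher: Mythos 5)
Your route is exactly the one the paper intends (the corollary is stated without proof, immediately after the observation that every edge of $A$ joins two distinct rectangles): the internal edges of the $R_i$ all lie in $A^c$, giving $\sum_i \TV(\theta_{R_i}) \le \sum_{e\in A^c}|\Delta_e\theta|$, and the right-hand side of Lemma~\ref{lem:tanconechar} is bounded by $\sum_{e\in A}\big(|\theta(e^+)|+|\theta(e^-)|\big)$, whose terms are then charged to boundary vertices. The problem is your last step. What the charging actually yields is $\sum_{i}\sum_{u\in R_i} d_A(u)\,|\theta(u)|$, where $d_A(u)$ is the number of $A$-edges incident to $u$, and $d_A(u)$ can equal $2$ at a corner of an interior rectangle (and as much as $3$ or $4$ for one-row, one-column or singleton rectangles). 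Absorbing this into ``a multiplicative constant'' proves a strictly weaker inequality than the one stated, and the stated inequality with $\partial R_i$ read as a plain set is in fact false: take $n=4$, $\theta^*$ the $2\times 2$-block matrix with block values $0,1,2,3$, and $\theta$ supported on the two central entries $\theta[2,2]=1$, $\theta[3,3]=-1$. One checks from Lemma~\ref{lem:tanconechar} that $\theta$ belongs to the tangent cone (both sides of the defining inequality equal $4$), yet $\sum_i\TV(\theta_{R_i})=4$ while $\sum_i\sum_{u\in\partial R_i}|\theta(u)|=2$ if each corner is counted once.

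The statement is true --- and your argument closes with no constant at all --- once $\sum_{u\in\partial R_i}|\theta(u)|$ is read as the sum over the four sides $\rb(R_i),\lb(R_i),\tb(R_i),\bb(R_i)$ taken separately, i.e.\ with each vertex counted once per side it lies on; this is the reading the paper actually uses downstream (the proof of Lemma~\ref{lem:imp} and the definition of $\M^{\fbr}$ both work with $\norm{\theta_\lt}_1+\norm{\theta_\rt}_1+\norm{\theta_\tp}_1+\norm{\theta_\bt}_1$). The exact degree count is then: an $A$-edge can leave $u\in R_i$ in a given direction only if $u$ lies on the corresponding side of $R_i$, so $d_A(u)$ never exceeds the number of sides of $R_i$ containing $u$. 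Replace your ``bounded degree, absorb a constant'' sentence with this observation and the proof is complete.
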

The first step towards obtaining a decomposition where each submatrix satisfies some constraint is to separate the constraints for $R_i^*$'s. More precisely we would like 
\begin{equation}\label{eq:tancone_separation}
\TV(\theta_{R_i^*}) \leq \sum_{u \in \partial R_i^*} |\theta(u)|
\end{equation}
for each $i \in [k^*]$. As we will see below that this is ``almost'' the truth when we consider matrices in the tangent cone which are of unit norm.

Let us make precise the notion of an ``almost'' version of 
\eqref{eq:tancone_separation}. To this end we introduce for any $\delta, \, t > 0$:
\begin{align}\label{def:fbr}
\M^{4}(m',n',\delta,t) := \{\theta \in \R^{m' \times n'}: \TV(\theta) \leq &\norm{\theta_\lt}_1 + \norm{\theta_\rt}_1 + \norm{\theta_\tp}_1 \nonumber \\ &+ \norm{\theta_\bt}_1  + \delta, \,\norm{\theta} \leq t\}\,,
\end{align}
where $\theta_\lt \coloneqq \theta[\:,\:1]$, $\theta_\rt \coloneqq \theta[\:,\:n']$, $\theta_\tp \coloneqq \theta[1,\:]$ and 
$\theta_\bt \coloneqq \theta[m',\:]$. In plain words, $\M^{4}(m',n',\delta,t)$ consists of matrices of norm at most $t$ whose total variation is bounded by the total $\ell_1$ norm of its four boundaries plus an extra 
\textit{wiggle room} $\delta > 0$. In our next result we show that for any $\theta$ in $T_{ K(V^*)}(\theta^*)$ intersected with the unit Euclidean ball $B_{n \times n}(1)$, the restriction $\theta_{|R_i^*}$ of $\theta$ to $R_i^*$ lies in $\M^{4}(m_i, n_i,\delta_i,t_i)$ for each $i \in [k]$ with $m_i := \nr(R_i^*)$, $n_i := \nc(R_i^*)$ and $t_i$'s and $\delta_i$'s satisfying some upper bounds on their $\ell_2$ and $\ell_1$-norms respectively.
\begin{lemma}
	\label{lem:imp}
	We have the set inclusion
	\begin{align*}
	T_{ K(V^*)}(\theta^*) \cap B_{n \times n}(1) \subset \bigcup_{\bm \delta \in S_{k^*,\Delta(\theta^*)}} \bigcup_{\bm {t^2} \in S_{k^*,1}} \{\theta \in \R^{n \times n}: \theta_{|R_i^*} \in \M^{4}(m_i,n_i,\delta_i,t_i), \:\:\forall i \in [k^*]\}
	\end{align*}
	where $S_{k^*,r} := \{a \in \R^{k^*}_{+}: \sum_{i \in [k^*]} a_i \leq r\}$ is the non 
	negative simplex with radius $r > 0$, $\bm {t^2}$ is the vector $(t_1^2, \dots, 
	t_{k^*}^2) \in \R_+^{k^*}$ and
	\begin{equation*}
	\Delta(\theta^*) = \sqrt{2 \sum_{i \in [k^*]} \big(\frac{m_i}{n_i} + \frac{n_i}{m_i}\big)}.
	\end{equation*}
\end{lemma}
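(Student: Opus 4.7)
The plan is to exhibit $\bm\delta$ and $\bm{t_2}$ explicitly for each $\theta \in T_{{K}(V^*)}(\theta^*) \cap B_{n\times n}(1)$. I would set
\[
t_i := \|\theta_{R_i}\|, \qquad \delta_i := \max\Big\{0,\, \TV(\theta_{R_i}) - \|\theta_{R_i,\lt}\|_1 - \|\theta_{R_i,\rt}\|_1 - \|\theta_{R_i,\tp}\|_1 - \|\theta_{R_i,\bt}\|_1\Big\}.
\]
With this choice the membership $\theta_{R_i} \in \M^{\fbr}(m_i,n_i,\delta_i,t_i)$ is immediate from the definition of $\M^{\fbr}$, and since the $R_i$'s partition $[n]\times[n]$ the Frobenius norms decouple additively, giving $\sum_i t_i^2 = \|\theta\|^2 \leq 1$, i.e., $\bm{t_2} \in S_{k,1}$.

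The technical core is the inequality $\sum_i\delta_i \leq \Delta(\theta^*)$. Write $f_i$ for the quantity inside the $\max\{0,\cdot\}$ above. By Corollary~\ref{cor:twoways}, $\sum_i\TV(\theta_{R_i}) \leq \sum_i\sum_{u\in\partial R_i}|\theta(u)|$. In the four-side sum $\|\theta_{R_i,\lt}\|_1 + \|\theta_{R_i,\rt}\|_1 + \|\theta_{R_i,\tp}\|_1 + \|\theta_{R_i,\bt}\|_1$ each corner of $R_i$ is counted twice while $\sum_{u\in\partial R_i}|\theta(u)|$ counts it once, so $\sum_i f_i \leq 0$, and therefore $\sum_i\delta_i = \sum_i(f_i)_+ \leq \sum_i (f_i)_-$.

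To bound $\sum_i (f_i)_-$ I would combine the per-rectangle Cauchy--Schwarz $\|\theta_{R_i,\lt}\|_1 \leq \sqrt{m_i}\,\|\theta_{R_i,\lt}\|_2$ (and analogous estimates for the right, top and bottom boundaries) with a weighted Cauchy--Schwarz across the rectangle index $i$. The key structural observation is that the left columns of distinct $R_i$'s form disjoint subsets of $[n]\times[n]$, and likewise for right columns, top rows and bottom rows; consequently $\sum_i\|\theta_{R_i,\lt}\|_2^2 \leq \|\theta\|^2 \leq 1$ (and similarly for the other three sides). Pairing the factors $\sqrt{m_i}$ against suitable factors of $\sqrt{n_i}$ when applying Cauchy--Schwarz across rectangles should manufacture exactly the aspect-ratio quantity $m_i/n_i + n_i/m_i$ present in $\Delta(\theta^*)$.

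The main obstacle is executing this weighted Cauchy--Schwarz with the correct weights. A naive version yields a bound of order $\sqrt{\sum_i(m_i+n_i)}$, which under bounded aspect ratio is $\Theta((kN)^{1/4})$ and strictly worse than the target $\Delta(\theta^*) = \Theta(\sqrt{k})$. Closing this gap will require simultaneously using the tangent-cone inequality $\sum_i f_i \leq 0$, the corner double-count, and a careful pairing of horizontal against vertical boundary contributions so that the square roots of $m_i/n_i$ and $n_i/m_i$ arise symmetrically; this reweighting step is where the precise factor $\sqrt{2\sum_i(m_i/n_i + n_i/m_i)}$ emerges and is the most delicate portion of the proof.
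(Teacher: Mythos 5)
Your choices of $t_i$ and $\delta_i$, the decoupling $\sum_i t_i^2=\|\theta\|^2\leq 1$, and the reduction via $\sum_i f_i\leq 0$ to bounding $\sum_i (f_i)_-=\sum_i\bigl(\textstyle\sum_{c}\|\theta_{R_i,c}\|_1-\TV(\theta_{R_i})\bigr)_+$ are all correct and match the skeleton of the paper's argument (which runs the same bookkeeping through Corollary~\ref{cor:twoways} and Lemma~\ref{lem:simple}). But the technical core — showing $\sum_i(f_i)_-\leq C\,\Delta(\theta^*)$ — is exactly the part you leave open, and the route you sketch for it cannot work. Bounding the boundary $\ell_1$ norms by $\sqrt{m_i}\,\|\theta_{R_i,\lt}\|_2$ and exploiting disjointness of the boundary columns across $i$ gives, as you compute, only $\sqrt{\sum_i m_i}\asymp (kN)^{1/4}$, and no reweighting of that Cauchy--Schwarz can repair it: the boundary columns alone carry no information about the gap between $\sum_u|\theta(u)|$ on $\partial R_i$ and $\TV(\theta_{R_i})$. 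That gap is controlled by the \emph{interior} of $R_i$, and the decisive inequality is one your proposal never writes down.

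The missing idea is the following. For any row $r$ of $R_i$ with endpoints $u\in\lb(R_i)$, $w\in\rb(R_i)$ and any interior vertex $v$ on that row, the triangle inequality gives $\TV(\theta_r)\geq |\theta(u)|+|\theta(w)|-2|\theta(v)|$; summing over rows, for \emph{any} column $c$ of $R_i$,
\begin{equation*}
\TVr(\theta_{R_i}) \;\geq\; \sum_{u\in\lb(R_i)}|\theta(u)|+\sum_{w\in\rb(R_i)}|\theta(w)| \;-\; 2\,\|\theta_{R_i}[\,,c]\|_1\,,
\end{equation*}
and analogously for columns. The freedom to choose $c$ is then cashed in: by Cauchy--Schwarz and ``minimum is at most average,'' $\min_c\|\theta_{R_i}[\,,c]\|_1\leq\sqrt{m_i}\min_c\|\theta_{R_i}[\,,c]\|_2\leq\sqrt{m_i/n_i}\,\|\theta_{R_i}\|$. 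It is this division by $\sqrt{n_i}$ — coming from averaging over the $n_i$ interior columns, not from the boundary — that converts the useless $\sqrt{m_i}$ into the aspect ratio $\sqrt{m_i/n_i}$, after which a single Cauchy--Schwarz over $i$ against $\sum_i\|\theta_{R_i}\|^2\leq 1$ yields $\Delta(\theta^*)$. Combined with Corollary~\ref{cor:twoways} and Lemma~\ref{lem:simple} this closes the argument; without it, your bound on $\sum_i\delta_i$ is off by a factor of order $(N/k)^{1/4}$, which would destroy the $N^{-3/4}$ rate downstream.
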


\begin{remark}
	By virtue of Lemma~\ref{lem:imp}, we achieve our objective of obtaining a characterization of $T_{ K(V^*)}(\theta^*)$ where we have separate constraints for each $R_i^* \in \mathcal R^{*}.$ The constraints are now coupled together by the wiggle room vector $\bm \delta \in  S_{k^*,\Delta(\theta^*)}$ and the (squared) $\ell_2$-norm vector $\bm {t^2}$.
\end{remark}
\begin{proof}
	We will start with a claim. 
	
\begin{claim}\label{claim:lightrow}
Let $\theta \in T_{ K(V^*)}(\theta^*)\, \cap \,B_{n \times n}(1).$ 
		Then for each $i \in [k^*]$ and any fixed choice of rows and columns $r_i,\,c_i$ in 
		$R_i^*$, we have $\theta_{|R_i^*} \in \M^{4}(m_i,n_i,\delta_i,t_i)$ where $\sum_{i \in 
			[k^*]}t_i^2 \leq 1$ and $(\delta_1, \dots, \delta_{k^*}) =: \bm \delta \in \R^{k^*}_{+}$ satisfies  
		$$\norm{\bm \delta}_1 \:\leq\: 2\: \sum_{i \in [k^*]} 
		\Big(\norm{\theta_{|c_i}}_{1} + \norm{\theta_{|r_i}}_{1}\Big)\,,$$
		where $\theta_{|c_i}$ (or $\theta_{|r_i}$) is the vector obtained by restricting $\theta$ to the row $c_i$ (respectively the column $r_i$).
	\end{claim}
	
	Let us first deduce the lemma assuming our claim. Consider a $\theta 
	\in T_{ K(V^*)}(\theta^*)$ such that $\|\theta\|_2 \leq 1$ and for each $i \in [k^*]$, 
	let $r_i$ and $c_i$ denote the rows and columns such that the $\ell_1$ norms of 
	$\theta_{|r_i}$ and $\theta_{|c_i}$ are minimum. Then by 
	Claim~\ref{claim:lightrow}, each $\theta_{|R_i^*} \in \M^{4}(m_i,n_i,\delta_i,t_i)$ with $\bm {t^2} \in S_{k^*, 1}$ and $\bm \delta$ satisfying
	\begin{equation}
	\label{eq:deltaell1bnd}
	\norm{\bm \delta}_1 \:\leq\: 2\:\sum_{i \in [k^*]}	\min_{\substack{c : \, c \:\text{is a column of}\: R_i^*,\\
			r : \, r \:\text{is a row of}\: R_i^*}}  \Big(\norm{\theta_{|c}}_1 + \norm{\theta_{|r}}_1\Big).
	\end{equation}
Now for each $i \in [k^*]$, we have
	\begin{align*}
	\min_{c :\,c \:\text{is a column of}\: R_i^*} \norm{\theta_{|c}}_1 \leq \sqrt{m_i} \min_{c : c \:\text{is a column of}\: R_i^*} \norm{\theta_{|c}}_2 \leq \sqrt{\frac{m_i}{n_i}} \|\theta_{|R_i^*}\|_2\,.
	\end{align*} 
The first inequality is an application of the Cauchy-Schwarz inequality and the second inequality follows from the ``minimum is less than the average'' principle. Similarly, one can obtain the row version of these inequalities and together they give us
	\begin{align*}
	\min_{\substack{c : \, c \:\text{is a column of}\: R_i^*,\\
			r : \, r \:\text{is a row of}\: R_i^*}}  \Big(\norm{\theta_{|c}}_1 + \norm{\theta_{|r}}_1\Big) \le \Big(\sqrt{\frac{m_i}{n_i}} + \sqrt{\frac{n_i}{m_i}}\Big)\|\theta_{|R_i^*}\|_2. 
	\end{align*} 
	Summing the above inequality over all $i \in [k^*]$ and subsequently using the Cauchy-Schwarz inequality as well as the fact that 
	$\|\theta\|_2 \leq 1$, we get in view of \eqref{eq:deftancone}
	\begin{align*}
\norm{\bm \delta}_1\leq \sqrt{2 \sum_{i \in [k^*]} \big(\frac{m_i}{n_i} + \frac{n_i}{m_i}\big)} = \Delta(\theta^*)\,,
	\end{align*}
thus yielding the lemma.
	
	\smallskip
	
{\em Proof of Claim~\ref{claim:lightrow}.}	The constraint on $t_i$'s is clear and therefore all we need to show is the constraint on $\delta_i$'s. Recall from the definition in \eqref{def:fbr} that $\delta_i$ can be chosen, for any $i \in [k^*]$, as
\begin{equation}
\label{eq:delta_bnd}
\delta_i \coloneqq \big(\TV(\theta_{|R_i^*}) - \sum_{u \in \partial R_i^*}|\theta(u)|\big)_+
\end{equation}
where $a_+ \coloneqq \max\{a, 0\}$ for any $a \in \R$. Now fix $i \in [k^*]$ and consider a generic row $r_i$ of $R_i^*$. Treating $r_i$ as a horizontal path in the graph $L_n$, let us denote 
	its two end-vertices by $u$ and $w$ with $u \in \lb(R_i^*)$ and $w \in \rb(R_i^*)$. Now 
	denoting the vertex in $r_i \,\cap\, c_i$ by $v$, we see that $v$ occurs between the 
	vertices $u$ and $w$ in the row $r_i$. 
	Therefore we can write
	\begin{align*}
	\TV(\theta_{r}) \geq |\theta(u)| + |\theta(w)| - 2 |\theta(v)|\,.
	\end{align*} 
	Summing the above inequality for every row in the rectangle $R_i^*$ gives us
	\begin{align*}
\TVr(\theta_{|R_i^*}) \geq \sum_{u \in \lb(R_i^*)}|\theta(u)| + \sum_{w \in \rb(R_i^*)}|\theta(w)| - 2 \norm{\theta_{c_i}}_1.
	\end{align*}
	By a similar argument applied to the columns of $R$ we obtain
	\begin{align*}
	\TVc(\theta_{|R_i^*}) \geq \sum_{u \in \tb(R_i^*)}|\theta(u)| + \sum_{w \in \bb(R_i^*)} |\theta(w)| - 2 \norm{\theta_{r_i}}_1.
	\end{align*}
	Summing the previous two displays we get the following inequality:
	\begin{align*}
	\TV(\theta_{|R_i^*}) \geq \sum_{u \in \partial R_i^*}|\theta(u)| - 2 \norm{\theta_{r_i}}_1 -  2 \norm{\theta_{c_i}}_1.
	\end{align*}
	Now if $\theta \in T_{K(V^*)}(\theta^*)$, then as a consequence of Corollary~\ref{cor:twoways} we also have
	\begin{equation*}
	\sum_{i \in [k^*]} \TV(\theta_{|R_i^*}) \leq \sum_{i \in [k^*]}\sum_{u \in \partial R_i^*}|\theta(u)|\,.
	\end{equation*}
	Hence an application of Lemma~\ref{lem:simple} (stated and proved in the appendix) to $f_i = \TV(\theta_{|R_i^*})$, $g_i = \sum_{u \in \partial R_i^*}|\theta(u)|$, $h_i = 2 (\norm{\theta_{r_i}}_1 + \norm{\theta_{c_i}}_1)$ and $w_i = \delta = 0$, would give us the claim in view of \eqref{eq:delta_bnd}.
\end{proof}
With the help of Lemma~\ref{lem:imp} we can now deduce the following lemma.

\begin{lemma}\label{lem:imp2}
	With the notation described in this section, we have the following upper bound:
	\begin{align*}
	\GW&(T_{K(V^*)}(\theta^{*}) \cap B_{n,n}(1))\\ &\leq\max_{\Delta(\theta^*) \bm \delta: \bm \delta \in S_{k^*,2} \cap H_{k^*}}\: \max_{\bm {t^2} \in S_{k^*,2} \cap H_{k^*}} \:\sum_{i \in [k^*]} \GW(\M^{4}(m_i, n_i,\Delta(\theta^*) \delta_i ,t_i)) + \:\: C \sqrt{k^*}.
	\end{align*}
	where $H_{k^*} \coloneqq \{\frac{1}{k^*},\frac{2}{k^*},\dots,1\}^{k^*}$ and $C > 0$ is a universal constant.
\end{lemma}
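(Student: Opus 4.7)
The plan is to combine Lemma~\ref{lem:imp} with a decoupling argument across the rectangles $R_i$, then discretize the continuous unions over the simplices $S_{k,\Delta(\theta^*)}$ and $S_{k,1}$ into a finite grid on $H_k$, and finally apply Gaussian concentration to absorb the discretization cost into the $\sqrt{k\log n}$ term. Starting from the set inclusion in Lemma~\ref{lem:imp},
\begin{equation*}
\GW(T_{K(V^*)}(\theta^*)\cap B_{n,n}(1)) \leq \E \sup_{\bm\delta \in S_{k,\Delta(\theta^*)}}\sup_{\bm{t_2}\in S_{k,1}}\sup_{\theta:\,\theta_{R_i}\in\M^{\fbr}(m_i,n_i,\delta_i,t_i)\,\forall i}\langle Z,\theta\rangle\,.
\end{equation*}
Because $R_1,\dots,R_k$ partition $[n]\times[n]$ into disjoint blocks, one has the orthogonal decomposition $\langle Z,\theta\rangle=\sum_{i=1}^k\langle Z_{R_i},\theta_{R_i}\rangle$ with $Z_{R_1},\dots,Z_{R_k}$ independent standard Gaussian blocks, and the constraints on the $\theta_{R_i}$'s decouple, so the innermost supremum factors as $\sum_{i=1}^k\sup_{\theta_{R_i}\in\M^{\fbr}(m_i,n_i,\delta_i,t_i)}\langle Z_{R_i},\theta_{R_i}\rangle$.

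Next I would discretize the simplices. A direct check shows that $\M^{\fbr}(m,n,\delta,t)$ is monotone non-decreasing in both $\delta$ and $t$, so rounding each $\delta_i$ up to the nearest element of $\Delta(\theta^*)\cdot\{1/k,2/k,\dots,1\}$ and each $t_i^2$ up to the nearest element of $\{1/k,2/k,\dots,1\}$ only enlarges the corresponding $\M^{\fbr}$ set. The rounding inflates $\sum_i\delta_i$ by at most $\Delta(\theta^*)$ and $\sum_i t_i^2$ by at most $1$, so the rounded vectors lie in $\Delta(\theta^*)\cdot(S_{k,2}\cap H_k)$ and $S_{k,2}\cap H_k$ respectively. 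Hence the supremum over the continuous unions is bounded above by the maximum over the finite product grid
\begin{equation*}
J \coloneqq \bigl(\Delta(\theta^*)\cdot(S_{k,2}\cap H_k)\bigr)\times(S_{k,2}\cap H_k), \qquad \log|J|\leq 2k\log k \leq Ck\log n,
\end{equation*}
using the crude bound $k\leq n^2$.

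Finally I would apply a standard Gaussian maximal inequality. For each $(\bm\delta,\bm{t_2})\in J$, set
\begin{equation*}
X_{\bm\delta,\bm{t_2}} \coloneqq \sum_{i=1}^k\sup_{\theta_{R_i}\in\M^{\fbr}(m_i,n_i,\Delta(\theta^*)\delta_i,t_i)}\langle Z_{R_i},\theta_{R_i}\rangle\,.
\end{equation*}
Since $\M^{\fbr}(m_i,n_i,\Delta(\theta^*)\delta_i,t_i)\subseteq B_{m_i,n_i}(t_i)$, the map $Z\mapsto X_{\bm\delta,\bm{t_2}}$ is Lipschitz with constant at most $\sqrt{\sum_i t_i^2}\leq\sqrt 2$, so by Borell--TIS it is $O(1)$-sub-Gaussian; moreover, independence of the $Z_{R_i}$'s yields $\E X_{\bm\delta,\bm{t_2}}=\sum_{i\in[k]}\GW(\M^{\fbr}(m_i,n_i,\Delta(\theta^*)\delta_i,t_i))$. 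The standard maximal inequality for finitely many sub-Gaussian variables then gives
\begin{equation*}
\E\max_{(\bm\delta,\bm{t_2})\in J}X_{\bm\delta,\bm{t_2}} \leq \max_{J}\sum_{i\in[k]}\GW(\M^{\fbr}(m_i,n_i,\Delta(\theta^*)\delta_i,t_i)) + C\sqrt{k\log n},
\end{equation*}
which together with the previous two steps yields the claimed bound.

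The main obstacle is the discretization step: one must check the monotonicity of $\M^{\fbr}$ in both of its parameters and carefully track how rounding each coordinate up inflates the simplex radius by a factor of two. This doubling is precisely why the outer maxima in the statement of the lemma range over $S_{k,2}\cap H_k$ rather than $S_{k,1}\cap H_k$. The remainder is a routine Gaussian concentration/union-bound argument, where the sub-Gaussian parameter of each $X_{\bm\delta,\bm{t_2}}$ is kept at $O(1)$ uniformly in $(\bm\delta,\bm{t_2})\in J$ so that only the $\sqrt{\log|J|}=O(\sqrt{k\log n})$ factor survives.
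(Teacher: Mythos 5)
Your proposal is correct and follows essentially the same route as the paper: decouple via Lemma~\ref{lem:imp} over the disjoint rectangles, round each $\delta_i$ and $t_i^2$ up to the grid $H_k$ (doubling the simplex radius from $1$ to $2$, exactly as in the statement), and then pull the finite maximum outside the expectation via Gaussian concentration. The paper packages the last step as Lemma~\ref{lem:Guntu} (a maximal inequality from Guntuboyina et al.) and bounds $|H_k\cap S_{k,2}|$ by $Ce^{Ck}$ via binomial coefficients rather than your cruder $k^{2k}$, but both yield the same $C\sqrt{k\log n}$ term.
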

\begin{proof}
	Using Lemma~\ref{lem:imp} we can write
	\begin{align}\label{eq:step1}
	&\E \sup_{\theta \in T_{K(V^*)}(\theta^*): \|\theta\| \leq 1} \langle Z, \theta \rangle \leq 
	\:\:\E \sup_{\bm \delta \in S_{k^*,\Delta(\theta^*)}} \sup_{\bm {t^2} \in S_{k^*,1}} \sum_{i 
		\in [k^*]} \sup_{\theta \in T_{K(V^*)}(\theta^*): \|\theta\| \leq 1} \langle 
	Z_{|R_i^*}, \theta_{|R_i^*} \rangle \nonumber \\\leq&\, \E \sup_{\bm \delta \in 
		S_{k^*,\Delta(\theta^*)}} \sup_{\bm {t^2} \in S_{k^*,1}} \sum_{i \in [k^*]} 
	\sup_{\theta_i \in \M^{4}(m_i, n_i,\delta_i,t_i)} \langle Z, \theta_i \rangle 
	\end{align}
	where, by a slight abuse of notation, $Z$ always refers to a matrix of independent standard normals with appropriate number of rows and columns.
	
	At this point, we would like to convert the supremum over $\bm\delta, \bm {t^2}$ (or, equivalently $\bm t$) in the non negative simplex to a maximum over a finite net of $\bm \delta, \bm t$ We can accomplish 
	this by the following trick. Fix any $\bm \delta \in 
	S_{k^*,\Delta(\theta^*)}.$ Then we can define a vector $\bm q = \bm q(\bm \delta) \in \R^{k^*}$ such that $$q_i \coloneqq \frac{1}{k^*} \big\lceil 
	\frac{k^* \delta_i}{\Delta(\theta^*)} \big\rceil.$$ It is clear that $\mb q \in H_{k^*} \cap S_{k^*,2}$. It is also clear that $\bm \delta \leq \bm q\:\Delta(\theta^*)$ element-wise. Due to 
	similar reason, for any $\bm {t^2} \in S_{k^*,1}$ there exists $\bm w = \bm w(\bm t) \in H_{k^*} \cap 
	S_{k^*,2}$ such that $\bm {t^2} \leq \bm w$ element-wise. Since the collections $\M^{4}(m_i, n_i,\delta_i,t_i)$ are increasing in $(\delta_i, t_i)$ (with respect to set inclusion), it follows from the previous discussion that
	\begin{align}\label{eq:step2}
	&\E \sup_{\bm \delta \in S_{k^*,\Delta(\theta^*)}} \sup_{\bm {t^2} \in S_{k^*,1}} \sum_{i \in [k^*]} \sup_{\theta_i \in \M^{4}(m_i, n_i,\delta_i,t_i)} \langle Z, \theta_i \rangle  \nonumber \\\leq&\, \E\: \max_{\Delta(\theta^*) \bm \delta: \bm \delta \in S_{k^*,2} \cap 
		H_{k^*}}\: \max_{\bm {t^2} \in S_{k^*,2} \cap H_{k^*}} \:\sum_{i \in [k^*]} \sup_{\theta_i \in \M^{4}(m_i, n_i,\Delta(\theta^*) \delta_i ,t_i)} \langle Z, \theta_i \rangle 
	\end{align}

	Since $Z$ is a matrix with i.i.d $N(0, 1)$ entries, the first two maximums in the right hand side of the above display can actually be taken outside the expectation upto 
	an additive term. This follows from the well known concentration properties of suprema of gaussian random 
	variables. In particular, we now apply Lemma~\ref{lem:Guntu} (stated in the appendix), true for suprema of gaussians, to obtain for a universal constant $C,$
	\begin{align}\label{eq:step3}
	&\E\: \max_{\Delta(\theta^*) \bm \delta: \bm \delta \in S_{k^*,2} \cap H_{k^*}}\: \max_{\bm {t^2} 
		\in S_{k^*,2} \cap H_{k^*}} \:\sum_{i \in [k^*]} \sup_{\theta_i \in \M^{4}(m_i, 
		n_i,\Delta(\theta^*) \delta_i ,t_i)} \langle Z, \theta_i \rangle \nonumber \\\leq&\, 
	\max_{\Delta(\theta^*) \bm\delta: \bm\delta \in S_{k^*,2} \cap H_{k^*}}\: \max_{\bm {t^2} \in 
		S_{k^*,2} \cap H_{k^*}} \:\sum_{i \in [k^*]} \E \sup_{\theta_i \in \M^{4}(m_i, 
		n_i,\Delta(\theta^*) \delta_i ,t_i)} \langle Z, \theta_i \rangle \:\: + \:\: C 
	\sqrt{\log |H_{k^*} \cap S_{k^*, 2}|}.
	\end{align}
	
	To bound the log cardinality $\log |H_{k^*} \cap S_{k^*,2}|$, 
	note that for any positive integer $k^*$, the cardinality $|H_{k^*} \cap S_{k^*,2}|$ is the same as the number of $k^*$ tuples of positive integers summing up to at most $2k^*$. By standard combinatorics, we have
	\begin{equation*}
	|H_{k^*} \cap S_{k^*,2}| = \sum_{s = k^*}^{2k^*} {s - 1 \choose k^* - 1}\,.
	\end{equation*}
	Since 
	$$\frac{\binom{s}{k^*-1}}{\binom{s-1}{k^*-1}} = \frac{s}{s - k^* + 1} \geq \frac{2k^* - 1}{k^*}\,$$
	for all $s \in \{k^*, \ldots, 2k^*-1\}$, it follows that
	\begin{equation*}
	|H_{k^*} \cap S_{k^*,2}| \leq 3\binom{2k^* - 1}{k^* - 1} \leq C\e^{Ck^*}\,
	\end{equation*}
	for some positive absolute constant $C$.
	
	Using~\eqref{eq:step1},~\eqref{eq:step2},~\eqref{eq:step3} and the above cardinality bound, we can finally finish the proof by writing
	\begin{align*}
	& \GW(T_{K(V^*)}(\theta^*) \cap B_{n,n}(1)) = \E \sup_{\theta \in T_{K(V^*)}(\theta^*): \|\theta\| \leq 1} \langle Z, \theta \rangle \nonumber \\\leq&\, \max_{\Delta(\theta^*) \bm\delta: \bm\delta \in S_{k^*,2} \cap H_{k^*}}\: \max_{\bm {t^2} \in S_{k^*,2} \cap H_{k^*}} \:\sum_{i \in [k^*]} \E \sup_{\theta_i \in \M^{4}(m_i, n_i,\Delta(\theta^*) \delta_i ,t_i)} \langle Z, \theta_i \rangle \:\: + \:\: C \sqrt{k^*}\,. &&\qedhere
	\end{align*}
\end{proof}
Operationally, the above lemma reduces the task of upper bounding the Gaussian width of $T_{K(V^*)(\theta^{*})} \cap B_{n,n}(1)$ to upper bounding the Gaussian width of $\M^{4}$ with appropriate parameters. However, it would be convenient for us to bound the Gaussian width when the number of boundaries involved in the constraint is at most one instead of four. The results in the next subsection makes this possible.

\subsection{Further simplification: from four boundaries to one}
We now proceed to the second step, i.e., reducing the number of boundaries involved in the constraints from four to one (or zero).
Thus, we will keep on subdividing each $\theta_{|R_i}$ until we obtain submatrices satisfying constraints similar to \eqref{def:fbr}, albeit with the $\ell_1$-norm of at most one boundary vector appearing on the right hand side of the bound on total variation. This is the content of this subsection.
\newline
\newline
Taking the cue from the the previous subsection, let us define
\begin{equation*}
\M^{\tp}(m',n',\delta,t) \coloneqq \{\theta \in \R^{m' \times n'}: \TV(\theta) \leq \norm{\theta_\tp}_1 + \delta,\, \|\theta\| \leq t\}.
\end{equation*} 
We can define $\M^{\bt}(m',n',\delta,t)$, $\M^{\lt}(m',n',\delta,t)$ and $\M^{\rt}(m',n',\delta,t)$ 
in a similar fashion. Notice that the constraint satisfied by the total variation of the members of $\M^{\rt}(m',n',\delta,t)$ is ``almost'' identical to 
\eqref{eq:reqd}. \textit{By abuse of notation we will refer to any of the four families of matrices described above by a generic notation which is 
	$\M^{1}(m',n',$ $\delta,t)$.} The reason behind this is that our ultimate concerns would be the Gaussian widths of these families which, for $m'$ and $n'$ close enough to each other, are expected to be of similar order by symmetry. 
Using a single notation for them would thus minimize the notational clutter. In a similar vein we define
\begin{equation*}
\M^{0}(m',n',\delta,t) \coloneqq \{\theta \in \R^{m' \times n'}: \TV(\theta) \leq \delta,\, \|\theta\| \leq t\}\,.
\end{equation*} 

Having defined the relevant families of matrices, we can now state our main result for this subsection. 
\begin{lemma}\label{lem:gw4to1}
	Fix positive integers $m,n$ and positive numbers $\delta,t.$ Define for each integer $j \geq 1$,
	\begin{equation}
	\label{eq:finalpartideltaij}
	\delta^{(j)} \coloneqq \delta + 16\:(j+1)\: \:t\:\big(\sqrt{\frac{{m}}{{n}}} + \sqrt{\frac{{n}}{{m}}}\big)\,.
	\end{equation}
	Then we have the following bound for a universal constant $C > 0$,
	\begin{align*}
	\GW&(\M^{4}(m,n,\delta,t))\\
	&\leq C\, \Big(\sum_{j = 1}^{K}\big(\GW(\M^{1}(\frac{m}{2^j},\frac{n}{2^j},\delta^{(j)},t)) + \GW(\M^{0}(\frac{m}{2^j},\frac{n}{2^j},\delta^{(j)},t)\big)\Big).
	\end{align*}
	Here, to simplify notations, we use $m/2^j$, for $m,j \in \N$, to denote any (but fixed in any given context) integer $m'$ 
	between $m2^{-(j + 1)}$ and $m2^{-j}$. The similar definition for $n$ instead of $m$ is denoted by $n/2^j.$ $K$ equals the number of binary divisions of $[m] \times [n]$ on both axes that are possible and equals $\min\{\log_2 m, \log_2 n\}$ up to a universal constant. 
\end{lemma}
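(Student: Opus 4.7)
The plan is to recursively dyadically decompose any $\theta \in \M^{\fbr}(m,n,\delta,t)$, choosing the splitting lines carefully so that the interior boundaries created have small $\ell_1$-norm. Concretely, to split a sub-piece $\theta'$ of size $m' \times n'$, I would pick a middle row $r^*$ and a middle column $c^*$ minimizing $\|\theta'[r^*,:]\|_1$ and $\|\theta'[:,c^*]\|_1$ respectively. Since $\|\theta'\| \leq \|\theta\| \leq t$, the Cauchy--Schwarz inequality combined with the ``minimum $\leq$ average'' principle gives $\|\theta'[r^*,:]\|_1 \leq t\sqrt{n'/m'}$ and $\|\theta'[:,c^*]\|_1 \leq t\sqrt{m'/n'}$. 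Crucially, under dyadic subdivision along both axes the ratios $m'/n'$ and $n'/m'$ equal $m/n$ and $n/m$, so at every level these bounds remain $t\sqrt{n/m}$ and $t\sqrt{m/n}$, independent of the level index $j$.

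After one round of splitting, $\theta$ decomposes into four quadrants, each with two boundaries inherited from the original $\theta$-boundary and two interior boundaries of small $\ell_1$-norm. To further reduce the number of inherited boundaries from two down to one or zero, I would apply an intermediate two-to-one boundary reduction step, in the spirit of the paper's Lemmata~\ref{lem:finalparti}--\ref{lem:2to1}: splitting each quadrant further produces pieces with at most one inherited boundary (members of some $\M^{\obr}$-type class) or with zero inherited boundaries (members of the $\M^{\nbr}$-type class), together with a smaller ``active corner'' piece still inheriting two original boundaries, which will be subdivided at the next level. Recursing over $K$ dyadic levels yields, at each level $j$, a constant (and universal) number of $\M^{\obr}$- and $\M^{\nbr}$-type sub-pieces of size $(m/2^j)\times(n/2^j)$, plus four tiny ``leftover'' corners at level $K$ of $O(1)$ size, whose Gaussian width is a universal constant. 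The wiggle room at level $j$ absorbs (i) the original $\delta$; (ii) the $\ell_1$-norms of interior dividing lines touching the sub-piece, created across the $j$ levels of subdivision, each bounded by $t(\sqrt{m/n}+\sqrt{n/m})$; and (iii) any ``leftover'' boundary mass from the parent's $\M^{\fbr}$-constraint not inherited by the sub-piece. These combine into the form $\delta+16j\,t(\sqrt{m/n}+\sqrt{n/m})$ after absorbing numerical constants into the factor $16$.

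Finally, since the produced sub-pieces at each level are disjoint sub-matrices of $\theta$, one has $\langle Z,\theta\rangle = \sum_i \langle Z_i,\theta_i\rangle$ for independent Gaussian blocks $Z_i$; hence super-additivity of the supremum and the expectation yield
\[
\GW(\M^{\fbr}(m,n,\delta,t)) \leq C\sum_{j=1}^{K}\bigl(\GW(\M^{\obr}(m/2^j,n/2^j,\delta^{(j)},t)) + \GW(\M^{\nbr}(m/2^j,n/2^j,\delta^{(j)},t))\bigr)
\]
after absorbing the bounded number of pieces per level and the tiny leftover corners into the constant $C$. The principal obstacle is verifying that each produced sub-piece genuinely belongs to the claimed $\M^{\obr}$ or $\M^{\nbr}$ class with wiggle room $\delta^{(j)}$: the $\M^{\fbr}$-constraint on $\theta$ is a \emph{global} inequality and does not automatically restrict individual sub-pieces. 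Deriving the per-piece constraints requires delicate bookkeeping using the identity $\TV(\theta) = \sum_i \TV(\theta_i) + \sum_{\text{interior edges}}|\Delta_e\theta|$ together with the Cauchy--Schwarz control on the $\ell_1$-norms of the chosen splitting rows and columns, and a careful accounting of how the original boundary mass $\|\theta_{\text{4 bdrys}}\|_1$ distributes across the sub-pieces so that the portion not inherited by any given sub-piece can legitimately be absorbed into $\delta^{(j)}$.
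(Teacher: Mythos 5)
Your decomposition is the same one the paper uses: dyadic splitting with reference rows and columns chosen by Cauchy--Schwarz and the minimum-is-at-most-average principle, a four-to-two boundary reduction followed by a recursive two-to-one-or-zero reduction producing $O(1)$ pieces per level, constant aspect ratio so the per-level wiggle-room increment $16\,t(\sqrt{m/n}+\sqrt{n/m})$ does not grow with $j$, and sub-additivity of the Gaussian width over disjoint blocks at the end. All of that matches Lemmata~\ref{lem:finalparti}, \ref{lem:4to2} and \ref{lem:2to1}.

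However, the step you yourself flag as the principal obstacle --- converting the single global constraint $\TV(\theta)\leq\sum_c\|\theta_c\|_1+\delta$ into a separate constraint for each sub-piece --- is the entire content of the lemma, and your proposal does not actually close it; ``delicate bookkeeping'' is not a mechanism. The paper's mechanism is an elementary but essential deterministic fact (Lemma~\ref{lem:simple}): if nonnegative quantities satisfy $f_i\geq g_i-h_i$ for every $i$ and $\sum_i f_i\leq\sum_i g_i+\delta$ globally, then $\sum_i(f_i-g_i)_+\leq\delta+\sum_i h_i$, so \emph{each} $f_i\leq g_i+\delta+\sum_i h_i$. Here $f_i=\TV(\theta^{ab})$ for the four quadrants, $g_i$ is the $\ell_1$-mass of the inherited boundaries of that quadrant, and $h_i$ collects the $\ell_1$-norms of the chosen low-norm interior rows and columns, whose total is at most $16\,t(\sqrt{m/n}+\sqrt{n/m})$. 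The per-quadrant lower bounds $f_i\geq g_i-h_i$ come from telescoping $\TVr$ and $\TVc$ between an inherited boundary and the chosen interior line (e.g.\ $\TVr(\theta^{\tl})\geq\|\theta^{\tl}_{\lt}\|_1-\|\theta[1{:}m/2,j]\|_1$), and the global upper bound is exactly the $\M^{\fbr}$ membership of $\theta$ after noting that the four inherited boundaries partition the original boundary. Without this one-line deduction the per-piece class memberships you assert are unproved, so you should state and apply this lemma explicitly at each level of the recursion. A minor further point: the wiggle room must be charged cumulatively, once per level of subdivision a given piece has passed through, which is precisely why $\delta^{(j)}$ grows linearly in $j$; your item (iii) about ``leftover boundary mass'' is not needed once Lemma~\ref{lem:simple} is invoked, since that mass is already accounted for in the $g_i$'s.
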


The above lemma bounds the Gaussian width of $\M^{4}$ in terms of Gaussian widths of simpler classes of matrices $\M^{1}$ and $\M^{0}$. We devote the next subsection to its proof. 

\subsection{Proof of Lemma~\ref{lem:gw4to1}}\label{subsec:lem4to1}
We need some intermediate lemmas. We start with the following lemma. The notation convention is same as in Lemma~\ref{lem:gw4to1}.
\begin{lemma}\label{lem:finalparti}
There exists a rectangular partition $\mathcal R$ of $[m] \times [n]$ with the following property. For any $\theta \in \M^{4}(m, n, \delta, t)$,
there exists  non negative real numbers $t_R$ for every rectangle $R \in \mathcal R$ such that: \begin{itemize}
		\item $\mathcal R = \bigcup_{j \in [K], \,k \in [2]}\mathcal R_{j, k}$ where $\mathcal R_{j, k}$'s are disjoint sets of rectangles and all the rectangles in $\mathcal R_{j, k}$ are of size $m_i/2^j \times n_i/2^j$.

		\item $|\mathcal R_{j, 1}| \leq 8$ and for any $R \in \mathcal R_{j, 1}$ we have $\theta_{|R}\in\M^{1}(m/2^{j},n/2^{j},\delta^{(j)},t_{R})$.
		
		\item $|\mathcal R_{j, 2}| \leq 4$ and for any $R \in \mathcal R_{j, 2}$ we have $\theta_{|R}\in\M^{0}(m/2^{j},n/2^{j},\delta^{(j)},t_{R})$.
		
		\item $\sum_{R \in \mathcal R} t_{R}^2 = t^2$
	\end{itemize}
\end{lemma}

\begin{proof}[Proof of Lemma~\ref{lem:gw4to1}]
	The proof of Lemma~\ref{lem:gw4to1} follows directly from Lemma~\ref{lem:finalparti} and the sub-additivity of the Gaussian width functional. 
\end{proof}


The task now is to prove Lemma~\ref{lem:finalparti}. The proof of Lemma~\ref{lem:finalparti} is divided into two steps where we state and prove two intermediate lemmas. In the first step we reduce the number of ``active'' boundaries, i.e., the number of boundary vectors involved in the bound on total variation, from four to two and in the second step we reduce them from two to one or zero. The main idea of the proofs is essentially same as that of Lemma~\ref{lem:imp}. 

\begin{remark}
	While lemma~\ref{lem:finalparti} is true for any integers $m,n$, the reader can safely read on as if $m,n$ are 
	powers of $2$. The essential aspects of the proof of Lemma~\ref{lem:finalparti} all go through in this case. 
	Writing the general case would make the notations messy. 
	For the sake of clean exposition, we thus write the entire 
	proof when $m$ and $n$ are powers of $2$. At the end, we mention the modifications needed when $m,n$ are not powers of $2$.
\end{remark}

\medskip
\noindent{\bf Four to two boundaries. }

In order to state this result let us define for any $\delta > 0,$
\begin{equation*}
\M^\tr(m,n,\delta,t) \coloneqq \{\theta \in \R^{m \times n}: \TV(\theta) \leq 
\norm{\theta_\rt}_1 + \norm{\theta_\tp}_1 + \delta,\, \|\theta\| \leq t\}.
\end{equation*}
Similarly we can define the families $\M^{\tl}(\cdots),\,\M^{\bl}(\cdots)$ and $\M^\br(\cdots)$. Likewise $\M^{1}(m,n,\delta,t)$, we will refer generically to any of these 
four families of matrices by $\M^{2}(m,n,$ $\delta,t)$. Below we call a partitioning of a 
matrix $\theta \in \R^{m \times n}$ as an {\em equal dyadic partitioning} if each submatrix 
lies in $\R^{m/2 \times n/2}$ and is formed by adjacent rows and columns of $\theta$ as 
$\theta^{\tl},\,\theta^{\tr},$ $\theta^{\bl}$ and $\theta^{\br}$ in the obvious order.
\begin{lemma}\label{lem:4to2}
	Take any $\theta \in \M^{4}(m,n,\delta,t)$. Let us denote the four submatrices obtained by an equal dyadic partitioning of $\theta$. Then the submatrix $\theta^{ab}$, where $a \in \{\tp, \bt\}$ and $b \in \{\lt, \rt\}$, itself satisfies
	\begin{align*}
	\TV(\theta^{ab}) \leq \|\theta^{ab}_a\|_1 + \|\theta^{ab}_b\|_1 + \delta + 16 t\big(\sqrt{\frac{{m}}{{n}}} + \sqrt{\frac{{n}}{{m}}}\big)\,.
	\end{align*}
	In words, if a matrix $\theta \in \M^{4}(m,n,\delta,t)$ is dyadically partitioned into four equal sized submatrices, each of these four submatrices lies in $\M^{2}(m/2,n/2,\delta',t)$ where $\delta' \coloneqq \delta + 16 t 
	(\sqrt{\frac{{m}}{{n}}} + \sqrt{\frac{{n}}{{m}}})$; {furthermore the boundaries that are active for these submatrices are the ones that they share with $\theta$.}
\end{lemma}

\begin{proof}
	Since $\norm{\theta}_2 \leq t$, there exists $1 \leq i \leq m/2 < i' \leq m$ and $1 \leq j \leq n/2 < j' \leq n$ such that 
	\begin{align*}
	&\max\{\norm{\theta[i,\,]},\|\theta[i',\,]\|\} \leq \frac{2t}{\sqrt{m}}\\&
	\max\{\|\theta[,\,j]\|,\|\theta[,j']\|\} \leq  \frac{2t}{\sqrt{n}}\,.
	\end{align*}
	The previous display and the Cauchy-Schwarz inequality together imply
	\begin{align}\label{eq:l1bnd}
	&\max\{\|\theta[i,\,]\|_1,\|\theta[i',\,]\|_1\} \leq 2t\sqrt{\frac{n}{m}}\nonumber\\&
	\max\{\|\theta[\,,\,j]\|_1,\|\theta[\,,\,j']\|_1\} \leq 2t\sqrt{\frac{m}{n}}\,.
	\end{align}
	Now consider the submatrix $\theta^{\tl}$ for which we have
	$$\TVr(\theta^{\tl}) \geq \|\theta^{\tl}[\,,\,1]\|_1 -  \|\theta^{\tl}[\,,\,j]\|_1 = \|\theta^{\tl}_{\lt}\|_1 -  \|\theta[1:m/2\,,\,j]\|_1\,,$$
	where in the last step we used the fact that $\theta^{\tl}[\,,\,j] = \theta[1:m/2,\, j]$. A similar argument gives us
	\begin{equation*} 
	\TVc(\theta^{\tl}) \geq \|\theta^{\tl}_\tp\|_1 - \|\theta[i\,,1:n/2\,]\|_1\,.
	\end{equation*}
	Analogous lower bounds for $\TVr$ and $\TVc$ of the other three submatrices can be derived involving the $\ell_1$ norms of appropriate boundaries and (partial) rows or 
	columns of $\theta$. Adding all these together and using \eqref{eq:l1bnd}, we obtain
	\begin{equation*} 
	\sum_{\substack{a \in \{\tp, \bt\},\\ b \in \{\lt, \rt\}}} (\TVr(\theta^{ab}) + \TVc(\theta^{ab})) \geq \sum_{\substack{a \in \{\tp, \bt\},\\ b \in \{\lt, \rt\}}} (\|\theta^{ab}_a\|_1 + \|\theta^{ab}_b\|_1) - 16 t 
	\big(\sqrt{\frac{{m}}{{n}}} + \sqrt{\frac{{n}}{{m}}}\big).
	\end{equation*}
	On the other hand, since $\theta \in \M^{4}(m,n,\delta,t)$ we have
	\begin{align*}
	\sum_{\substack{a \in \{\tp, \bt\},\\ b \in \{\lt, \rt\}}} (\TVr(\theta^{ab}) + \TVc(\theta^{ab})) &\leq \TV(\theta) \leq \sum_{c \in \{\tp, \bt, \lt, \rt\}} \|\theta_c\|_1 + \delta\\
	& = \sum_{\substack{a \in \{\tp, \bt\},\\ b \in \{\lt, \rt\}}} (\|\theta^{ab}_a\|_1 + \|\theta^{ab}_b\|_1) + \delta\,.
	\end{align*}
	An application of Lemma~\ref{lem:simple} now finishes the proof of the lemma from the previous two displays. 
\end{proof}

\medskip
\noindent{\bf Two to one or zero boundary. }

Let us start by stating the following lemma which one can think of as a version of Lemma~\ref{lem:4to2} applied to an element of 
$\M^{2}(m, n, \delta, t)$. The proof is very similar and we leave it to the reader to verify.
\begin{lemma}\label{lem:2to1}
	Let $\theta \in \M^{ab}(m,n,\delta,t)$ for some $a \in \{\tp, \bt\}$ and $b \in \{\lt, 
	\rt\}.$ We can partition $\theta$ into equal sized four submatrices $\theta^{\tl}, \theta^{\tr},$ $\theta^{\bl}$ and $\theta^{\br}$ in the obvious manner such that the submatrix $\theta^{cd}$, where $c \in \{\tp,$ $ \bt\}$ and $d \in \{\lt, \rt\}$, satisfies
	\begin{align*}
	\TV(\theta^{cd}) \leq \|\theta^{cd}_c\|_1 \I\{a \, = \,c\} + \|\theta^{cd}_d\|_1 \I\{b \, = \,d\} + \delta + 16\:t\big(\sqrt{\frac{{m}}{{n}}} + \sqrt{\frac{{n}}{{m}}}\big)\,.
	\end{align*}
	In words, if a matrix $\theta \in \M^{2}(m,n,\delta,t)$ is dyadically partitioned into four equal sized submatrices, then each of these four submatrices has at most two active boundaries and a wiggle room of at most $\delta + 16t(\sqrt{\frac{{m}}{{n}}} + \sqrt{\frac{{n}}{{m}}})$; furthermore the active boundaries are the ones that they share with one of the active boundaries of $\theta$.
\end{lemma}

We are now ready to conclude the proof of Lemma~\ref{lem:finalparti}.

\begin{proof}[Proof of Lemma~\ref{lem:finalparti}]
	Recall that we are assuming $m,n$ are powers of $2$ for simplicity of exposition.

	\textbf{Step 0}: Partition $[m] \times [n]$ dyadically into four equal rectangles so that for any such rectangle $S$,  $\theta_{|S} \in \M^{2}(m/2,n/2, \delta^{(0)}, \norm{\theta_{|S}})$ by Lemma~\ref{lem:4to2} where 
	$$\delta^{(0)} = \delta + 16\:t\big(\sqrt{\frac{m}{n}} + \sqrt{\frac{n}{m}}\big)\,.$$

	\textbf{Step 1}: Let $S$ (there are four of them) be a generic rectangle obtained from the previous step. Using Lemma~\ref{lem:2to1}, we now partition $\theta_{|S}$ into four equal parts (rectangles).   
	We then get two matrices in $\M^1(m/4,n/4,\delta^{(1)}, t)$, one matrix in $\M^0(m/4,n/4,\delta^{(1)}, t)$ and \textit{the remaining one from $\M^2(m/4,n/4,\delta^{(1)}, t)$}. Here, $$\delta^{(1)} =  \delta^{(0)} + 16\:t(\sqrt{\frac{m}{n}} + \sqrt{\frac{n}{m}}) = \delta + 32\:t(\sqrt{\frac{m}{n}} + \sqrt{\frac{n}{m}})\,.$$

	\textbf{Steps~$\bm{j} \mb{\geq} \mb{2}$}: From the last step we get exactly one matrix in $\M^2(m/4,n/4,\delta^{(1)}, t)$, for each of the $4$ rectangles $S.$ For each $S,$ we now recursively use Lemma~\ref{lem:2to1} by partitioning this matrix again into four exactly equal parts in a dyadic fashion and continue the same procedure with the matrix obtained in each step with two active boundaries until we end up with matrices only with 0 or 1 active boundary. Observe that in the very last step we arrive at a submatrix with exactly one row or column in place of the one with two active boundaries.
	\newline

	For each $j \geq 1$, define $\mathcal R_{j,1}$ as the collection of rectangles $R$ obtained in step $j$ such that $\theta_{|R}$ has exactly $1$ active boundary. From Lemma~\ref{lem:2to1}, we know that there are exactly two such 
	rectangles for any given $S$ (from step $0$) 
	and therefore $|\mathcal R_{j,1}| \leq 8$. 
	For any $j \geq 1,$ and any rectangle $R \in \mathcal R_{j,1}$, repeated application of Lemma~\ref{lem:2to1} yields that $\theta_{|R} \in \mathcal M^{1}(m/2^j, n/2^j, \delta^{(j)}, \norm{\theta_{|R}})$ where
	$$\delta^{(j)} = \delta + 16(j + 1)\:t(\sqrt{\frac{m}{n}} + \sqrt{\frac{n}{m}}).$$
	Now defining $\mathcal R_{j,2}$ as the collection of rectangles $R$ obtained in step $j$ 
	such that $\theta_{|R}$ has no active boundary, we can deduce in a similar way that 
	$|\mathcal R_{j, 2}| \leq 4.$ Also for such rectangles $R$ and $j \geq 1$ we have 
	$\theta_{|R} \in \mathcal M^{0}(m/2^j, n/2^j,\delta^{(j)},\norm{\theta_{|R}})$. Finally, notice that 
	$$\sum_{j \geq 1}\sum_{R \in \mathcal R_{j, 1} \cup \mathcal R_{j, 2}} \norm{\theta_{|R}}^2 = \norm{\theta}^2 \leq t^2\,.$$
	Thus the collection of rectangles $\{\mathcal R_{j,k}: j\geq 1, k \in [2]\}$ satisfies all the conditions of Lemma~\ref{lem:finalparti}.


\end{proof}

\begin{remark}
	For the statement of Lemma~\ref{lem:gw4to1} to hold, the important thing in the proof of 
	Lemma~\ref{lem:finalparti} is that in every step $1 \leq j \leq K$, the aspect ratio of 
	the submatrices does not change significantly. The reader can check that at every step, 
	both the number of rows and columns halve, thus keeping the aspect ratio constant. At 
	every step, the dimensions of the submatrices halve and thus decrease geometrically, 
	while the allowable wiggle room increases additively by the factor (does not change with 
	$j$) $16 \:t(\sqrt{\frac{m}{n}} + \sqrt{\frac{n}{m}})$. 
\end{remark}

\begin{remark}
	Let us discuss the case when $m,n$ are not necessarily 
	powers of $2$ in the proof of Lemma~\ref{lem:finalparti}. 
	The first step of reducing the number of active boundaries 
	from four to two, by applying Lemma~\ref{lem:4to2}, can be carried out in the same way by splitting at the point 
	$\floor{m/2}$ and $\floor{n/2}$. Next, we come to the 
	stage when we are applying Lemma~\ref{lem:2to1} to reduce 
	the number of active boundaries from two to one, on the 
	four submatrices obtained from the previous step.  Let us 
	denote the dimensions of these $4$ submatrices generically 
	by $m', n'$. Recall, in the first step of subdivision, we 
	get exactly one submatrix with $2$ active boundaries. The others have $1$ or $0$ active boundaries. \textbf{At this step, we can subdivide such that the submatrix with two active boundaries has dimensions which are exactly powers 
		of $2$.} For instance, we can split at the unique power of $2$ between $m'/4$ and $m'/2$ on one dimension and do the 
	exact same thing for the other dimension. Once we have this submatrix with two active boundaries to have 
	dimensions which are exactly powers of $2$, we can carry out the rest of the steps as in the proof of 
	Lemma~\ref{lem:finalparti}. It can be checked that, in this case, all the inequalities we deduce while proving Lemma~\ref{lem:gw4to1} goes through with the possible 
	mutiplication of a universal constant. 
\end{remark}


\subsection{Upper bounds on Gaussian Widths and the proof of Proposition~\ref{prop:gwupbd}}
Now that we have reduced the problem of bounding the gaussian width of $T_{K(V^*)(\theta^*)} \cap B_{n, n}(1)$ to that of $\M^{0}(m,n,\delta,t)$ and $\M^{1}(m,n,\delta,t)$, we need to obtain upper bounds on these quantities in order to 
conclude the proof of Theorem~\ref{thm:adap}. Our next lemma provides an upper bound on the 
gaussian width of $\mathcal M^{0}(m, n, \delta, t)$ which we henceforth denote as $\gw^{0}(m, n, \delta, t)$.
\begin{lemma}\label{lem:gwnobdry}
	Fix $\delta > 0$ and $t \in (0, 1]$. For positive integers $m$ and $n$ such that $\max\{m/n,n/m\} \leq c$ for some $c > 0$, we have the following upper bound on the Gaussian width: 
	
	\begin{equation*}
	\gw^{0}(m,n,\delta,t) \leq C \big(\log \big(\frac{tn}{\delta} \vee \e\big) \delta (\log \e n)^{3/2} + t(\log \e n)^{1/2}\big)
	\end{equation*}
	where $C$ is a constant depending only on $c$.
\end{lemma}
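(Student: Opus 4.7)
My plan is to use an orthogonal decomposition of $\theta$ into its mean and zero-mean parts and bound the Gaussian width of each piece separately. Writing $\theta = \overline\theta\,\mathbf{1} + \theta_0$ with $\theta_0 \coloneqq \theta - \overline\theta\,\mathbf{1}$, the orthogonality of the decomposition combined with $\|\theta\| \leq t$ yields $|\overline\theta| \leq t/\sqrt{mn}$ and $\|\theta_0\| \leq t$, while $\TV(\theta_0) = \TV(\theta) \leq \delta$ by translation invariance of $\TV$. Subadditivity of the supremum then gives $\gw^{\nbr}(m,n,\delta,t) \leq I + II$, where
\[
I \coloneqq \E \sup_{|\overline\theta| \leq t/\sqrt{mn}} \overline\theta\,\langle Z,\mathbf{1}\rangle, \qquad II \coloneqq \E \sup_{\theta_0 \in K_0} \langle Z, \theta_0\rangle,
\]
with $K_0 \coloneqq \{\theta_0 \in \R^{m\times n}: \overline{\theta_0} = 0,\ \TV(\theta_0) \leq \delta,\ \|\theta_0\| \leq t\}$.

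The mean contribution $I$ is a one-dimensional Gaussian quantity. Pulling the supremum out of the expectation and applying Cauchy--Schwarz,
\[
I \leq \frac{t}{\sqrt{mn}}\,\E|\langle Z,\mathbf{1}\rangle| \leq \frac{t}{\sqrt{mn}}\sqrt{\E\langle Z,\mathbf{1}\rangle^2} = \frac{t}{\sqrt{mn}}\sqrt{mn} = t.
\]

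For the zero-mean contribution $II$, I plan to establish an $m \times n$ analog of Lemma~\ref{lem:gwTVmean0}. Both ingredients of that proof extend to the rectangular setting under the bounded aspect ratio hypothesis $\max\{m/n, n/m\} \leq c$ with only cosmetic changes: Proposition~\ref{prop:gagliardo} is already stated for rectangular matrices (and its constant depends on $c$ alone), and the greedy $(\TV,\eta)$-partitioning argument of Lemma~\ref{lem:division} carries over once each rectangle is split into four roughly-equal sub-rectangles, giving the same partition-counting bound up to constants depending on $c$. Using $\|\theta_0\|_\infty \leq \|\theta_0\| \leq t \leq 1$, the rectangular analog of Proposition~\ref{prop:tvcovering} applied with $L = t$ yields the metric entropy bound
\[
\log \mathcal{N}(K_0,\epsilon) \leq C \log n \log(1 + 2tn/\epsilon)\,\max\{\delta^2 \log n/\epsilon^2,\,1\}.
\]
Plugging this into Dudley's entropy integral and integrating $\int_0^{2t}\sqrt{\log \mathcal N(K_0,\epsilon)}\,d\epsilon$ — with the upper limit $2t$ coming from $\|\theta_0\| \leq t$ rather than from the weaker $\|\theta_0\| \leq 3\delta$ bound one would get from Proposition~\ref{prop:gagliardo} alone — produces, after standard manipulations using $t \leq 1$ and $\log(1 + 2tn/\epsilon) = O(\log n)$ throughout the relevant range, a bound of the form $II \leq C\delta(\log n)^2 + Ct$.

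The main obstacle is this last step: tracking the $t$-dependence of the additive lower-order term through Dudley's integral. The key point is that capping the upper limit of integration at $2t$ (via the $\ell_2$ constraint) is what converts an unconditional additive constant into the desired $Ct$; this exploits the $\ell_2$ constraint in the covering argument in a way that dropping $\|\theta_0\| \leq t$ and simply invoking the rectangular analog of Lemma~\ref{lem:gwTVmean0} would not. Once this is in hand, adding $I$ and $II$ yields $\gw^{\nbr}(m,n,\delta,t) \leq C(\delta(\log n)^2 + t)$, as claimed.
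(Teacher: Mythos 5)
Your proposal is correct and follows essentially the same route as the paper, whose proof simply observes that $\mathcal M^{\nbr}(m,n,\delta,t)\subseteq \mathcal T_{m,n,\delta,t}$ (since $\|\theta\|\leq t$ gives $\|\theta\|_\infty\leq t$) and then reruns the argument of Lemma~\ref{lem:gwTVmean0}, i.e., the rectangular version of the entropy bound in Proposition~\ref{prop:tvcovering} with $L=t$ followed by Dudley's integral truncated at the $\ell_2$ diameter $2t$. Your preliminary split into mean and zero-mean parts is harmless but unnecessary here, because the $\ell_\infty$ bound and the diameter both come directly from the constraint $\|\theta\|\leq t$ rather than from any mean-zero structure.
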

\begin{proof}
	Since $m$ and $n$ are of the same order, the bound computed in \eqref{eq:gwkv} from Section~\ref{sec:thm1} remains valid in this case.
\end{proof}

In our next proposition, we provide an upper bound on $\gw^{1}(m,n,\delta,t)$, i.e., the gaussian width of $\mathcal M^{1}(m, n, \delta, 
t)$. This is the main result in this subsection and one of the main technical contributions of this paper. 
\begin{proposition}\label{prop:gwonebdry}
	Fix $\delta \in (0, n]$ and $t \in (0, 1]$. Then for positive integers $m,n$ satisfying the conditions of the previous lemma, we have the following upper bound on the Gaussian width: 
	\begin{equation}
	\gw^{1}(m,n,\delta,t) \leq C (\log \e n)^{9/2} n^{1/4}\sqrt{(t + \delta)^{2\downarrow}} + C\big( (\log \e n)^4 t + n^{-9}\big)\,.
	\end{equation}
	Here $x^{2\downarrow} \coloneqq x + x^2$ and 
	$C > 0$ is a constant depending solely on $c$.
\end{proposition}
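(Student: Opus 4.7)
\medskip

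The plan is to obtain a metric entropy bound for $\mathcal M^{\obr}(m,n,\delta,t)$ via a two-stage covering construction (cover the active boundary first, then cover the interior given the boundary), and then invoke Dudley's entropy integral (Theorem~\ref{thm:dudley}). By symmetry it suffices to treat the case $\mathcal M^{\rt}$, so write $b = \theta_{\rt}\in\R^{m}$ and note the coupled constraints $\|\theta\|\le t$, $\|b\|\le t$, and $\TV(\theta)\le\|b\|_{1}+\delta$. Observe in particular that $\TV(b)\le \TV(\theta) \le \|b\|_{1}+\delta \le \sqrt{m}\,t+\delta$ by Cauchy--Schwarz.

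First I would cover $b$ as a one-dimensional object. Using the 1D analogue of the greedy $(\TV,\eta_{1})$-partitioning scheme of Lemma~\ref{lem:division} applied to $b$, we obtain a dyadic interval partition of $[m]$ of cardinality at most $O(\log m\cdot\TV(b)/\eta_{1})$; replacing $b$ by its mean on each interval and then rounding to a grid of spacing $\eta_{1}/\sqrt{m}$ within $[-t,t]$ produces a finite net $\mathcal N_{b}$ whose $\ell_{2}$-error against $b$ is controlled by the 1D Gagliardo--Nirenberg--Sobolev inequality (a special case of Proposition~\ref{prop:gagliardo}). This step is the engine behind the $n^{1/4}$ factor: the boundary, being one-dimensional, admits covers whose log-cardinality grows like $\sqrt{n}/\eta_{1}$ rather than $n/\eta_{1}^{2}$, and Dudley's integral on this portion will produce exactly $n^{1/4}$ up to logarithmic factors.

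Second, for each fixed $\tilde b\in\mathcal N_{b}$, I would cover the interior of the matrix. Since the true $\theta$ satisfies $\TV(\theta)\le \|b\|_{1}+\delta \le \|\tilde b\|_{1}+\eta_{1}\sqrt{m}+\delta$, I can apply the 2D greedy $(\TV,\eta_{2})$-partitioning scheme (Lemma~\ref{lem:division}) to $\theta$ to partition $L_{n}$ into at most $O(\log n\cdot \TV(\theta)/\eta_{2})$ rectangles, approximate $\theta$ on each by its rectangular mean (error controlled by Proposition~\ref{prop:gagliardo}), and then round those means to a grid of spacing $\eta_{2}/n$ in $[-t,t]$; the additional constraint that the approximation agree with $\tilde b$ on the right boundary only tightens things. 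Combining the cardinalities of the boundary net and the interior net conditional on the boundary, and optimizing $\eta_{1},\eta_{2}$ against the target scale $\epsilon$, one arrives at a bound of the shape
\begin{equation*}
\log\mathcal N\bigl(\mathcal M^{\obr}(m,n,\delta,t),\epsilon\bigr)\;\le\;
C(\log n)^{a}\Bigl(\tfrac{(t+\delta)^{2\downarrow}\sqrt{n}}{\epsilon^{2}}\;+\;\tfrac{t}{\epsilon}\Bigr)
\end{equation*}
for some absolute $a>0$. Feeding this into Dudley's integral from $n^{-10}$ up to the $\ell_{2}$-diameter $O(t+\delta)$ of $\mathcal M^{\obr}$, and using $\sqrt{x+y}\le\sqrt{x}+\sqrt{y}$ to split the two terms, yields the advertised bound $C(\log n)^{4.5}n^{1/4}\sqrt{(t+\delta)^{2\downarrow}}+C((\log n)^{4}t+n^{-9})$; the $n^{-9}$ piece absorbs the integral on the tiny sub-resolution range $[0,n^{-10}]$, while the diameter bound handles the top of the Dudley integral.

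The main obstacle is tracking the coupling between the boundary cover and the interior cover cleanly. The interior TV bound $\|b\|_{1}+\delta$ depends on the \emph{actual} $b$, so the partition produced by the greedy scheme in Stage~2 is $\theta$-dependent and not determined by the boundary net element $\tilde b$; one must therefore count partitions as in the second half of Lemma~\ref{lem:division} rather than treat them as fixed per $\tilde b$, and simultaneously verify that the 1D approximation error from Stage~1 and the 2D approximation error from Stage~2 can be simultaneously made to fit within the target $\epsilon$. The delicate balance of $\eta_{1}\sim \eta_{2}\cdot m^{1/4}$ (so that both the 1D and 2D Gagliardo contributions are comparable) is what produces precisely the $n^{1/4}$ exponent; any looser choice degrades the rate, and any tighter choice forces too many rectangles. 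Once this balancing is carried out carefully, the remainder of the argument is routine calculus on Dudley's integral and a few uses of the elementary inequality $\sqrt{a+b}\le\sqrt{a}+\sqrt{b}$.
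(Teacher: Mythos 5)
There is a genuine gap: the two\nobreakdash-stage ``cover the boundary, then cover the interior'' scheme does not produce the claimed entropy bound. Conditional on a boundary net element $\tilde b$, the interior matrix is only known to satisfy $\TV(\theta)\le \|b\|_1+\delta\le \sqrt{m}\,t+\delta$, and the generic 2D covering machinery you invoke (Lemma~\ref{lem:division} together with Proposition~\ref{prop:gagliardo}, i.e.\ Proposition~\ref{prop:tvcovering}) gives $\log\mathcal N\sim \TV(\theta)^2/\epsilon^2\sim m\,t^2/\epsilon^2$ for such matrices. Fed into Dudley's integral this yields a Gaussian width of order $\sqrt{n}\,t$ up to logs --- the trivial $\ell_1$ rate --- not the $n^{1/4}$ scaling the proposition asserts. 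Your claim that the one-dimensional boundary cover is ``the engine behind the $n^{1/4}$ factor'' inverts the difficulty: the boundary net is cheap, but the interior net dominates and your proposal supplies no mechanism to bring its log-cardinality down from $n/\epsilon^2$ to $\sqrt{n}/\epsilon^2$. The heuristic $\eta_1\sim\eta_2\, m^{1/4}$ is not a substitute for such a mechanism.

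The paper's proof extracts the $\sqrt{m}$-entropy scaling from two structural facts that your sketch does not use. First (Lemma~\ref{lem:A}), the constraint $\TV(\theta)\le\|\theta_{\rt}\|_1+\delta$ forces the row-variation of the column block at dyadic distance $n_j$ from the active boundary to be at most $2t\sqrt{m/n_j}+\delta$; the variation budget \emph{decays geometrically} as one moves away from the boundary, which is where the eventual gain over the worst-case $\sqrt{m}t$ comes from. Second, even within a single block the naive partition-and-average scheme is insufficient --- the paper's own Remark~\ref{remark:covering_no} notes that Lemma~\ref{lem:prep} alone does not attain the $\sqrt{m}/\tau^2$ scaling for any choice of $k$ --- and one must additionally regroup the rows by the multiplicities of the $(\TVc,\epsilon)$ partition (Lemma~\ref{lem:main}) before covering. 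Without analogues of these two steps, the balancing of $\eta_1$ and $\eta_2$ cannot rescue the argument, and the intermediate entropy bound you write down is unsupported.
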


We will prove the above proposition slightly later. Lemma~\ref{lem:gwnobdry} and Proposition~\ref{prop:gwonebdry} together with Lemma~\ref{lem:gw4to1} now imply (with $\gw^4(m, n, \delta, t)$ denoting the gaussian width of $\M^{4}(m,n,\delta,t)$)
\begin{lemma}\label{lem:4bdrycalc}
	Under the same condition as in the previous proposition, we have 
	\begin{equation*}
	\GW^4(m,n,\delta,t) \leq C (\log \e n)^{9/2} n^{1/4} (\sqrt{t} + \sqrt{\delta} + \delta) + C\big( (\log \e n)^{5} t + n^{-9} \log \e n \big)\,
	\end{equation*}
	where $C > 0$ is a universal constant.
\end{lemma}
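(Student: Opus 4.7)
The plan is to combine the three preceding results in an essentially mechanical fashion. First I would apply Lemma~\ref{lem:gw4to1} to reduce $\GW(\M^{\fbr}(m,n,\delta,t))$ to a sum of Gaussian widths of $\M^{\obr}$ and $\M^{\nbr}$ on dyadically shrinking submatrices with additively growing wiggle rooms. With $m_j \asymp m/2^j$, $n_j \asymp n/2^j$, $K \asymp \log n$ (using the bounded aspect ratio so that $m_j$ and $n_j$ stay comparable), and $\delta^{(j)} \leq \delta + Cjt$ (absorbing the factor $\sqrt{m/n}+\sqrt{n/m}$ from \eqref{eq:finalpartideltaij} into $C$), this gives
\begin{equation*}
\GW(\M^{\fbr}(m,n,\delta,t)) \leq C\sum_{j=1}^{K} \bigl[\gw^{\obr}(m_j,n_j,\delta^{(j)},t) + \gw^{\nbr}(m_j,n_j,\delta^{(j)},t)\bigr].
\end{equation*}

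Next I would handle the $\nbr$ sum routinely via Lemma~\ref{lem:gwnobdry}: plugging in yields $\gw^{\nbr}(m_j,n_j,\delta^{(j)},t) \leq C(\delta^{(j)}(\log n)^2 + t)$, and since $\sum_{j=1}^K \delta^{(j)} \leq C(\log n)\delta + C(\log n)^2 t$ while $Kt \leq C(\log n)t$, the overall $\nbr$ contribution is at most $C((\log n)^3 \delta + (\log n)^4 t)$, easily absorbed into the target bound (the $(\log n)^3\delta$ piece fitting inside $(\log n)^{4.5}n^{1/4}\delta$ since $n \geq 2$).

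The technical core is the $\obr$ sum. Substituting Proposition~\ref{prop:gwonebdry} and applying the elementary inequalities $\sqrt{(t+\delta^{(j)})^{2\downarrow}} \leq \sqrt{t+\delta^{(j)}} + (t+\delta^{(j)})$ and $\sqrt{t+\delta+Cjt} \leq \sqrt{t+\delta} + C\sqrt{jt}$ reduces the dominant piece to a constant times
\begin{equation*}
(\log n)^{4.5}\sum_{j=1}^K n_j^{1/4} \bigl(\sqrt{t+\delta} + C\sqrt{jt} + t + \delta + Cjt\bigr).
\end{equation*}
The key observation is that each of $\sum_j n_j^{1/4}$, $\sum_j n_j^{1/4}\sqrt{j}$, and $\sum_j n_j^{1/4} j$ is a convergent geometric-type sum bounded by $Cn^{1/4}$. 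Hence the display above is at most $C(\log n)^{4.5} n^{1/4}(\sqrt{t}+\sqrt{\delta}+t+\delta)$, and since $t \in (0,1]$ we have $t \leq \sqrt{t}$, so $n^{1/4} t$ is absorbed into $n^{1/4}\sqrt{t}$, recovering exactly the $(\log n)^{4.5} n^{1/4}(\sqrt{t}+\sqrt{\delta}+\delta)$ piece of the claim. The remaining additive pieces contribute $\sum_j C(\log n_j)^4 t \leq C(\log n)^5 t$ and $\sum_j n_j^{-9}$ respectively.

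The main bookkeeping obstacle I anticipate is precisely this last sum of $n_j^{-9}$: if $n_j$ is allowed to shrink to a constant, $n_j^{-9}$ no longer decays and the naive sum blows up. I would resolve this by truncating the dyadic recursion at the first scale $j^*$ where $n_{j^*}$ falls below a fixed constant and replacing Proposition~\ref{prop:gwonebdry} by the trivial volume estimate $\gw^{\obr}(m_j,n_j,\delta^{(j)},t) \leq t\sqrt{m_j n_j} = O(t)$ for all $j > j^*$. These small-scale contributions aggregate to $O((\log n) t)$, which fits inside the $(\log n)^5 t$ piece. For $j \leq j^*$ one has $n_j$ bounded below and may invoke the proposition with its polynomially small remainder genuinely of size $O(n^{-9})$, so summing across at most $K \leq C\log n$ scales yields the stated $n^{-9}\log n$ term. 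Assembling the $\nbr$ and $\obr$ contributions then gives Lemma~\ref{lem:4bdrycalc}.
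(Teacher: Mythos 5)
Your proposal is correct and follows essentially the same route as the paper's own proof: apply Lemma~\ref{lem:gw4to1}, bound the $\nbr$ terms via Lemma~\ref{lem:gwnobdry} and the $\obr$ terms via Proposition~\ref{prop:gwonebdry}, use $\sqrt{a+b}\leq\sqrt a+\sqrt b$ and $\delta^{(j)}\leq \delta+Cjt$, and sum the geometric series $\sum_j (n/2^j)^{1/4}\leq Cn^{1/4}$. Your additional step of truncating the recursion at constant-sized blocks and using the trivial width bound $t\sqrt{m_jn_j}=O(t)$ there is a sensible patch for the $\sum_j (n/2^j)^{-9}$ accounting, which the paper elides by writing $n^{-9}\log n$ directly; either way that contribution is absorbed into the $(\log n)^{5}t$ term.
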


The proof just involves collecting all the relevant terms and adding them up. The reader can safely skip the proof in the first reading.

\begin{proof}
	In this proof, we write $a \lesssim b$ to mean $a \leq C\, b$ for some positive constant $C$ --- depending at most on the aspect ratio $c$ --- whose exact value can change from line to line. 
	Recall that Lemma~\ref{lem:gw4to1} implies for $K \lesssim \log n$,
	\begin{align*}
	\GW^4(m,n,\delta,t) \lesssim \sum_{j = 1}^{K} 
	\big[\GW^1(\frac{m}{2^j},\frac{n}{2^j},\delta^{(j)},t) + \GW^0(\frac{m}{2^j},\frac{n}{2^j},\delta^{(j)},t)\big]\,.
	\end{align*}
	First we compute, in view of Proposition~\ref{prop:gwonebdry},
	\begin{align*}
	\sum_{j = 1}^{K}\GW^1(\frac{m}{2^j},\frac{n}{2^j},&\delta^{(j)},t)\lesssim  (\log \e n)^{9/2} \sum_{j = 1}^{K} (\frac{n}{2^{j}})^{1/4} \big(\sqrt{t + \delta^{(j)}} + t 
	+ \delta^{(j)}\big) + (\log \e n)^5t + n^{-9} \log \e n\\ &\lesssim  (\log \e n)^{9/2} \sum_{j 
	= 1}^{K} (\frac{n}{2^{j}})^{1/4} \big(\sqrt{t} + \sqrt{\delta^{(j)}} + t + \delta^{(j)}\big)  + (\log \e n)^5t + n^{-9} \log n\\&\lesssim  (\log \e n)^{9/2} \sum_{j = 1}^{K} (\frac{n}{2^{j}})^{1/4} \big(\sqrt{t} + \sqrt{\delta + jt} + t + \delta + jt\big) + (\log n)^5t + n^{-9}\log \e n\\&\lesssim (\log \e n)^{9/2} n^{1/4} \big(\sqrt{t} + \sqrt{\delta} + \delta\big) + (\log \e n)^5t + n^{-9} \log \e n
	\end{align*}
	where we have repeatedly used $\sqrt{a + b} \leq \sqrt{a} + \sqrt{b}$ and in the last inequality we have summed up the geometric series. On the other hand, Lemma~\ref{lem:gwnobdry} implies
	\begin{equation*}
	\sum_{j = 1}^{K} \GW^0(\frac{m}{2^j},\frac{n}{2^j},\delta^{(j)},t) \lesssim (\log n)^{5/2}\sum_{j = 1}^{K} (\delta + jt)\lesssim \delta (\log \e n)^{7/2} + (\log \e n)^{9/2} t\,. 
	\end{equation*}
	We can now deduce the lemma from the last two displays.
\end{proof}
With the help of the above lemma we can now conclude the proof of Proposition~\ref{prop:gwupbd}.
\begin{proof}[Proof of Proposition~\ref{prop:gwupbd}]
	
	Throughout this proof we will use the notation $C$ to denote some positive constant --- depending at most on the aspect ratio $c$ like in the previous proof --- whose exact value 
	may change from one line to the next. Also we will use ``$a \lesssim b$'' to mean ``$a \lesssim C b$''. Recall that by Lemma~\ref{lem:imp2}, $\GW(T_{K(V^*)}(\theta^{*}) \cap B_{n,n}(1))$ is at most
	\begin{align}
	\label{eq:GW_bnd}
	\max_{\Delta(\theta^*) \delta: \delta \in S_{k^*,2} \cap H_{k^*}}\: \max_{\bm {t^2} \in S_{k^*,2} \cap H_{k^*}} \:\sum_{i \in [k^*]} \GW^4(m_i, n_i,\Delta(\theta^*) \delta_i ,t_i) + \:\: C \sqrt{k^*}\,.
	\end{align}
	Now we plug in the bound from Lemma~\ref{lem:4bdrycalc} to obtain a bound on the sum inside the two maximums in the above display:
	\begin{align*}
	\sum_{i \in [k^*]} &\GW^4(m_i, n_i,\Delta(\theta^*) \delta_i ,t_i) \\ 
	&\lesssim (\log n)^{{9/2}}\sum_{i \in [k^*]}  n_i^{1/4} \big(\sqrt{t_i} + (\Delta(\theta^*) \delta_i)^{1/2} + \Delta(\theta^*) \delta_i\big) + (\log n)^5 \sum_{i \in [k^*]}t_i + k^* n^{-9}\log n\,.
	\end{align*}
	Since the aspect ratios of each of the rectangular level sets of $\theta^*$ are bounded by a constant, we have 
	$\sum_{i = 1}^{k^*} {n_i}^2 \lesssim n^2$. This can be seen as follows:
	$$\sum_{i = 1}^{k^*} n_i^2 \lesssim \sum_{i = 1}^{k^*} n_i m_i \lesssim mn \lesssim n^2.$$
	
	Therefore, we can repeatedly apply the Cauchy-Schwarz inequality to deduce for $\delta, \,\bm {t^2} \in S_{k^*,2}$,  
	\begin{align*}
	&\sum_{i = 1}^{k^*} n_i^{1/4} \sqrt{t_i} \lesssim (k^*)^{5/8} n^{1/4}\,,\:\:\: \sum_{i = 1}^{k^*} n_i^{1/4} \delta_i^{1/2} \lesssim (k^*)^{3/8} n^{1/4}\,,  \\&
	\sum_{i = 1}^{k^*} n_i^{1/4} \delta_i \lesssim n^{1/4} \:\:\text{and}\:\: \sum_{i = 1}^{k^*} t_i \lesssim \sqrt{k^*}\,.
	\end{align*} 
	Also because of constant aspect ratio, we have $$\Delta(\theta^{*}) = \sqrt{\sum_{i = 1}^{k^*} 2 \big(\frac{m_i}{n_i} + \frac{n_i}{m_i}\big)} \lesssim 
	\sqrt{k^*}\,.$$ Combining the last two displays we notice that 
	$(\log  n)^{9/2}(k^*)^{5/8}n^{1/4}$ emerges as the dominant term and hence
	\begin{align*}
	&\:\sum_{i \in [k^*]} \GW^4(m_i, n_i,\Delta(\theta^*) \delta_i ,t_i) \lesssim (\log n)^{9/2} (k^*)^{5/8} n^{1/4}\,.
	\end{align*}
	Together with \eqref{eq:GW_bnd} this finishes the proof.
\end{proof}

All that remains towards the proof of Proposition~\ref{prop:gwupbd} is Proposition~\ref{prop:gwonebdry}. The proof of this proposition is fairly involved. The rest of this section is devoted to its proof.

\subsection{Proof of Proposition~\ref{prop:gwonebdry}} \label{sec:onebdry}
By symmetry, it is enough to bound $\gw(\mathcal M^{\rt}(m,n,\delta,t))$. To this end, let us introduce a new class of matrices as follows:
\begin{equation*}
\mathcal{A}(m,n,u,v,t) \coloneqq \{\theta \in \R^{m \times n}: \TVr(\theta) \leq u, \TVc(\theta) \leq v, \|\theta\| \leq t\}\,
\end{equation*}
where, let us recall, that the total variation $\TVr(\theta)$ along rows is defined as
$$\TVr(\theta) \coloneqq \sum_{i \in [m]}\sum_{j \in [n-1]} |\theta[i, j + 1] - \theta[i, j]|$$
and $\TVc(\theta) \coloneqq \TVr(\theta^T)$.



The following lemma gives an upper bound of $\gw(\mathcal M^{\rt})$ in terms of the Gaussian widths of $\mathcal{A}$ with appropriate parameters.  
\begin{lemma}\label{lem:A}
	Let $k$ denote the smallest integer satisfying $(1 + 2 + \dots 2^{k}) \geq n$. Then we have the following inequality:
	$$\gw^\rt(m,n,\delta,t) \leq \sum_{j \in [k]} \gw(\mathcal A(m,\: n_j,\: 2t\sqrt{m / n_{j}} + \:\delta, \:t\sqrt{m / n} + \delta,\:t))\,,$$
	where $n_j = 2^j$ for $j \in [k-1]$ and $n = \sum_{j \in [k]}n_j$.
\end{lemma}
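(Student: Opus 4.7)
The plan is to decompose any $\theta \in \mathcal M^{\rt}(m,n,\delta,t)$ column-wise into $k$ strips $\theta^{(1)},\ldots,\theta^{(k)}$, ordered from right to left and of widths approximately $n_1,\ldots,n_k$, verify that each strip lies in $\mathcal A(m,n_j,u_j,v_j,t)$ with $u_j = 2t\sqrt{m/n_j}+\delta$ and $v_j = t\sqrt{m/n}+\delta$, and then invoke subadditivity of the Gaussian width. Writing $\langle Z,\theta\rangle = \sum_j \langle Z^{(j)},\theta^{(j)}\rangle$ where $Z^{(j)}$ is the corresponding column-strip of $Z$, bounding each summand by the supremum over the appropriate $\mathcal A$-class and taking expectations yields the claim.

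To choose the strip boundaries adaptively I would use the energy constraint $\sum_{c}\|\theta[,c]\|_2^2 \leq t^2$. By a simple averaging argument, inside any window of $\ell$ consecutive columns one finds a column $c$ with $\|\theta[,c]\|_2 \leq t/\sqrt{\ell}$, and hence $\|\theta[,c]\|_1 \leq t\sqrt{m/\ell}$ by Cauchy--Schwarz. For each $j\geq 2$ I pick the right boundary $E_j$ of strip $j$ from a window of size $\Theta(n_j)$ around the target location, so that $\|\theta[,E_j]\|_1 \leq 2t\sqrt{m/n_j}$; the actual strip widths remain $\Theta(n_j)$, and any constant factor of slack can be absorbed into the class $\mathcal A$.

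The key algebraic ingredient is the restriction inequality: for every column $c$,
\begin{equation*}
\TV(\theta[:,1{:}c]) \leq \|\theta[,c]\|_1 + \delta.
\end{equation*}
This follows by combining the edge-decomposition $\TV(\theta)=\TV(\theta[:,1{:}c])+\TV(\theta[:,c{+}1{:}n])+\sum_i|\theta[i,c{+}1]-\theta[i,c]|$ with the row-wise triangle bound $\|\theta_\rt\|_1 \leq \|\theta[,c]\|_1 + \TVr(\theta[:,c{:}n])$ and the defining constraint $\TV(\theta)\leq\|\theta_\rt\|_1+\delta$; the interface term and $\TV(\theta[:,c{+}1{:}n])$ cancel after rearrangement. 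In particular, left-restrictions of an $\mathcal M^\rt$ matrix are again $\mathcal M^\rt$-like with the new right boundary being column $c$. Specialising to $c=E_j$ and noting $\theta^{(j)}\subseteq\theta[:,1{:}E_j]$ gives $\TV(\theta^{(j)})\leq \|\theta[,E_j]\|_1+\delta \leq 2t\sqrt{m/n_j}+\delta$, which upper bounds both $\TVr(\theta^{(j)})$ and $\TVc(\theta^{(j)})$ by $u_j$. The case $j=1$ is handled separately using $\|\theta_\rt\|_1 \leq \sqrt{m}\,t$, which is absorbed into $u_1 = t\sqrt{2m}+\delta$ up to a universal constant.

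The main obstacle is the finer column-TV bound $v_j = t\sqrt{m/n}+\delta$, which is $\sqrt{n_j/n}$ times tighter than the symmetric row bound and therefore cannot come from the restriction identity applied at $E_j$ alone. My approach is to single out a globally lightest column $c^\star \in [n]$ with $\|\theta[,c^\star]\|_1 \leq t\sqrt{m/n}$ (which exists by the averaging bound applied over all $n$ columns) and apply the restriction identity at $c^\star$ to obtain a uniform vertical-TV control for any sub-rectangle contained in $[:,1{:}c^\star]$, then patch this together with the row-wise argument above for the strips lying to the right of $c^\star$. A secondary technical point is that the adaptive strip widths may slightly differ from the nominal $n_j$; this is benign because $\gw(\mathcal A(m,n',u,v,t))$ is of the same order for $n'=\Theta(n_j)$, so the corresponding classes are comparable up to constants.
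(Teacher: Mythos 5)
Your overall strategy --- geometric column strips ordered from the right, light columns produced by averaging the energy constraint, and a restriction inequality converting $\TV(\theta)\le\|\theta_{\rt}\|_1+\delta$ into TV control of left sub-blocks --- is the same as the paper's, and your restriction inequality is correct (it is essentially the cancellation the paper performs). But there are two concrete problems with the execution. First, choosing the strip boundaries $E_j$ adaptively in $\theta$ breaks the step you rely on at the end: the decomposition $\langle Z,\theta\rangle=\sum_j\langle Z^{(j)},\theta^{(j)}\rangle$ followed by $\E\sup_\theta\sum_j(\cdots)\le\sum_j\E\sup(\cdots)$ requires the assignment of columns of $Z$ to the $j$-th summand to be the \emph{same for every} $\theta$. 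With $\theta$-dependent boundaries you must first maximize over all admissible partitions, which costs an extra additive term of order $t\sqrt{k\log n}$ (via a device like Lemma~\ref{lem:Guntu}) and therefore does not give the lemma as stated; your remark that the widths stay $\Theta(n_j)$ does not address this. The adaptivity is also unnecessary: with a \emph{fixed} partition, the light column $a$ guaranteed inside the adjacent strip $j-1$ (which sits to the right of strip $j$) satisfies $\|\theta[\,,a]\|_1\le t\sqrt{m/n_{j-1}}\le 2t\sqrt{m/n_j}$, your restriction inequality applied at $c=a$ gives $\TV(\theta[\,,1{:}a])\le\|\theta[\,,a]\|_1+\delta$, and strip $j$ is a sub-block of $\theta[\,,1{:}a]$, so $\TV(\theta^{(j)})\le 2t\sqrt{m/n_j}+\delta$. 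This is exactly the paper's argument and removes any need to place the light column at a boundary.

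Second, your treatment of the column-variation bound $v_j=t\sqrt{m/n}+\delta$ is incomplete: applying the restriction identity at the globally lightest column $c^\star$ only controls sub-rectangles of $\theta[\,,1{:}c^\star]$, and the proposed ``patching'' for strips lying to the right of $c^\star$ is not an argument. The fix is to keep the term you discarded when deriving the restriction inequality: the same rearrangement yields $\TVc(\theta[\,,1{:}c])+\TVc(\theta[\,,c{+}1{:}n])\le\|\theta[\,,c]\|_1+\delta$, i.e.\ $\TVc(\theta)\le\|\theta[\,,c^\star]\|_1+\delta\le t\sqrt{m/n}+\delta$ globally, whence $\TVc(\theta^{(j)})\le\TVc(\theta)\le v_j$ for every strip simultaneously. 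This global bound is what the paper proves directly. With these two repairs your proof coincides with the paper's.
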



\begin{proof}
	The proof proceeds by dividing the $n$ columns into blocks of geometrically increasing length and showing that for any $\theta \in \mathcal M^{\rt}(m,n,\delta,t)$ the submatrices defined by the blocks live in $\mathcal{A}$ with appropriate parameters. Let $\theta \in \mathcal M^\rt(m,n,\delta,t)$ and subdivide $\theta$ into submatrices $[\theta^{(k)} \rvert \theta^{(k-1)}\rvert\cdots \rvert \theta^{(1)}]$ where 
	$\theta^{(j)}$ has $n_j$ many columns. Therefore it suffices to prove that $$\theta^{(j)} \in \mathcal A(m,\: n_j,\: t\sqrt{m / n_{j-1}} + \:\delta, \:t\sqrt{m/n} + 
	\delta,\:t)$$ for all $j \in [k]$ as $\sqrt{n_j / n_{j-1}} < 2$. Since $\norm{\theta^{(j)}} \leq \norm{\theta} \leq t$, we only need to verify the required bounds on $\TVc(\theta^{(j)})$ and $\TVr(\theta^{(j)})$.
	
	\medskip
	
	\noindent{\em Verifying the bound on $\TVc(\theta^{(j)})$\,.} We will prove the stronger statement $\TVc(\theta) \leq t\sqrt{m/n} + \delta$. Since $\norm{\theta} \leq t$ and 
	$\norm{\theta}^2 = \sum_{\ell \in [n]}\norm{\theta[\,, 
		\:\ell]}^2$, it follows that $\norm{\theta[\,, 
		\:\ell^*]} \leq t / \sqrt{n}$ for some $\ell^* \in [n]$ and hence $\norm{\theta[\,, \:\ell^*]}_1 \leq t\sqrt{m/n}$ by 
	the Cauchy-Schwartz inequality. Now using the condition that  $\TV(\theta) \leq \norm{\theta[\,,\:n]}_1 + \delta$ (from the definition of $\mathcal M^\rt(m,n,\delta,t)$), we get
	\begin{align}
	\label{eq:lem:main1}
	\norm{\theta[\,,\:n]}_1 + \delta - \TVc(\theta) &\geq \TV(\theta) - \TVc(\theta) = \TVr(\theta) \geq \norm{\theta[\,,\:n] -\theta[\,,\:\ell^*]}_1 \nonumber\\
	&\geq \norm{\theta[\,,\:n]}_1  - \norm{\theta[\,,\:\ell^*]}_1 \geq \norm{\theta[\,,\:n]}_1  - t\sqrt{m/n}\,.
	\end{align}
	Thus $\TVc(\theta) \leq \delta + t\sqrt{m/n}$. 
	
	\medskip
	
	\noindent{\em Verifying the bound on $\TVr(\theta^{(j)})$\,.} Let us start with $\theta^{(1)}$. By the Cauchy-Schwartz inequality, $\norm{\theta[\,,\:n]}_1 \leq \sqrt{m} \norm{\theta[\,,\:n]}_2 \leq t\sqrt{m}$ and thus 
	$$\TVr(\theta^{(1)}) \leq \TV(\theta) \leq \norm{\theta[\,,\:n]}_1 \leq t\sqrt{m}\,.$$
	Next consider $\theta^{(j)}$ for some $j \geq 2$. Since $\norm{\theta^{(j-1)}}_2^2 \leq t$ and it has $n_{j-1}$ columns, there is a column of $\theta^{(j-1)}$ whose 
	$\ell_2$-norm is at most $t / \sqrt{n_{j-1}}$. Suppose this column is $\theta[\,,\:a]$. Then a calculation similar to \eqref{eq:lem:main1} yields,
	\begin{align*}
	&\norm{\theta[\,,\:n]}_1 + \delta \geq \TV(\theta) \geq \TVr(\theta) \geq \TVr([\theta^{(j-1)} \rvert \theta^{(j-2)}\rvert\cdots \rvert \theta^{(1)}]) + \TVr(\theta^{(j)}) \\\geq & \:\norm{\theta[\,,\:n] - \theta [\,,\:a]}_1 + \TVr(\theta^{(j)}) \geq \norm{\theta[\,,\:n]}_1 - \norm{\theta[\,,\:a]}_1 + \TVr(\theta^{(j)}).
	\end{align*}
	But this implies, along with the Cauchy-Schwartz inequality, that
	\begin{equation*}
	\TVr(\theta^{(j)}) \leq \norm{\theta[\,,\:a]}_1 + \delta \leq \sqrt{m} \norm{\theta[\,,\:a]}_2 + \delta \leq t\sqrt{m/n_j} + \delta\,. \qedhere
	\end{equation*}
\end{proof}

It therefore suffices, in view of the previous lemma, to bound the gaussian width of each $\mathcal A(m, n_j,$ 
$2t\sqrt{m/n_j} + \:\delta, \:t\sqrt{m/n} + \delta,t)$ from above in order to bound $\gw^\rt(m, n, \delta, t)$.
Defining $a = \sqrt{m / n_{j}}$, we can write 
$$\mathcal A(m, n_j, 2t\sqrt{m/n_j} + \:\delta, \:t\sqrt{m/n} + \delta,t) = \mathcal A(m, m/a^2, 2ta + \:\delta, \:t\sqrt{m / n} + \delta,t) \eqqcolon \mathcal A_{a}\,.$$
Notice that we suppressed the dependence on $m, n, \delta$ and $t$ which henceforth refer to the corresponding parameters in Proposition~\ref{prop:gwonebdry}.

In our next result, which is crucial for the proof of Proposition~\ref{prop:gwonebdry}, we give a subspace cover for the set $\mathcal A_a$ corresponding to any distance $\tau$ between {$1 / m$} and 1.
\begin{lemma}\label{lem:main}
	Let $t \leq 1$, $\tau \in [1/m, 1]$ and {{$a \geq c$}} be such that $m /a^2$ is a positive integer between $1$ and $n$. Here $c$ is from the statement of Proposition~\ref{prop:gwonebdry}. Then there exists a $\tau$ subspace cover $\mathcal S_{\tau}$ of $\mathcal A$, depending on $m, n, a, \delta$ and $t$ in addition to $\tau$, and a constant $C > 0$ depending solely on $c$ such that
	$$\max(\log |S_\tau|,\, \max_{S \in S_\tau} \dim(S)) \le C(\log(\e m))^{3}\mathcal L_m\big ( 1 + \frac{\sqrt{m}C_{m, n, \delta, t}}{\tau^2}\big)$$
	where $\mathcal L(x) \coloneqq x \log (\e \log(\e m)^2 x)$ and 
	$$C_{m, n, \delta, t} \coloneqq \log(\e m)\big(t\sqrt{\frac{m}{n}} + t + \delta\big)^{2\downarrow} \,$$
	(recall that $x^{2\downarrow} \coloneqq x + x^2$). 
	%
	%
	%
\end{lemma}

\begin{remark}\label{remark:main_lead}
	Notice that $\mathcal L(x)$ is linear in $x$ ignoring the $\log$ factors. Thus it is helpful to read the above bound as scaling like $\tfrac{\sqrt{m}}{\tau^2}$ up to log 
	factors and the lower order terms. This $\sqrt{m}$-scaling is crucial for us in order to derive the $1/4$ exponent of $n$ in Proposition~\ref{prop:gwonebdry} and subsequently the correct exponent of $n$ in Theorem~\ref{thm:adap}.
\end{remark}
\begin{remark}\label{remark:main}
	The reason for assuming a polynomial lower bound (in $m$) on $\tau$ is that we want $\log (1 /\tau)$ to be at most $O(\log m)$. Hence the bounds of Lemma~\ref{lem:main} remain valid, with appropriate changes in $C$, as long as $\tau \geq 1 / m^c$ for some universal constant $c > 0$. 
\end{remark}

With Lemma~\ref{lem:main} we can now finish the proof of Proposition~\ref{prop:gwonebdry}. 
\begin{proof}[Proof of Proposition~\ref{prop:gwonebdry}]
	An important feature of the bounds in Lemma~\ref{lem:main} is that it does not depend on $a$. Hence an application of Proposition~\ref{prop:chaining} would yield the same bound on each Gaussian width appearing inside the summation in the 
	statement of Lemma~\ref{lem:A}. From this we can deduce Proposition~\ref{prop:gwonebdry} in a straightforward manner. The detailed computation is given below. In the remainder of the proof we will use $C$ to denote any positive constant depending {\em at most} on $c$ whose exact value may change from one line to the next.
	
	Applying Proposition~\ref{prop:chaining} with $k_0 = \lfloor - \log_2 2t\rfloor$ and $k_1 = -\lceil \log_2 \nu \rceil$ where $\nu = t/m \vee m^{-10}$ and using Lemma~\ref{lem:main} subsequently to bound the relevant terms (see Remark~\ref{remark:main_lead}), we get
	\begin{align*}
	\GW(\mathcal A_a) \leq C\sum_{k = k_0 + 1}^{k_1} 2^{-k}(\log(\e m))^{1.5}\sqrt{\mathcal L_m\big ( 1 + 2^{2k}\,\sqrt{m}\,C_{m, n, \delta, t}\big)} \, + \, \sqrt{m n}\,\nu\,.
	\end{align*}
	Now recalling the definition of $\mathcal L_m(\cdot)$, we can write
	\begin{align*}
	\mathcal L_m&\big( 1 + 2^{2k}\sqrt{m}C_{m, n, \delta, t}\big) = \big(1 + 2^{2k}\sqrt{m}C_{m, n, \delta, t}\big) \:\big(1 + \log \log (em)^2 + \log 2^{2k}\sqrt{m}C_{m, n, \delta, t}\big) \\& \leq\big(1 + 2^{2k}\sqrt{m}C_{m, n, \delta, t}\big) \:\big(1 + \log \log (em)^2 + \log (m^{21} C_{m, n, \delta, t})\big) \\& \leq C \log (\e m (1 + \delta))\,(1 + 2^{2k}\sqrt{m}C_{m, n, \delta, t})
	\end{align*}
	where in the last inequality we used the fact that $C_{m,n,\delta,t} \leq C (1 + \delta) \log (\e m) $ since $t \leq 1$ and $m/n$ is assumed to be bounded by a constant. The last two displays therefore imply
	\begin{align*}
	\GW(\mathcal A_a) &\leq C (\log (\e m))^{1.5}\sqrt{\log (\e m (1 + \delta))} \,\, (t + m^{1/4} \log (\e m) \sqrt{C_{m, n, \delta, t}}\,)  + \sqrt{tn/m} + \sqrt{n} / m^{20}\\& 
	\leq C (\log \e n)^{3.5} n^{1/4}\sqrt{(t + \delta)^{2\downarrow}} + C (\log \e n)^3 t + C n^{-9.5}\,
	\end{align*}
	where in the final step we used the fact that $\delta \in (0, n]$ as well as 
	$\max\{m/n, n/m\} \le c$. 
	The proposition now follows from summing this bound over $k$ as in Lemma~\ref{lem:A}.
\end{proof}

The thing that remains to be done is the proof of Lemma~\ref{lem:main}. 
An important ingredient is the following weaker analogue for the general case.
\begin{lemma}\label{lem:prep}
	Let $k, m , n$ be positive integers with $1 \le k \le m$ (not to be confused with the parameters in Lemmata~\ref{lem:A} -- \ref{lem:main}). Also let $t \leq 1$ and $u, v, \tau > 0$. Then there exists a $\tau$ subspace cover $S_\tau$ of $\mathcal A(m, n, u, v, t)$, depending on $m, n, k, u, v$ and $t$ in addition to $\tau$, and a universal constant $C > 0$ such that
	$$\max(\log |S_\tau|,\, \max_{S \in S_\tau} \dim(S)) \le C\big(J_k + \sqrt{J_k} \frac{v\sqrt{m}}{\tau\sqrt{k}}\big)\log\big(\e mJ_k + \e m\sqrt{J_k} \frac{v\sqrt{m}}{\tau\sqrt{k}}\big)
	$$
	when $k < m$, whereas for $k = m$
	$$\max(\log |S_\tau|,\, \max_{S \in S_\tau} \dim(S)) \le CJ_k\log(\e mJ_k). 
	$$
	Here
	$$J_k \coloneqq C \log (\e n)\big(k + \frac{u\sqrt{nk}}{\tau}\big).$$
	%
	%
	%
\end{lemma}

\begin{remark}
	\label{remark:covering_no}
	Lemma~\ref{lem:prep}, by itself, is not sufficient to prove Lemma~\ref{lem:main}. To see this, let us plug in $n = m / 
	a^2$ and $u = 2ta$ in the expression for $J_k$. One can easily check that while this makes $J_k$ free from $a$, the principal terms in the bounds on the dimension and cardinality do not attain the required $\sqrt{m}$-scaling for any choice of $k$.
\end{remark}

In the course of proving Lemma~\ref{lem:prep}, we will repeatedly use a subdivision scheme 
based on the value of either $\TVr$ or $\TVc$. We will also use it in the proof of 
Lemma~\ref{lem:main} and therefore describe it here in a general setting. Let us point out 
that a very similar scheme was described in Section~\ref{sec:division_proof} in the context of proving Theorem~\ref{Thm:1}.

\smallskip

{\bf A greedy partitioning scheme: ~}Consider a set $\mathcal S$ and a function $T: \cup_{n \in \N}\mathcal S^n \mapsto \R_{\geq 0}$ satisfying $T(AB) \geq T(A) + T(B)$ for all $A, B \in \cup_{n \in \N} \mathcal S^n$ where $AB$ denotes the concatenation of $A$ and $B.$ Also suppose for any singleton $s \in S,$ the function 
$T$ satisfies $T(s) = 0$. To relate this to a concrete example, the reader may consider the case where $\mathcal S = \R^m$ so that $\mathcal S^n \equiv \R^{m \times n}$ and $T$ is the function $\TVr$. Now for any $\ep > 0$, the \emph{$(T, \ep)$ scheme} subdivides an element $U$ of $\cup_{n \in \N} \mathcal S^n$ as $U_1U_2\cdots U_K$ such that $T(U_i) \leq \ep$ for 
all $i \in [K]$. This is achieved in several steps of binary division as follows. In the first step, we check whether $T(U) \leq \epsilon.$ If so, then stop and output $U.$ Else, divide $U$ as $U'_1U'_2$ into two almost equal parts. This means $|U'_1| = \lfloor |U|/2 \rfloor$ and $|U'_2| = |U'| - |U'_1|.$ In each step, we have a representation of $U$ of the form $U'_1U'_2\cdots U'_{K'}.$ We consider each $i \in 
[K']$ such that $T(U'_i) > \ep$ and subdivide $U'_i$ into two almost equal parts. We repeat this procedure until each part $U'$ in the current representation satisfies $T(U') \leq \ep$.

Suppose that $|U| = n$. The subdivision of $U$ produced by the $(T, \ep)$ scheme 
corresponds to a partition of $[n]$ into contiguous blocks, say, $P_{U; T, \ep}$. Let $|P_{U; T, \ep}|$ denote the number of blocks of the partition $P_{U; T, \ep}.$ Now for $t > 0$, let $\mathcal P(t, n, \ep, T)$ denote the set of partitions $\{P_{U; T, \ep}: U \in \mathcal S^n, T(C) \leq t\}$. A key ingredient in the proof of Lemma~\ref{lem:prep} (and subsequently Lemma~\ref{lem:main}) is the following universal upper bound on the cardinality of $P_{U; T, \ep}$.
\begin{lemma}\label{lem:division2}
	Then for the $(T, \ep)$ division scheme we have 
	\begin{equation*}
	\max_{P \in \mathcal P(t, n, \ep, T)} |P_{U; T, \ep}| \leq \log_2 (4n) (1 + \frac{t}{\epsilon})\,.
	\end{equation*}
\end{lemma}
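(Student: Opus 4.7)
The plan is to view the $(T,\epsilon)$-division scheme as building a binary tree whose root is $U$, whose internal nodes are precisely the blocks $B$ encountered during the scheme with $T(B) > \epsilon$ (each such node has its two almost-equal halves as children), and whose leaves form the final partition $P_{U;T,\epsilon}$. I will bound the depth of this tree and the number of internal nodes at each depth, and then combine the two to bound the number of leaves.

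First, I control the depth. Since each split replaces a block of size $m$ by two blocks of sizes $\lfloor m/2\rfloor$ and $\lceil m/2\rceil$, after $d$ generations every block has size at most $\lceil n/2^d\rceil$. Because $T(s)=0\le\epsilon$ for any singleton $s$ by hypothesis, a block of size $1$ is never split further. Consequently the depth $D$ of the tree satisfies
\[
D \le \lceil \log_2 n\rceil \le \log_2(2n).
\]

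Next, I use super-additivity of $T$ to control the number of splits at each depth. Iterating the inequality $T(AB)\ge T(A)+T(B)$ yields that for any contiguous partition of $U$ into blocks $B_1,\dots,B_k$ one has $\sum_i T(B_i)\le T(U)\le t$. Applied to the collection of blocks present at depth $d$ (which, together with leaves that terminated earlier, partition $U$), this forces the number $s_d$ of depth-$d$ blocks with $T>\epsilon$ --- equivalently, the number of depth-$d$ internal nodes --- to satisfy $s_d \le t/\epsilon$.

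Finally, since the tree is binary, the number of leaves equals one plus the total number of internal nodes, so uniformly over $U \in \mathcal S^n$ with $T(U)\le t$,
\[
|P_{U;T,\epsilon}| \;=\; 1 + \sum_{d=0}^{D-1} s_d \;\le\; 1 + D\cdot\frac{t}{\epsilon} \;\le\; 1 + \log_2(2n)\cdot\frac{t}{\epsilon} \;\le\; \log_2(4n)\bigl(1 + \tfrac{t}{\epsilon}\bigr),
\]
which gives the lemma. I do not anticipate any genuine obstacle: the only slightly non-routine ingredient is the super-additivity observation, which is precisely what certifies that at most $t/\epsilon$ current blocks can exceed the threshold at any given depth; the rest is elementary binary-tree bookkeeping, and the modest constant $\log_2(4n)$ (as opposed to $\log_2(2n)$) is exactly the slack needed to absorb the ``$+1$'' from counting leaves versus internal nodes.
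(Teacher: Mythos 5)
Your proof is correct and follows essentially the same route as the paper's: super-additivity of $T$ bounds the number of blocks split at each round/depth by $t/\epsilon$, the halving of block sizes bounds the number of rounds by $\lceil\log_2 n\rceil$, and the leaf count is one plus the total number of splits. The binary-tree framing is only a cosmetic repackaging of the paper's round-by-round recursion (indeed the paper uses the same tree language in the proof of its 2D analogue, Lemma~\ref{lem:division}).
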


The proof of Lemma~\ref{lem:division2} is very similar to that of Lemma~\ref{lem:division}. Nevertheless, for the sake of completeness, we provide its proof in the appendix (see Section~\ref{subsec:division2}). We also defer the proof of Lemma~\ref{lem:prep} to the end of this subsection and finish the proof of Lemma~\ref{lem:main} assuming it.
\begin{proof}[Proof of Lemma~\ref{lem:main}]
	Take any 
	$\theta \in \mathcal{A}_{a}$ 
	and fix $\ep \in (0, 1)$ whose precise value based on $\tau$ would be chosen later. Let us denote the $m' \times n'$ two dimensional grid (graph) by $L_{m', n'}$ 
	and subdivide $L_{m, m/a^2}$ as
	\begin{equation}
	\label{eq:K}
	L_{m, m/a^2} = \begin{bmatrix}
	R_1 \\
	R_2 \\
	\vdots \\
	R_K
	\end{bmatrix}
	\end{equation}
	where $\TVc(\theta_{|R_i}) \leq \ep$ for all $i \in [K]$ and $K \leq \log_2 (4m) (1 + \TVc(\theta)\ep^{-1})$. We achieve this by applying the $(\TVc,\epsilon)$ division scheme to the rows of $\theta$ (see Lemma~\ref{lem:division2}). Denoting the set of all possible partitions of $L_{m, m/a^2}$ obtained in this manner by $\mathcal P$, we deduce
	\begin{equation}
	\label{eq:mathcalP1}
	|\mathcal P| \leq m^{\log_2 (4m) (1 + \,(t\sqrt{\tfrac{m}{n}} + \delta)\ep^{-1})}\,.
	\end{equation}
	Corresponding to the partition $P = P(\theta)$ in \eqref{eq:K}, let $S_{P, \rm row}$  denote the linear subspace of $\R^{m \times m/a^2}$ comprising only matrices having identical rows in each $R_i$. It is clear that the orthogonal projection of $\theta$ onto $S_{P, \rm row}$ is given by
	\begin{equation*}
	\hat \theta_{S_{P, \rm row}} = \tilde \theta = \begin{bmatrix}
	\tilde \theta_1 \\
	\tilde \theta_2 \\
	\vdots \\
	\tilde \theta_K
	\end{bmatrix}\,
	\end{equation*}
	where each row of $\tilde \theta_i \coloneqq \tilde \theta_{|R_i}$ is equal to the average row of $\theta_{R_i}$. By repeated application of Lemma~\ref{prop:1dtvapprox} (stated and proved in the appendix), we obtain
	\begin{equation}
	\label{eq:approx1}
	{\rm dist}(\theta, S_{P, \rm row}) = \|\theta - \tilde \theta\|_2 \leq  
	\sqrt{m}\ep\,. 
	\end{equation}
	Also by standard properties of orthogonal projections, it follows that $\|\tilde \theta\|_2 \leq \|\theta\|_2 \leq t$. We further claim that 
	$\tilde \theta \in \mathcal A_{a} \equiv \mathcal A(m, \frac{m}{a^2}, 2 t a + 
	\:\delta, \:t\sqrt{\frac{m}{n}} + \delta,t)$. Hence to establish this claim we only need to show that $\TVr(\tilde \theta) \leq \TVr(\theta)$ and $\TVc(\tilde \theta) 
	\leq \TVc(\theta)$. We can obtain the first inequality as follows:
	\begin{align}
	\label{eq:preserve_tvr}
	\TVr(\tilde \theta) &= \sum_{i \in [K]}\nr(R_{i})\sum_{\ell \in [\nc(R_{i}) - 1]}|\tilde \theta_{i}[1, \ell + 1] - \tilde \theta_{i}[1, \ell]| \nonumber \\
	&=\sum_{i \in [K]}\nr(R_{i})\sum_{\ell \in [\nc(R_{i}) - 1]}\Big \vert \nr(R_{i})^{-1}\sum_{i' \in [\nr(R_{i})]}\big(\tilde \theta_{i}[i', \ell + 1] - \tilde \theta_{i}[i', \ell]\big)\Big \vert \nonumber \\
	&\leq \sum_{i \in [K]}\sum_{\ell \in [\nc(R_{i}) - 1]} \sum_{i' \in [\nr(R_{i})]}\vert\tilde \theta_{i}[i', \ell + 1] - \tilde \theta_{i}[i', \ell]\vert = \TVr(\tilde \theta)\,.
	\end{align}
	For the second inequality we just apply Lemma~\ref{approxdecreasestv} (stated and proved in the appendix section) to each column of $\theta$. 

	In the rest of the article we call a subset of $L_{m' \times n'}$ a {\em subgrid} if it is a product of subsets (as opposed to only subintervals) of $[1, m'] \cap \N$ and $[1, n'] \cap \N$ respectively. We will now regroup $R_i$'s into several subgrids. For any positive integer $\ell$ such that $2^\ell \leq {2}m$, define {the set} $S_\ell \coloneqq \big\{i \in [K]: 2^{\ell - 1} \leq \nr(R_i) < 2^\ell\big\}$ and let $B_\ell$ be 
	the vector which is the sorted version of $S_\ell$. Now consider the {\em subgrid} of $L_{m, m / a^2}$
	$$R^\ell \coloneqq \begin{bmatrix}
	R_{B_\ell(1)} \\
	R_{B_\ell(2)} \\
	\vdots \\
	R_{B_\ell(K_\ell)}
	\end{bmatrix}\,$$
	where $K_\ell \coloneqq |B_\ell|$. In words, $R^\ell$ comprises the rectangles $R_i$, in order, whose number of rows lies between $2^{\ell - 1}$ and $2^\ell$. It is clear 
	that $R^1, R^2, \dots, R^L$ are disjoint subgrids of $L_{m, n}$ where $L \leq 
	\log_2(2 m)$. 
	Let us also denote $\tilde \theta_{|R^\ell}$ by $\tilde \theta^\ell$. Notice that if the matrices $\hat \theta^1, \hat \theta^2, \dots, \hat \theta^L$ satisfy $\|\tilde \theta^\ell - \hat \theta^\ell \| \leq \sqrt{m}\ep$ for all $\ell \in [L]$ and $\hat \theta \in \R^{m \times m/a^2}$ is such that $\hat \theta_{|R^\ell} = \hat \theta^\ell$ for all $\ell \in [L]$, then we have
	\begin{equation}
	\label{eq:tau_ep}
	\|\theta - \hat \theta\| \leq \|\theta - \tilde \theta\| + \|\tilde \theta - \hat \theta\|\overset{\eqref{eq:approx1}}{\leq} \sqrt{m\epsilon^2} + \sqrt{m \log_2(2 m) \epsilon^2} \leq \:\sqrt{2\:m \log_2(4m)}\:\epsilon\,.
	\end{equation}
	We now choose $\ep$ by requiring this approximation error to be $\tau$, i.e., by setting $\ep = \tau / \sqrt{2\:m \log_2(4m)}$ (notice that $1 /4m^{2} \leq \ep \leq 1 / 
	\sqrt{m}$ when $\tau \in [1 / m, 1]$). Therefore if $\mathcal S_{\tau, P}^\ell$ is a $\sqrt{m}\epsilon$ subspace cover for the family $\mathcal A_{\ell, P}^*$ (say) of matrices $\tilde \theta^\ell$ corresponding to $P \in \mathcal P$ and $\ell \in [L]$, we can immediately obtain a $\tau$ subspace cover $S_\tau$ for $\mathcal A_{a}$ satisfying:
	\begin{equation}
	\label{eq:covering_number_main0}
	\max_{S \in S_\tau} \dim(S) \leq \max_{P \in \mathcal P} \sum_{\ell \in [L]} \max_{S \in \mathcal S_{\tau, P}^\ell} \dim(S)
	\end{equation}
	and
	\begin{equation}
	\label{eq:covering_number_main}
	|\mathcal S_\tau|\leq |\mathcal P|\,.\max_{P \in \mathcal P} \prod_{\ell \in [L]}|\mathcal S_{\tau, P}^\ell|.
	\end{equation}
	
	Now fix a $P \in \mathcal P$ and let $\Theta^\ell$ denote the matrix formed by the first (or any) rows of $\tilde \theta_{B_\ell(1)}, \tilde \theta_{B_\ell(2)}, \dots,$ $\tilde \theta_{B_\ell(k_\ell)}$ in order, i.e., the rows 
	of $\tilde \theta^\ell$ that are \emph{potentially} 
	distinct. We claim that
	\begin{equation}
	\label{eq:Thetaell}
	\Theta^\ell \in \mathcal A\big (K_\ell,\: \frac{m}{a^2}, \: \frac{2 t a + \delta}{2^{\ell - 1}}, \:t\sqrt{\frac{m}{n}} + \delta,\:t\big) \eqqcolon \mathcal A_{a, \ell} \,(= \mathcal A_{\ell, P})\,.
	\end{equation}
	The constraints on the number of rows and columns of $\Theta^\ell$ as well as $\|\Theta^\ell\|$ are clear. For the remaining constraints first observe that $\tilde \theta^\ell \in \mathcal A(\nr(R^\ell), \frac{m}{a^2}, 2 t a + \:\delta, \:t\sqrt{\frac{m}{n}} + \delta,t)$ (the only non-obvious part is the bound on $\TVc(\tilde \theta^\ell)$ which follows from the 
	triangle inequality). From the definition of $\Theta^\ell$ it is immediate that 
	$$\TVc(\Theta^\ell) = 
	\TVc(\tilde \theta^\ell)\,\, \mbox{and}\,\,\TVr(\Theta^\ell) \leq
	\frac{\TVr(\tilde \theta^\ell)}{\min_{i \in [K_\ell]}\nr(R_{B_\ell(i)})}\,.$$
	Therefore the bounds on $\TVc(\Theta^\ell)$ and $\TVr(\Theta^\ell)$ follow from the similar bounds for $\tilde \theta^\ell$ and the fact that $\nr(R_{B_\ell(i)}) \geq 2^{\ell - 1}$ for each $i \in [K_\ell]$.

	Further notice that since $\nr(R_{B_\ell(i)}) < 2^{\ell}$ for each $i \in [K_\ell]$, we have $\|\tilde\theta^\ell - \hat\theta^\ell\| 
	\leq 2^{\ell / 2}\|\Theta^\ell - \hat\Theta^\ell\|$ where $\hat \theta^\ell$ comprises repetitions of the rows of $\hat \Theta^\ell$ in the same way as $\tilde{\theta}^\ell$ comprises repetitions of the rows of $\tilde{\Theta}^\ell.$ Therefore any $2^{-\ell / 2}\sqrt{m}\ep$ subspace cover $\mathcal S_{\epsilon}^\ell$ for $\mathcal A_{a, \ell}$ induces a $\sqrt{m}\ep$ subspace cover $\mathcal S_{\tau, P}^\ell$ for $\mathcal A^*_{\ell, P}$. Our next claim is about a \emph{uniform} upper bound on $\max_{S \in \mathcal S_{\epsilon}^\ell} \dim(S)$ and $|\mathcal S_{\epsilon}^\ell|$ for some particular choice of $\mathcal S_{\epsilon}^\ell$ and hence that of $\max_{S \in \mathcal S_{\tau, P}^\ell} \dim(S)$ and $|\mathcal S_{\tau, P}^\ell|$ as well.
	\begin{claim}
		\label{claim:optimize}
		There is a choice of $\mathcal S_{\epsilon}^\ell$ for any $\ell \in \N_{>0}$ and $\ep \in [1/m^2, 1 /\sqrt{m}]$ such that for some universal constant $C > 0$,
		\begin{align*}
		\max(\log |\mathcal S_{\epsilon}^\ell|,\, \max_{S \in \mathcal S_{\epsilon}^\ell} \dim(S)) \leq C \log(\e m)^2\mathcal L_m\big( 1 + \frac{1}{\sqrt{m}\epsilon^2}\big(t\sqrt{\frac{m}{n}} + t + \delta\big)^{2\downarrow} \big)
		\end{align*}
		where we recall from the statement of Lemma~\ref{lem:main} that $\mathcal L(x) = x \log (\e \log(\e m)^2 x)$ and $x^{2\downarrow} = x + x^2$.
	\end{claim}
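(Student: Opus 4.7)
My plan is to apply Lemma~\ref{lem:prep} directly to the family $\mathcal A_{a,\ell}$ with a carefully chosen auxiliary parameter $k$, after first expressing the key quantities in a form that makes their dependence on $\ell$ transparent. The critical preparatory observation is that, because every block $\tilde\theta_{B_\ell(i)}$ contains at least $2^{\ell-1}$ rows and the blocks together account for $m$ rows of $\tilde\theta$, one has $K_\ell\cdot 2^{\ell-1}\leq m$, i.e., $K_\ell\leq m\,2^{-(\ell-1)}$.

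Next, I would substitute the parameters of $\mathcal A_{a,\ell} = \mathcal A\bigl(K_\ell,\, m/a^2,\, (2ta+\delta)/2^{\ell-1},\, t\sqrt{m/n}+\delta,\, t\bigr)$ into Lemma~\ref{lem:prep} with resolution $\tau_\ell := 2^{-\ell/2}\sqrt m\,\epsilon$, and clean up using $a\geq c$. The quantity $J_k$ then reduces to $\leq C\log(em)\bigl(k + (t+\delta)\sqrt k\cdot 2^{-\ell/2}/\epsilon\bigr)$, while the factor $v\sqrt{K_\ell}/\tau_\ell$ reduces to at most $\sqrt 2\,(t\sqrt{m/n}+\delta)/\epsilon$ because $\sqrt{K_\ell/m}\leq 2^{-(\ell-1)/2}$ precisely cancels the $2^{\ell/2}$ coming from $1/\tau_\ell$. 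Both simplified quantities are either $\ell$-free or decreasing in $\ell$, which is what makes a uniform-in-$\ell$ bound possible in the first place.

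Writing $\alpha_\ell := (t+\delta)\,2^{-\ell/2}/\epsilon$ and $\beta := (t\sqrt{m/n}+\delta)/\epsilon$, Lemma~\ref{lem:prep} then gives a bound (up to $\log$ factors) of the form $J_k + \sqrt{J_k}\,\beta/\sqrt k$ with $J_k\sim \log(em)(k+\alpha_\ell\sqrt k)$. I would split into the two regimes $k\leq \alpha_\ell^2$ (where $J_k\sim\log(em)\alpha_\ell\sqrt k$) and $k\geq\alpha_\ell^2$ (where $J_k\sim\log(em)\,k$), balance the two summands in each regime, and take the minimum; when the optimal $k^{\ast}$ exceeds $K_\ell$ I would fall back on the $k=M_\ell$ branch of Lemma~\ref{lem:prep}, which introduces only an extra logarithmic factor absorbed into the $\mathcal L_m$ wrapper. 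After the optimization the balanced expressions collapse, via the identity $\alpha_\ell\,\beta = (t+\delta)(t\sqrt{m/n}+\delta)\,2^{-\ell/2}/\epsilon^2$ and the row-count bound, to produce the claimed scaling $s^{2\downarrow}/(\sqrt m\,\epsilon^2)$ with $s := t\sqrt{m/n}+t+\delta$.

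The main obstacle will be extracting the $1/\sqrt m$ factor in the target. A naive invocation of Lemma~\ref{lem:prep} yields terms scaling like $(t+\delta)^2/\epsilon^2$ or $m(t+\delta)^2/(2^\ell\epsilon^2)$, neither of which has the right dependence on $m$. Getting the $1/\sqrt m$ factor requires the simultaneous exploitation of (i)~the row-count bound $K_\ell\leq m\,2^{-(\ell-1)}$, (ii)~the identity $\tau_\ell^2 = 2^{-\ell}\,m\,\epsilon^2$ relating the nominal resolution to $\ell$, and (iii)~the geometric-mean structure of the cross term $\sqrt{J_k}\,\beta/\sqrt k$ at the optimal $k^\ast$, which is what gives rise to the square-root exponent on $m$ rather than the exponent $0$ or $1$ that the individual terms would suggest. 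Keeping the resulting bound strictly $\ell$-independent, handling the edge case where $k^\ast$ exceeds $K_\ell$, and fitting the final expression inside the $\mathcal L_m$ wrapper will require some careful bookkeeping but should be routine once the optimization across the two regimes is pinned down.
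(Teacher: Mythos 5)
Your high-level frame --- feed $\mathcal A_{a,\ell}$ into Lemma~\ref{lem:prep}, pick $k$ cleverly, split into two regimes, and fall back on the $k=$\,(number of rows) branch when needed --- is the same as the paper's, and your simplifications of $J_k$ and of the prefactor $v\sqrt{K_\ell}/\tau_\ell$ (the latter via $K_\ell\le 2m2^{-\ell}$) are correct. But the argument as proposed has a genuine gap: the row-count bound $K_\ell\le m2^{-(\ell-1)}$ together with a balancing optimization over $k$ does \emph{not} produce the claimed $\tfrac{1}{\sqrt m\,\ep^2}$ scaling. The paper's proof rests on a different, essential input that you never invoke: the greedy-partition cardinality bound
\begin{equation*}
K_\ell \;\le\; K \;\le\; C\log(\e m)\Bigl(1+\bigl(t\sqrt{\tfrac{m}{n}}+\delta\bigr)\ep^{-1}\Bigr),
\end{equation*}
which comes from Lemma~\ref{lem:division2} applied to the $(\TVc,\ep)$ scheme and the bound $\TVc(\theta)\le t\sqrt{m/n}+\delta$; combined with $\ep\le 1/\sqrt m$ (so $1/\ep\le 1/(\sqrt m\ep^2)$), this is precisely what converts the ``$k$'' term of $J_k$ into something of the target order. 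The correct choice of $k$ is then $\min\bigl(K_\ell,\lfloor 2^{\ell}/(m\ep^2)\rfloor\bigr)$, i.e.\ the inverse square of the covering radius $\tau_\ell$, not the point balancing $J_k$ against the cross term.

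To see that your optimization cannot close the gap, take $\ell=1$, $\ep=1/\sqrt m$, $t=\delta=1$, $n\asymp m$, so the claimed bound is $O(\sqrt m)$ up to logarithms. With your notation $\alpha_1\asymp (t+\delta)/\ep\asymp\sqrt m$ and $\beta\asymp\sqrt m$. In the regime $k\ge\alpha_\ell^2$ the bound is at least $Lk\ge L\alpha_1^2\asymp m\log m$; in the regime $k\le\alpha_\ell^2$ the balanced value is $(L\alpha_1)^{2/3}\beta^{2/3}\asymp m^{2/3}(\log m)^{2/3}$, and at $k=1$ the cross term is $\sqrt{L\alpha_1}\,\beta\asymp m^{3/4}\sqrt{\log m}$. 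All of these exceed $\sqrt m$ by polynomial factors, so no choice of $k$ works with only the inputs you list. Relatedly, your ``identity'' $\alpha_\ell\beta=(t+\delta)(t\sqrt{m/n}+\delta)2^{-\ell/2}/\ep^2$ misses the target by a factor of order $\sqrt m/2^{\ell/2}$ at small $\ell$. The fix is to import display~\eqref{eq:K2}: in the case $K_\ell\le 2^{\ell}/(m\ep^2)$ set $k=K_\ell$ (no cross term) and bound $K_\ell$ by the greedy-scheme estimate; in the complementary case set $k=\lfloor 2^{\ell}/(m\ep^2)\rfloor$ and use $\sqrt{K_\ell}/(2^{-\ell/2}\sqrt m\ep)\le K_\ell$ followed again by the greedy-scheme estimate.
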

	Claim~\ref{claim:optimize} follows directly from Lemma~\ref{lem:prep} when we choose $k$ in an \emph{appropriate} manner. The complete proof is given after the current proof.

	\noindent{\bf Concluding the proof.} In the remainder of the proof we will use $C$ to denote any positive, universal constant whose exact value may change from one line to the next. Using Claim~\ref{claim:optimize} let us first bound
	\begin{align*}
	\max_{P \in \mathcal P} \sum_{\ell \leq \log_2{2 m}}\max(\log |\mathcal S_{\tau, P}^\ell|,\, \max_{S \in \mathcal S_{\tau, P}^\ell} &\dim(S)) \leq C (\log(\e m))^3\mathcal L_m\big( 1 + \frac{1}{\sqrt{m}\epsilon^2}\big(t\sqrt{\frac{m}{n}} + t + \delta\big)^{2\downarrow} \big)\nonumber\\
	&\leq C(\log(\e m))^3\mathcal L_m\big( 1 + \frac{\sqrt{m}\log(\e m)}{\tau^2}\big(t\sqrt{\frac{m}{n}} + t + \delta\big)^{2\downarrow} \big)\nonumber \\
	&= C(\log(\e m))^3\mathcal L_m\big ( 1 + \frac{\sqrt{m}C_{m, n, \delta, t}}{\tau^2}\big)
	\end{align*}
	where we used the fact that $\tfrac{1}{\sqrt{m}\epsilon^2} = \tfrac{\sqrt{m}\log_2(4m)}{\tau^2}$ (recall the choice of $\ep$ after \eqref{eq:tau_ep} and also the definition of 
	$C_{m, n, \delta, t}$ from the statement of Lemma~\ref{lem:main}). On the other hand, since $\ep \leq 1 / \sqrt {m}$, we can bound $\log |\mathcal P|$ in view of \eqref{eq:mathcalP1} as
	\begin{align*}
	\log_2 (4m) &(1 + \,(t\sqrt{\tfrac{m}{n}} + \delta)\frac{1}{\ep})\log m \leq \log_2 (4m) (1 + \,(t\sqrt{\tfrac{m}{n}} + \delta)\frac{1}{\sqrt{m}\ep^2})\log m \nonumber\\
	&\leq C \log (\e m)^2  \big ( 1 + \frac{\sqrt{m}C_{m, n, \delta, t}}{\tau^2}\big )\,.
	\end{align*}
	Since $\mathcal L(x) \geq x$ for all $x \geq 1$, we deduce by combining the previous two displays and subsequently plugging them into \eqref{eq:covering_number_main0}--\eqref{eq:covering_number_main}:
	\begin{equation*}
	\max(\log |\mathcal S_{\tau}|,\, \max_{S \in \mathcal S_{\tau}} \dim(S)) \leq C(\log(\e m))^3\mathcal L_m\big ( 1 + \frac{\sqrt{m}C_{m, n, \delta, t}}{\tau^2}\big)\,. \qedhere
	\end{equation*}
\end{proof}

\smallskip

\begin{proof}[Proof of Claim~\ref{claim:optimize}]
	The ``main'' contribution in the bounds on $\log |\mathcal S_{\epsilon}^\ell|$ and $\max_{S \in \mathcal S_{\epsilon}^\ell} \dim(S)$ given by Lemma ~\ref{lem:prep} comes from
	\begin{equation*}
	J_{k, \ell}^* \coloneqq \big(J_{k, \ell} + \sqrt{J_{k, \ell}}\frac{(t\sqrt{\frac{m}{n}} + \delta)\sqrt{K_\ell}}{2^{-\ell/2}\sqrt{m}\ep\sqrt{k}}\I\{k < K_\ell\}\big)
	\end{equation*}
	where
	\begin{equation}
	\label{eq:Jkl}
	J_{k, \ell} = C \log (\e m / a^2)\big (k + \frac{2^{-\ell}(2ta + \delta)\sqrt{mk}}{a2^{-\ell/2}\sqrt{m}\ep}\big) \overset{{{a \geq c}}}{\leq} C \log (\e m)\big(k + \frac{2^{-\ell}(2t + \delta)\sqrt{mk}}{2^{-\ell/2}\sqrt{m}\ep}\big)\,
	\end{equation}
	(recall the statement of Lemma~\ref{lem:prep} and \eqref{eq:Thetaell}). Therefore, as already mentioned in the proof of Lemma~\ref{lem:main}, we will apply Lemma~\ref{lem:prep} for some $k \in [K_\ell]$ so 
	that $J_{k, \ell}^*$ has a small value. In the rest of the proof we will use $C$ to denote an unspecified but universal positive constant whose value may change from one 
	instant to the next. Using the simple fact $\sqrt{x + y} \leq \sqrt{x} + \sqrt{y}$, we can bound $J_{k, \ell}^*$ as follows:
	\begin{equation}
	\label{eq:choose_k2}
	J_{k, \ell}^* \leq J_{k, \ell} + \I\{k < K_\ell\}\,C_{\ep, k, \ell}\,,
	\end{equation}
	where
	\begin{equation}
	\label{eq:C_epkl}
	C_{\ep, k, \ell} \coloneqq C\sqrt{\log (\e m)}(t\sqrt{\frac{m}{n}} + \delta)\,\sqrt{K_\ell}\,\,\big(\frac{1}{2^{-\ell/2}\sqrt{m}\ep} + \frac{2^{-\ell/2}\sqrt{(2t + \delta)}m^{1/4}}{(2^{-\ell/2}\sqrt{m}\ep)^{3/2}k^{1/4}}\big)\,.
	\end{equation}
	Now let us consider two cases separately based on whether $\sqrt{K_\ell} 2^{-\ell/ 2}\sqrt{m}\ep$ is smaller or larger than 1. Recall that $2^{-\ell/ 2}\sqrt{m}\ep$ is the covering radius in question, and the condition above is equivalent to $K_\ell$ being smaller or larger than the inverse of the covering radius squared. 
	
	\smallskip

	{\em Case~1: $K_\ell \leq \frac{2^{\ell}}{m \epsilon^2}$.} In this case we choose $k = K_\ell$ so that Lemma~\ref{lem:prep} and \eqref{eq:Jkl} together give us
	\begin{equation}
	\label{eq:covering1}
	\max(\log |\mathcal S_{\epsilon}^\ell|,\, \max_{S \in \mathcal S_{\epsilon}^\ell} \dim(S)) \leq CJ_{{k}, \ell}\log (\e K_\ell J_{k, \ell})
	\end{equation}
	where
	\begin{equation}
	\label{eq:bnd_Jkl}
	J_{{k}, \ell} \leq C \log (\e m)\big(K_\ell + \frac{2^{-\ell}(2t + \delta)\sqrt{mK_\ell}}{2^{-\ell/2}\sqrt{m}\ep}\big)\,.
	\end{equation} 
	Now using 
	\begin{equation}
	\label{eq:K2}
	K_\ell \leq K \leq C\log (\e m) \big( 1 + \,{(t\sqrt{\frac{m}{n}} + \delta)\ep^{-1}}\big)\,
	\end{equation}
	for the first term inside the parenthesis in \eqref{eq:bnd_Jkl} (recall the definition of $K_\ell$ and $K$ from the proof of Lemma~\ref{lem:main}) and using
	$K_\ell \leq \frac{2^{\ell}}{m \epsilon^2}$ for the second, we get
	\begin{equation*}
	J_{k, \ell} \leq C(\log(\e m))^2 + C(\log(\e m))^2\big(t\sqrt{\frac{m}{n}} + t + \delta \big) \big( \frac{1}{\ep} + \frac{1}{\sqrt{m}\epsilon^2} \big)\,.
	\end{equation*}
	Further noticing that $\ep \leq 1 / \sqrt{m}$, so that $\tfrac{1}{\ep} \leq \tfrac{1}{\sqrt{m}\epsilon^2}$, we obtain
	\begin{equation}
	\label{eq:Jklcase1}
	J_{k, \ell} \leq C(\log(\e m))^2 \Big(1 + \frac{1}{\sqrt{m}\epsilon^2}\big(t\sqrt{\frac{m}{n}} + t + \delta\big)^{2\downarrow} \Big)\,
	\end{equation}
	(recall that $x^{2\downarrow} = x + x^2$). 
	On the other hand we have $K_\ell = k \le J_{k, \ell}$ for $C > 1$.
	Plugging these bounds into the right hand side of \eqref{eq:covering1} and rewriting the expression in terms of $\mathcal L_m(x) = x \log (\e \log(\e m)^2 x)$ we obtain
	\begin{equation}
	\label{eq:covering2}
	\max(\log |\mathcal S_{\epsilon}^\ell|,\, \max_{S \in \mathcal S_{\epsilon}^\ell} \dim(S)) \leq  C (\log(\e m))^2\mathcal L_m\big( 1 + \frac{1}{\sqrt{m}\epsilon^2}\big(t\sqrt{\frac{m}{n}} + t + \delta\big)^{2\downarrow} \big)\,.
	\end{equation}
	where we used the fact that $\log (C \e \log(\e m)^2 x) \leq C\log (\e \log(\e m)^2 x)$ for all $x \geq 1$ and large enough $C$.
	
	\smallskip
	
	\noindent\emph{Case~2: $K_\ell \geq \frac{2^{\ell}}{m \epsilon^2}$. } Notice that in this case we can choose $k = \floor{\frac{2^{\ell}}{m \epsilon^2}}$ and Lemma~\ref{lem:prep} gives us
	\begin{equation}
	\label{eq:covering3}
	\max(\log |\mathcal S_{\epsilon}^\ell|,\, \max_{S \in \mathcal S_{\epsilon}^\ell} \dim(S)) \leq CJ_{{k}, \ell}^*\log (\e K_\ell J_{k, \ell}^*)\,.
	\end{equation}
	We will show below that the right hand side of \eqref{eq:Jklcase1} also serves as an upper bound for $J_{k, \ell}^*$ and $K_\ell$, and consequently the upper bound in \eqref{eq:covering2} holds in this case as well, thus 
	proving the claim. To this end we will use the bounds 
	\eqref{eq:choose_k2} and \eqref{eq:C_epkl}. First observe that the bound on $J_{k, \ell}$ is same as in the previous 
	case since the only bounds we used there were $k \leq K_\ell$ and $k \leq \frac{2^{\ell}}{m \epsilon^2}$, both of which 
	are valid in this case. On the other hand, $C_{\epsilon, k, \ell}$ can be bounded by
	\begin{equation}
	\label{eq:K3}
	C\sqrt{\log (\e m)}(t\sqrt{\frac{m}{n}} + \delta)\,\big(\frac{2^{\ell/2}\sqrt{K_\ell}}{\sqrt{m}\ep} + \frac{\sqrt{(2t + \delta)K_\ell}}{m^{1/4}\ep }\big)\,.
	\end{equation}
	Since $K_\ell \geq \frac{2^{\ell}}{m \epsilon^2}$ and $\ep \leq 1 / \sqrt{m}$, we have
	\begin{align*}
	\frac{\sqrt{K_\ell}}{2^{-\ell/2}\sqrt{m}\ep} \leq K_\ell  &\overset{\eqref{eq:K2}}{\leq} C\log (\e m) \big( 1 + \,{(t\sqrt{\frac{m}{n}} + \delta)\frac{\sqrt{m}\ep}{\sqrt{m}\epsilon^2}}\big)\\
	&\leq C\log(\e m) + C\log (\e m)  \,{(t\sqrt{\frac{m}{n}} + \delta)}\frac{1}{\sqrt{m}\epsilon^2}\,
	\end{align*}
	(cf. the right hand side of \eqref{eq:Jklcase1}). Similarly we can bound
	\begin{align*}
	\frac{\sqrt{(2t + \delta){K_\ell}}}{m^{1/4}\ep} &\leq C\sqrt{\log(\e m)}\sqrt{t + \delta}\,\big(\frac{1}{m^{1/4}\ep} + \frac{\sqrt{t\sqrt{\frac{m}{n}} + \delta}}{m^{1/4}\epsilon^{3/2}}\big)\\
	&\leq C\sqrt{\log(\e m)}\sqrt{t + \delta}\,\Big(1 + \sqrt{t\sqrt{\frac{m}{n}} + \delta}\Big)\frac{1}{m^{1/4}\ep^{3/2}}\\
	&= C\sqrt{\log(\e m)}\sqrt{t + \delta}\,\Big(1 + \sqrt{t\sqrt{\frac{m}{n}} + \delta}\Big)\frac{\sqrt{\sqrt{m}\ep}}{\sqrt{m}\epsilon^2}\\ &\leq C\sqrt{\log(\e m)}\sqrt{t + \delta}\,\Big(1 + \sqrt{t\sqrt{\frac{m}{n}} + \delta}\Big)\frac{1}{\sqrt{m}\epsilon^2}\,.
	\end{align*}
	Plugging these bounds into the \eqref{eq:K3} we get 
	\begin{equation*}
	C_{\epsilon, k, \ell} \leq C(\log(\e m))^2 \Big(1 + \frac{1}{\sqrt{m}\epsilon^2}\big(t\sqrt{\frac{m}{n}} + t + \delta\big)^{2\downarrow} \Big)\,.
	\end{equation*}
	where used the simple fact that $x^{3/2} \leq 
	x^{2\downarrow}$. Combined with \eqref{eq:Jklcase1} and the discussion preceding the display \eqref{eq:K3}, this yields us a similar upper bound for $J_{k, \ell}^*$. 
\end{proof}
We are only left with the proof of Lemma~\ref{lem:prep}.
\begin{proof}[Proof of Lemma~\ref{lem:prep}] 
	The proof is split into two parts. In the first part we try to construct, for any 
	given $\theta \in \mathcal A(m, n , u, v, t)$, another matrix $\hat \theta$ satisfying 
	$\|\theta - \hat \theta\| \leq \tau$ such that $\hat \theta$ is piecewise constant on 
	rectangles with as few blocks as possible. These blocks define a partition $P$ of $L_{m, n}$ and let $\mathcal P$ denote the set of all such partitions. It is then clear that $S_\tau \coloneqq \{S_P: P \in \mathcal P\}$ forms a $\tau$ subspace cover of $\mathcal A(m, n , u, v, t)$ (see the proof of Theorem~\ref{Thm:1} in Section~\ref{sec:thm1} for the notation and similar notions). In the second and the final part we bound $\max_{P \in \mathcal P}|P|$ and $|\mathcal P|$ which, in view of the definition above, yield the desired upper bounds on $\max_{S \in \mathcal S_\tau}\dim(S)$ and $|\mathcal S_\tau|$.
	
	\smallskip
	
	{\bf Approximating $\theta$ by a piecewise constant matrix. } This part consists of three steps. In the ``zeroth'' step, we divide $\theta$ equally into $k$ submatrices by 
	\emph{horizontal divisions}. We do not choose, a priori, any specific value of $k$ which is the reason why our final 
	bound depends on $k$. Then in step~1, each of these submatrices is divided into submatrices by 
	\emph{vertical divisions} which are again subdivided in 
	step~2 by horizontal divisions. The rectangles corresponding to these submatrices will be the final level 
	sets of $\hat \theta$. We now elaborate the steps.
	
	\smallskip
	
	{\em Step~0: Horizontal Divisions. } Fix a positive integer 
	$1 \leq k \leq m$ 
	and divide $L_{m, n}$ into $k$ submatrices as follows:
	$$L_{m, n} = \begin{bmatrix}
	R_1 \\
	R_2\\
	\vdots \\
	R_k
	\end{bmatrix}\,$$
	where each $R_i$ has either $\ceil{m / k}$ or $\floor{m/k}$ many rows. We want to stress that we use the same partitioning for every $\theta$ in this step. 
	
	\smallskip
	
	{\em Step~1: Vertical Divisions. } Next we want to subdivide each $R_i$ (where $i \in [k]$) by making $j_i$ many vertical divisions:
	$$R_i = [R_{i, 1} \rvert R_{i, 2} \rvert \ldots \rvert R_{i, j_i}]$$
	such that $\TVr(\theta_{|R_{i, j}}) \leq \tau_k$ for all $j \in 
	[j_i]$ and some $\tau_k > 0$ to be chosen shortly. We can do this by the $(\TVr, \tau_k)$ scheme applied to the columns of $\theta_i$ so that Lemma~\ref{lem:division2} gives us the bounds
	\begin{equation}\label{eq:up0}
	j_i \leq \log_2(4n)\big(1 + \frac{\TVr(\theta_i)}{\tau_k}\big)\,.
	\end{equation}
	Replacing each element in every row of $\theta_{i, j} \coloneqq \theta_{|R_{i, j}}$ with the corresponding row mean, we then obtain a new matrix
	$$\tilde \theta_{i} = [\tilde \theta_{i, 1} \rvert \tilde \theta_{i, 2} \rvert \ldots \rvert \tilde \theta_{i, j_i}]\,.$$ By construction, each $\tilde \theta_{i, j}$ has identical columns. Finally, let us define
	$$\tilde \theta = \begin{bmatrix}
	\tilde \theta_1 \\
	\tilde \theta_2\\
	\vdots \\
	\tilde \theta_k
	\end{bmatrix}\,$$
	From the Cauchy-Schwarz inequality, it is clear that 
	$\|\tilde \theta\| \leq \| \theta \|$. One important observation we need make at this point is that while this 
	averaging procedure might increase the value of $\TVc(\tilde \theta)$, it does not increase the value of $\TVc(\tilde\theta_{i, j})$ for any $i$ and $j$. Indeed by a computation exactly similar to that performed in 
	\eqref{eq:preserve_tvr} we get 
	\begin{align}
	\label{eq:preserve_tvc}
	\TVc(\tilde \theta_{i, j}) \leq \TVc(\theta_{i, j})\,.
	\end{align}
	
	Let us now try to bound $\|\theta  - \tilde \theta\|$. To this end notice that
	\begin{align}\label{eq:error1}
	\|\theta - \tilde \theta\|_2^2 =& \sum_{i \in [k], j \in [j_i]}\sum_{i' \in [\nr(R_{i})]} \|\theta_{i, j}[i',\,] - \tilde \theta_{i, j}[i',\,]\|_2^2\nonumber\\ \leq& \sum_{i \in [k], j \in [j_i]}\nc(R_{i, j})\sum_{i' \in [\nr(R_{i})]} \TV(\theta_{i, j}[i',\,])^2\,
	\end{align} 
	where in the final step we used Lemma~\ref{prop:1dtvapprox}. Since $\TVr(\theta_{i, j}) \leq \tau_k$, we can then deduce 
	\begin{align}\label{eq:up3}
	\|\theta - \tilde \theta\|_2^2 \leq& \sum_{i \in [k], j \in [j_i]}\nc(R_{i, j})\big(\sum_{i' \in [\nr(\theta_{i})]} \TV(\theta_{i, j}[i',\,])\big)^2 \nonumber \\ 
	\leq& \sum_{i \in [k], j \in [j_i]}\nc(R_{i, j})\tau_k^2 = nk\tau_k^2\,.
	\end{align}
	Setting $\tau_k  = \tau / 2\sqrt{nk}$, we get $\|\theta - \tilde \theta\|_2 \leq \tau /2$.
	
	%
	
	\smallskip 
	
	{\em Step~2: Horizontal Divisions. }In this step, we are going to make horizontal divisions within each $R_{i, j}$ obtained from step~1 so that the total variation of columns of $\tilde \theta_{i, j}$ restricted to each subdivision is smaller 
	than some fixed, small number. To this end fix $\tau_{k}' > 0$ whose exact value will be chosen later. Now use the $(\TVc, \tau_k')$ scheme applied to the rows of $R_{i, j}$ to obtain the following subdivision:
	$$
	R_{i, j} = \begin{bmatrix}
	R_{i, 1; j} \\
	R_{i, 2; j} \\
	\vdots \\
	R_{i, \ell_{i, j}; j} \\
	\end{bmatrix}\,$$
	where, with $\tilde \theta_{i, \ell; j} \coloneqq \tilde \theta_{|R_{i, \ell; j}}$, $\TVc[\tilde \theta_{i, \ell; j}] \leq \tau_k'$ for all $\ell \in [\ell_{i, j}]$. From Lemma~\ref{lem:division2} we can deduce
	\begin{equation}\label{eq:up2}
	\ell_{i, j} \leq \log_2 (4m)\big(1 + \frac{\TVc(\tilde \theta_{i, j})}{\tau_k'}\big)\,.
	\end{equation}
	Like in the definition of $\tilde \theta_{i, j}$, we now replace every element in each column of $\tilde \theta_{i,\ell; j}$ (recall at this point that $\tilde \theta_{i, j}$ and hence $\tilde \theta_{i, \ell; j}$ has identical columns) with the corresponding column mean and obtain a new matrix
	$$
	\hat \theta_{i, j} = \begin{bmatrix}
	\hat \theta_{i, 1; j} \\
	\hat \theta_{i, 2; j} \\
	\vdots \\
	\hat \theta_{i, \ell_{i, j}; j} \\
	\end{bmatrix}\,$$
	By construction, $\hat \theta_{i, \ell; j}$ is a constant matrix. Let $\hat \theta \in \R^{m \times n}$ be such that $\hat \theta_{|R_{i, \ell; j}} = \hat \theta_{i, \ell; j}$. By the Cauchy-Schwarz inequality we have $\|\hat \theta\| \leq \|\tilde \theta\| \leq \|\theta\|$.

	We now want to bound the distance between $\tilde \theta$ 
	and $\hat \theta$. Notice that, since the columns of $\tilde \theta_{i,\ell; j}$ are identical, we get from Lemma~\ref{prop:1dtvapprox}
	$$\|\tilde \theta_{i, \ell; j}[\,, j'] - \hat \theta_{i, \ell; j}[\,, j']\|_2^2 \leq \nr(\tilde \theta_{i, \ell; j}) (\tau_k'  / \nc(\tilde \theta_{i, j}))^2\,$$
	for every $j' \in \nc(\tilde \theta_{i, j}) = \nc(\tilde \theta_{i, \ell; j})$. Summing over $i,j, \ell$ and $j'$, we then deduce
	\begin{align*}
	\|\tilde \theta - \hat \theta\|_2^2 &\leq \sum_{i \in [k], j \in [j_i], \ell \in [\ell_{i, j}]}\frac{\nr(R_{i, \ell;j})}{\nc(R_{i, j})}\tau_k'^2 = \tau_k'^2\sum_{i \in [k], j \in [j_i]}\frac{\nr(R_{i, j})}{\nc(R_{i, j})} \\
	&\leq \frac{2\tau_k'^2m}{k}\sum_{i \in [k], j \in [j_i]}\frac{1}{\nc(R_{i, j})}\,.
	\end{align*}
	Let us choose 
	\begin{equation}\label{eq:choice_tauk'}
	{\tau_k'}^2 = \frac{\tau^2k}{8m\sum\limits_{i \in [k], j \in [j_i]}\frac{1}{\nc(R_{i, j})}}\,,
	\end{equation}
	so that $\|\tilde \theta - \hat \theta\|_2 \leq \tau / 2$ and hence 
	$$\|\theta - \hat \theta\|_2 \leq \|\theta - \tilde \theta\|_2 + \|\tilde \theta - \hat \theta\|_2 \leq \tau / 2 + \tau / 2 = \tau\,.$$
	
	\medskip
	
	{\bf Counting the number of possible partitions for any $\theta$. } Fix any vertical division of $\theta$ obtained in step~1. Now summing \eqref{eq:up2} over all $i$ and $j$ we get
	\begin{equation}
	\label{eq:ell_ijbnd1}
	\sum_{i\in [k], j \in [j_i]}\ell_{i, j} \leq \log_2 (4m)\,(\sum_{i\in [k]}j_i + v / \tau_k')
	\end{equation}
	where we used the following fact
	$$\sum_{i \in [k], j \in [j_i]} \TVc(\tilde \theta_{i, j}) \overset{\eqref{eq:preserve_tvc}}\leq \sum_{i \in [k], j \in [j_i]} \TVc(\theta_{i, j}) \leq \TVc (\theta) \leq v\,.$$
	On the other hand \eqref{eq:choice_tauk'} allows us to deduce a naive lower bound on $\tau_k'$ as follows:
	$$\tau_k' \geq \frac{\tau \sqrt{k}}{4\sqrt{2m}\sqrt{\sum_{i \in [k]}j_i}}\,.$$
	Plugging this into \eqref{eq:ell_ijbnd1} we get for a universal constant $C > 0$,
	\begin{equation}
	\label{eq:ell_ijbnd2}
	n_{piece}(\hat{\theta}) := \sum_{i\in [k], j \in [j_i]}\ell_{i, j} \leq C\log(\e m)\big( J + \sqrt{J} \frac{v\sqrt{m}}{\tau\sqrt{k}} \big)
	\end{equation}
	where $n_{piece}(\hat{\theta})$ is 
	the total number of rectangular level sets of $\hat \theta$ and $J \coloneqq \sum_{i\in [k]}j_i$. From now onwards we will implicitly assume that $C$ is a positive, universal constant whose exact value may vary from one line to the next.

	Therefore the number of tuples $(\ell_{1, 1}, \ell_{1, 2}, \ldots, \ell_{k, j_k})$ satisfying \eqref{eq:ell_ijbnd2} is at most
	\begin{equation}
	\label{eq:ell_ijcard}
	(C\log (\e m))^{J}\big(J + \sqrt{J} \frac{v\sqrt{m}}{\tau\sqrt{k}}\big)^{J}\,.
	\end{equation}
	Similarly, in order to bound $J$ we sum \eqref{eq:up0} over all $i$ to obtain
	\begin{align}
	\label{eq:Jkbnd}
	J &= \sum_{i \in [k]} j_i \leq \log_2(4n)\big(k + \frac{1}{\tau_k}\sum_{i \in [k]} \TVr(\theta_i)\big) = \log_2(4n)\big(k + \frac{1}{\tau_k} \TVr(\theta)\big)\nonumber\\
	&\leq C \log (\e n) \big(k + \frac{u}{\tau_k}\big) \leq C \log (\e n)\big(k + \frac{u\sqrt{nk}}{\tau}\big) \eqqcolon J_k\,,
	\end{align}
	where in the final step we used $\tau_k = \tau / 2 \sqrt{nk}$ (see the end of step~1 in the previous part).
	
	It remains to count the number of possible vertical divisions in step~1. To this end let us fix a tuple $(j_1, j_2, \ldots, j_k)$ satisfying $\sum_{i \in [k]}j_i \leq 
	J_k$. The number of possible vertical divisions in this case is bounded by $\prod_{i \in [k]}n^{j_i} = 
	n^{J_k}$. On the other hand, in view of \eqref{eq:up0} and \eqref{eq:Jkbnd}, the number of tuples $(j_1, j_2, \ldots, j_k)$ is bounded by the number of nonnegative integral solutions to the inequality $\sum_{i \in [k]} j_i \leq J_k$ which in turn is bounded by 
	$(J_k)^k$. Putting all of these together with \eqref{eq:ell_ijcard} and \eqref{eq:Jkbnd}, we can now deduce the following upper bound on the total number of possible partitions for any $\theta \in \mathcal A(m, n, u, 
	v, t)$:
	\begin{equation}
	\label{eq:partition_card}
	(J_k)^kn^{J_k}(C\log (\e m))^{J_k}\big(J_k + \sqrt{J_k} \frac{v\sqrt{m}}{\tau\sqrt{k}}\big)^{J_k}\,.
	\end{equation}
	From this and \eqref{eq:ell_ijbnd2} we can derive the bound for any $1 \le k < m$. For the second bound, that is when $k = m$, recall that the second summand in the right hand side of \eqref{eq:ell_ijbnd1} comes from the horizontal division conducted in step~2. Since this step becomes void for $k = m$, the required bound follows in exactly similar fashion with $J_k$ replacing $J_k + \sqrt{J_k}\tfrac{v\sqrt{m}}{\tau\sqrt{k}}$.
	%
	%
\end{proof}


\section{Proof of Theorem~\ref{Thm:notuning}}
To prove Theorem~\ref{Thm:notuning} we apply the general machinery developed in~\cite{chatterjee2015high} with suitable modifications. 
Let us define $w = y - \overline{y}$ to be the centered data matrix, $w^* = \theta^* - \overline{\theta^*}$ to be the centered ground truth matrix and let
\begin{equation}\label{eq:def}
\hat{w} \coloneqq \argmin_{v:\, \overline{v} = 0,\, \|w - v\|^2 \leq (n^2 - 1) \hat{\sigma}^2} \TV(v)\,.
\end{equation}
Also, for any $V \geq 0,$ let $\hat{w}_{V}$ denote the Euclidean projection of $w$ onto the convex set $K_n^0(V).$ Recall that $K_n^0(V) \coloneqq \{\theta \in \R^{n \times n}: \TV(\theta) \leq V,\, \overline{\theta} = 0\}\,.$

\subsection{Sketch of Proof} To show that $\hat{\theta}_{\mathrm{notuning}}$ is a good 
estimator of $\theta^*$ it clearly suffices to show that $\hat{w}$ is a good estimator 
of $w^*.$ If we knew $\TV(\theta^*) = \TV(w^*) = V^*$, a similar argument as in the 
proof of Theorem~\ref{Thm:1} would tell us that $\hat{w}_{V^*}$ attains the 
$\tilde{O}(\frac{V^*}{\sqrt{N}})$ rate that we desire. Of course, the aim here is to 
get the same rate without knowing $V^*$ and $\sigma.$ One part of our proof deals with 
showing that using $\hat{\sigma}$ in the definition of our estimator is not much worse 
than if we knew $\sigma$ and used it in defining our estimator. This is shown by 
showing that $\hat{\sigma} \approx \sigma$ using a concentration of measure argument 
where ``$\approx$'' is a somewhat informal notation conveying the meaning of 
approximately equal to.

To analyze the risk of $\hat{w},$ a natural first step is to decompose the risk as follows:
\begin{equation*}
\|\hat{w} - w^*\|^2 \leq 2 \|\hat{w}_{V^*} - w^*\|^2 + 2 \|\hat{w} - \hat{w}_{V^*}\|^2\,.
\end{equation*}
Here we used the elementary inequality $\|a + b\|^2 \leq 2 \|a\|^2 + 2 \|b\|^2$. The above decomposition has a natural interpretation as twice the sum of the ideal risk (achievable when $V^*$ is known) and an excess risk due to not knowing $V^*$ and $\sigma.$ The main task therefore is to upper bound the excess risk term $\|\hat{w} - \hat{w}_{V^*}\|^2.$ 

We now need to look at two different cases.
The first case is when $\hat{w} \neq \textbf{0}.$ In this case we first show that the minimum of the optimization problem defined in~\eqref{eq:def} is attained on the boundary. This would mean we have $\|\hat{w} - w\|^2 = (n^2 - 1) \hat{\sigma}^2 \approx (n^{2} - 1) \sigma^2.$ 
Letting $\hat{V} = \TV(\hat{w})$, a simple geometric argument also shows that $\hat{w}_{\hat{V}} = \hat{w}.$ Thus, both $\hat{w}_{V^*}$ and $\hat{w}$ are Euclidean projections onto $K_n^0(V)$ for two possibly different choices of $V.$ Thus, we can now  use standard characterizations of Euclidean projections onto convex sets (content of Lemma~\ref{lem:projfact}) for both $\hat{w}_{V^*}$ and $\hat{w}$ to obtain a bound on the excess risk as follows:
\begin{equation*}
\|\hat{w} - \hat{w}_{V^*}\|^2 \leq \big|\|\hat{w} - w\|^2 - \|w - \hat{w}_{V^*}\|^2\big|\,.
\end{equation*}  
Since $\|\hat{w} - w\|^2 \approx (n^{2} - 1) \sigma^2$ we can then conclude
\begin{equation*}
\big|\|\hat{w} - w\|^2 - \|w - \hat{w}_{V^*}\|^2\big| \approx \big|(n^{2} - 1) \sigma^2 - \|w - \hat{w}_{V^*}\|^2\big|\,.
\end{equation*} 
Further, since $\hat{w}_{V^*}$ is known to be a good estimator of $w^*$ we can write
\begin{equation*}
\|w - \hat{w}_{V^*}\|^2 \approx \|w - w^{*}\|^2 = \|Z - \overline Z \textbf{1}\|^2 \sigma^2 \approx (n^{2} - 1) \sigma^2\,.
\end{equation*}
where the last approximation is again by a simple concentration of measure argument. The last three displays then suggest that $\hat{w}$ is close to $\hat{w}_{V^*}.$ Quantifying the last three displays gives us the desired upper bound on the excess risk.

The second case is when $\hat{w} = \textbf{0}.$ By definition we have 
$\|\hat{w}\|^2 \leq (n^{2} - 1) \hat{\sigma}^2 \approx (n^{2} - 1) \sigma^2.$ Since $\textbf{0} \in K_n^0(V^*)$ and $\hat{w}_{V^*}$ is the projection of $w$ onto $K_n^0(V^*)$, a standard fact about Euclidean projections onto convex sets gives $\langle w - \hat{w}_{V^*}, \hat{w}_{V^*} \rangle \geq 0.$ This implies 
\begin{equation*}
\|\hat{w} - \hat{w}_{V^*}\|^2 = \|\hat{w}_{V^*}\|^2 \leq \|w\|^2 - \|w - \hat{w}_{V^*}\|^2 \lesssim (n^{2} - 1) \sigma^2 - \|w - \hat{w}_{V^*}\|^2\,.
\end{equation*} 
The rest of the proof then follows similarly as in the previous case.

\subsection{Full Proof}
While proving Theorem~\ref{Thm:notuning} we will prove a few intermediate results. Our first lemma is a basic fact about Euclidean projections onto $K_n^0(V)$ for two different choices of $V.$ This also appears as Lemma $5.1$ in~\cite{chatterjee2015high}. For the sake of completeness, we give a proof below.
\begin{lemma}\label{lem:projfact}
	Let $y \in \R^{n \times n}$ and recall $K_n^0(V) \coloneqq \{\theta \in \R^{n \times n}: \TV(\theta) \leq V, \overline{\theta} = 0\}.$ Let $V_1 > V_2 \geq 0$ and let $\pi_1(y),\pi_2(y)$ be the Euclidean projection of $y$ onto the convex sets $K_n^0(V_1),K_n^0(V_2)$ respectively. Then we have the following inequality:
	\begin{equation*}
	\|\pi_1(y) - \pi_2(y)\|^2 \leq \|y - \pi_2(y)\|^2 - \|y - \pi_1(y)\|^2.
	\end{equation*}
\end{lemma}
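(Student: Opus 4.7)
The plan is to exploit the nesting $K_n^0(V_2) \subseteq K_n^0(V_1)$, which follows immediately from $V_2 < V_1$, together with the standard variational characterization of Euclidean projections onto a closed convex set. Since $K_n^0(V_1)$ is closed and convex, the projection $\pi_1(y)$ is characterized by the obtuse-angle condition
\[
\langle y - \pi_1(y),\, v - \pi_1(y) \rangle \leq 0 \quad \text{for every } v \in K_n^0(V_1).
\]
Because $\pi_2(y) \in K_n^0(V_2) \subseteq K_n^0(V_1)$, we are free to substitute $v = \pi_2(y)$ in this inequality.

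Next I would expand the error at $\pi_2(y)$ around $\pi_1(y)$ using the Pythagorean-style identity
\[
\|y - \pi_2(y)\|^2 = \|y - \pi_1(y)\|^2 + 2\langle y - \pi_1(y),\, \pi_1(y) - \pi_2(y)\rangle + \|\pi_1(y) - \pi_2(y)\|^2.
\]
The obtuse-angle inequality obtained in the previous step, rewritten as $\langle y - \pi_1(y),\, \pi_1(y) - \pi_2(y)\rangle \geq 0$, guarantees that the cross term is non-negative. Dropping that non-negative cross term from the identity and rearranging immediately yields
\[
\|\pi_1(y) - \pi_2(y)\|^2 \leq \|y - \pi_2(y)\|^2 - \|y - \pi_1(y)\|^2,
\]
which is exactly the claim.

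There is essentially no obstacle here; the only thing to verify carefully is the nesting $K_n^0(V_2) \subseteq K_n^0(V_1)$, which is immediate from the definition $K_n^0(V) = \{\theta : \TV(\theta) \leq V\}$, and the applicability of the projection inequality, which is standard for projection onto any closed convex subset of a Euclidean space. Thus the proof reduces to one line of projection geometry plus one line of algebra.
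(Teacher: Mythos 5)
Your proof is correct and is essentially identical to the paper's own argument: both use the obtuse-angle (first-order optimality) condition for the projection onto $K_n^0(V_1)$ applied at the point $\pi_2(y)\in K_n^0(V_2)\subseteq K_n^0(V_1)$, then expand $\|y-\pi_2(y)\|^2$ around $\pi_1(y)$ and drop the non-negative cross term. Nothing further is needed.
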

\begin{proof}
	Since $\pi_2(y) \in K_n^0(V_1)$ by definition, the standard KKT condition for projections onto convex sets implies
	$\langle y - \pi_1(y), \pi_2(y) - \pi_1(y)\rangle \leq 0.$ Therefore we can write
	\begin{align*}
		\|y - \pi_2(y)\|^2 &= \|y - \pi_1(y)\|^2 + \|\pi_1(y) - \pi_2(y)\|^2 + 2 \langle y - \pi_1(y),\pi_1(y) - \pi_2(y) \rangle \\
		&\geq \|y - \pi_1(y)\|^2 + \|\pi_1(y) - \pi_2(y)\|^2.
	\end{align*}
	This finishes the proof of the lemma. 
\end{proof}

Our next lemma is the following pointwise inequality.
\begin{lemma}\label{lemma:ptwisefact}
	Let $w = y - \overline{y} \textbf{1}$ be the centered version of $y.$ For any $V \geq 0,$ let $\hat{w}_{V}$ denote the projection of $w$ onto the convex set $K_n^0(V).$ Let 
	\begin{equation}\label{eq:opt}
	\hat{w} =  \argmin_{v:\, \overline{v} = 0,\, \|w - v\|^2 \leq (n^2 - 1) \hat{\sigma}^2} \TV(v).
	\end{equation}
	Then we have the following pointwise inequality;
	\begin{equation*}
	\|\hat{w} - \hat{w}_{V^*}\|^2 \leq |(n^2 - 1) \hat{\sigma}^2 - \|w - \hat{w}_{V^*}\|^2|.
	\end{equation*}
\end{lemma}

\begin{proof}
	Let us first consider the case when $\hat{w} \neq \textbf{0}.$ 
	Define $\hat{V} \coloneqq \TV(\hat{w}).$ We claim that $\hat{w}_{\hat{V}} = \hat{w}$ and further
	\begin{equation}\label{eq:onbdry}
	\|w - \hat{w}\|^2 = (n^2 - 1) \hat{\sigma}^2.
	\end{equation}

	To prove the above claim, suppose $\hat{w}_{\hat{V}} \neq \hat{w}.$ Then we have 
	$\|w - \hat{w}_{\hat{V}}\|^2 < \|w - \hat{w}\|^2 \leq (n^2 - 1) \hat{\sigma}^2$ because of uniqueness of Euclidean projections onto convex sets. Therefore, we have $\|w - \hat{w}_{\hat{V}}\|^2 <  (n^2 - 1) \hat{\sigma}^2$ and $\|w - \textbf{0}\|^2 > (n^2 - 1) \hat{\sigma}^2$ by assumption. Let us now draw a line segment connecting $\hat{w}_{\hat{V}}$ to the origin and select the point which cuts the boundary of the $\sqrt{(n^2 - 1)} 
	\,\hat{\sigma}$ ball around $w$ and call it $w^{{\rm bdry}}.$ 
	Then by construction we have
	\begin{equation}\label{eq:tvless}
	\TV(w^{{\rm bdry}}) < \TV(\hat{w}_{\hat{V}}) \leq \TV(\hat{w}).
	\end{equation}
	Since $w$ has zero mean, it is not hard to see that $\hat{w}_{\hat{V}}$ has mean zero as 
	well because $\hat{w}_{\hat{V}}$ is the Euclidean projection of $w$ onto 
	$K_n^0(\hat{V}).$ Therefore any point falling on the line segment between 
	$\hat{w}_{\hat{V}}$ and the origin also must have mean zero, including $w^{{\rm bdry}}.$ Thus $w^{{\rm bdry}}$ is feasible for the optimization problem defined in~\eqref{eq:opt}. Together with~\eqref{eq:tvless} this contradicts the definition of $\hat{w}.$ Therefore $\hat{w}_{\hat{V}}$ must be equal to $\hat{w}$ and~\eqref{eq:onbdry} must hold.

	Letting $V^* = \TV(\theta^*)$, we can now write
	\begin{align*}
	\|\hat{w} - \hat{w}_{V^*}\|^2 = \|\hat{w}_{\hat{V}} - \hat{w}_{V^*}\|^2 \leq |\|w - \hat{w}_{\hat{V}}\|^2 - \|w - \hat{w}_{V^*}\|^2| = |(n^2 - 1) \hat{\sigma}^2 - \|w - \hat{w}_{V^*}\|^2|
	\end{align*}
	where we have applied Lemma~\ref{lem:projfact} in the first inequality and used~\eqref{eq:onbdry} in the last equality.

	Now let us consider the case when $\hat{w} = \textbf{0}.$ In this case we can write
	\begin{align*}
	&\|\hat{w} - \hat{w}_{V^*}\|^2 = \|\hat{w}_{0} - \hat{w}_{V^*}\|^2 \leq |\|w\|^2 - \|w - \hat{w}_{V^*}\|^2| = \|w\|^2 - \|w - \hat{w}_{V^*}\|^2 \leq \\& (n^2 - 1) \hat{\sigma}^2 - \|w - \hat{w}_{V^*}\|^2.
	\end{align*}
	The first inequality uses Lemma~\ref{lem:projfact} and the second equality follows from the definition of $\hat{w}_{V^*}$ upon observing that $\bm 0 \in K_n^0(V^*)$. Finally the third inequality uses the fact that $\|w\|^2 \leq (n^2 - 1) \hat{\sigma}^2$ since $\hat{w} = \textbf{0}$. This finishes the proof of the lemma. 
\end{proof}

Our next result is a proposition which gives a pointwise upper bound to the squared loss. 
\begin{proposition}\label{prop:ptwise}
	Let $V^* = \TV(\theta^*).$ Let $w = y - \overline{y} \textbf{1}$ and $w^* = \theta^* - \overline{\theta^*} \textbf{1}$ be the centered versions of $y$ and $\theta^*$ respectively. Also let $\hat{w}_{V^*}$ denote the Euclidean projection of $w$ onto $K_n^0(V^*).$ Then the following pointwise risk inequality holds:
	\begin{align*}
	\|\hat{\theta} - \theta^*\|^2 \leq \:8\:\sigma\:\sup_{v \in K_n^0(2V^*)} \langle Z, v\rangle\, +\, &|\overline{y} - \overline{\theta^*}|^2 n^2 +   2|\|w - w^*\|^2 - (n^2 - 1) \sigma^2|\\ + &2(n^2 - 1)\:|\hat{\sigma}^2 - \sigma^2|.
	\end{align*}
\end{proposition}


\begin{proof}
	By definition of $\hat{\theta}$ and Pythagorean theorem we have
	\begin{equation}\label{eq:tr1}
	\|\hat{\theta} - \theta^*\|^2 = \|\overline{y}\:\textbf{1} - \overline{\theta^*}\:\textbf{1}\|^2 + \|\hat{w} - w^*\|^2 \leq \|\overline{y}\textbf{1} - \overline{\theta^*}\textbf{1}\|^2 + 2 \|\hat{w} - \hat{w}_{V^*}\|^2 + 2\|\hat{w}_{V^*} - w^*\|^2.
	\end{equation}


	We can now use Lemma~\ref{lemma:ptwisefact} and the triangle inequality to write
	\begin{align}\label{eq:tr2}
	&\|\hat{w} - \hat{w}_{V^*}\|^2 \leq |(n^2 - 1)\hat{\sigma}^2 - \|w - \hat{w}_{V^*}\|^2| \leq 
	(n^2 - 1)\:|\hat{\sigma}^2 - \sigma^2|\, + \nonumber\\&|\|w - w^*\|^2 - (n^2 - 1) \sigma^2| + |\|w - \hat{w}_{V^*}\|^2 - \|w - w^*\|^2|
	\end{align}
	Let us now bound the third term above on the right side. 
	\begin{align*}
	|\|w - \hat{w}_{V^*}\|^2 - \|w - w^*\|^2| &= |\|w^* - \hat{w}_{V^*}\|^2 + 2 \langle w - w^*, w^* -  \hat{w}_{V^*}\rangle|  \\&\leq \|w^* - \hat{w}_{V^*}\|^2 + 2 \sup_{v \in K_n^0(2V^*)} \langle w - w^*, v\rangle.
	\end{align*}
	We now observe that for any mean zero matrix $v$, we can write
	$$\langle w - w^*,v \rangle = \langle y - \theta^* - (\overline{y} - \overline{\theta^*}) \textbf{1}, v \rangle = \langle y - \theta^*,v \rangle = \sigma\:\langle Z,v \rangle.$$ 
	The last two displays then imply that 
	\begin{equation}\label{eq:tr3}
	|\|w - \hat{w}_{V^*}\|^2 - \|w - w^*\|^2| \leq \|w^* - \hat{w}_{V^*}\|^2 + 2\:\sigma\: \sup_{v \in K_n^0(2V^*)} \langle Z, v\rangle.
	\end{equation}
	Further, from the basic inequality $\|w - \hat{w}_{V^*}\|^2 \leq \|w - w^*\|^2$ we can conclude
	\begin{equation*}
	\|w^* - \hat{w}_{V^*}\|^2 \leq 2 \langle \hat{w}_{V^*} - w^*, w - w^* \rangle = 2 \langle \hat{w}_{V^*} - w^*, y - \theta^* \rangle \leq 2\:\sigma\: \sup_{v \in K_n^0(2V^*)} \langle Z,v \rangle.
	\end{equation*}
	The last display along with ~\eqref{eq:tr1},~\eqref{eq:tr2} and ~\eqref{eq:tr3} finish the proof of the proposition. 
\end{proof}

We are now in a position to finally prove Theorem~\ref{Thm:notuning}.

\begin{proof}[Proof of Theorem~\ref{Thm:notuning}]
	It suffices to take expectation over the four terms which consists in the upper bound given in Proposition~\ref{prop:ptwise}. We now sequentially bound the expectation of these terms. We will use $C$ to denote a positive, universal constant whose exact value may change from one line to the next.
	
	The first term is just $8 \sigma$ times the Gaussian width of $K_n^0(2V^*)$ and we can use~\ref{eq:gwkv} to upper bound it. As for the second term, it is clear that 
	\begin{equation*}
	n^2 \E (\overline{y} - \overline{\theta^*})^2 = n^2 \text{Var} (\overline{y}) = \sigma^2.
	\end{equation*}

	Also we observe that $\frac{\|w - w^*\|^2}{\sigma^2}= \sum_{i = 1}^{n} \sum_{j = 1}^{n} (Z_{ij} - \overline{Z})^2 \approx \chi^2_{n^2 - 1}.$ This is a standard fact about standard normal random variables. Therefore we can write 
	\begin{align*}
	\E |\|w - w^*\|^2 - (n^2 - 1) \sigma^2| &\leq \big(\E |\|w - w^*\|^2 - (n^2 - 1) \sigma^2|^{2}\big)^{1/2}\\ &\leq \sigma^2 \big(\text{Var}(\chi^2_{n^2 - 1}))^{1/2} = \sigma^2 \sqrt{2\:(n^2 - 1)} \leq \sqrt{2}\:\sigma^2 n
	\end{align*}
	where the first inequality follows from the Cauchy Schwartz inequality and the last equality follows because $\text{Var} (\chi^2_{k})) = 2\:k$ for any positive integer $k.$

	Next we bound $\E |\hat{\sigma}^2 - \sigma^2|.$ We can write
	\begin{align}\label{eq:square}
	|\hat{\sigma}^2 - \sigma^2| \leq &|\hat{\sigma} - \sigma|^2 + 2 \sigma |\hat{\sigma} - \sigma|\,.
	\end{align}
	Recalling the definition of $\hat{\sigma}$ we have
	\begin{align*}
	|\hat{\sigma} - \sigma| &= |\frac{\TV(\theta^* + \sigma Z) - \sigma \E \TV(Z)}{\E \TV(Z)}|\nonumber\\ 
	&\leq \frac{\TV(\theta^*)}{\E \TV(Z)} + \sigma \frac{|\TV(Z) - \E \TV(Z)|}{\E \TV(Z)}\,.
	\end{align*}
	Thus we can write
	\begin{align}\label{eq:sigmaest}
	&|\hat{\sigma} - \sigma|^2 \leq 2 (\frac{V^*}{\E \TV(Z)})^2 + 2 \sigma^2 (\frac{|\TV(Z) - \E \TV(Z)|}{\E \TV(Z)})^2\,.
	\end{align}
	Now, since $\TV(Z)$ is a sum of $N(0,2)$ random variables it is easy to check that $\E \TV(Z) = \frac{4\:n\:(n - 1)}{\sqrt{\pi}}.$ Also by Lemma~\ref{lem:TVdiff} we can upper bound the variance of $\TV(Z)$ to get
	\begin{equation*}
	\var (\TV(\mb Z)) \leq C n(n-1)\,.
	\end{equation*}
	Taking expectation on both sides of~\eqref{eq:sigmaest} we obtain
	\begin{equation*}
	\E |\hat{\sigma} - \sigma|^2 \leq 2 \big (\frac{V^* \sqrt{\pi}}{4\:n\:(n - 1)}\big)^2 + 2 \sigma^2 \frac{C \:\pi\:n(n-1)}{(4\:n\:(n - 1))^2} \leq C \big(\frac{(V^*)^2}{n^4} + \frac{\sigma^2}{n^2}\big)\,.
	\end{equation*}
	Using~\eqref{eq:square}, the last display and the Cauchy-Schwarz inequality to bound $\E 
	|\hat \sigma - \sigma|$, we can deduce 
	\begin{equation*}
	\E |\hat{\sigma}^2 - \sigma^2| \leq \E |\hat{\sigma} - \sigma|^2 + 2 \sigma \big(\E |\hat{\sigma} - \sigma|^2\big)^{1/2} \leq C \big(\frac{(V^*)^2}{n^4} + \frac{\sigma^2}{n^2}\big) + C \sigma \big(\frac{V^*}{n^2} + \frac{\sigma}{n}\big)\,.
	\end{equation*}

	Collecting the bounds we have obtained in this proof for the four terms comprising the upper bound given in Proposition~\ref{prop:ptwise}, we can conclude that
	\begin{equation*}
	{\rm MSE}(\hat{\theta}_{\mathrm{notuning}},\theta^*) \leq C \big(\sigma \frac{V^*}{N} \log (\e n) \log (2 + 2V^* n^2) + \big(\frac{V^*}{N}\big)^2 + \frac{\sigma^2}{\sqrt{N}} + \frac{\sigma^2}{N}\big)\,.
	\end{equation*}
	This finishes the proof of Theorem~\ref{Thm:notuning}.
\end{proof}

It only remains to prove the following lemma. 
\begin{lemma}\label{lem:TVdiff}
	There exists a universal constant $C > 0$ such that
	$$\var (\TV(\mb Z)) \leq C n(n-1) \,.$$ 
\end{lemma}
\begin{proof}
	Expanding $\var(\TV(\mb Z))$ we get
	\begin{align}
	\label{eq:TVdiff1}
	\var(\TV(\mb Z)) = \sum_{e, e'\in E_n}\cov (|\Delta_e \mb Z|, |\Delta_{e'} \mb Z|) = \sum_{e\in E_n}\sum_{e' \in E_n, e' \sim e}\cov(|\Delta_e \mb Z|, |\Delta_{e'} \mb Z|)\,
	\end{align}
	where in the second step we used the observation that $\cov(|\Delta_e \mb Z|, |\Delta_{e'} \mb Z|) = 0$
	for all non-adjacent $e, e'$, i.e., $e, e'$ which do not share any vertex. Here $e' \sim e$ means the edges $e,e'$ are adjacent. 
	Since each edge $e$ is adjacent to finitely many edges (including $e$ itself) 
	we get 
	from \eqref{eq:TVdiff1} that $\var(\TV(\mb Z)) \leq C|E_n|$ 
	for some universal constant $C > 0$. The lemma now follows by noting that $|E_n| = 2n(n-1)$. 
\end{proof}

\section{Appendix}\label{Sec:appendix}
\subsection{Some auxiliary results}\label{subsec:prelim}
\begin{lemma}
	\label{lem:simple}
	Suppose $\{f_i,g_i,h_i\}_{i = 1}^{n}$ are non negative real numbers satisfying the following inequality for each $i \in [n],$
	\begin{equation*}
	f_i \geq g_i - h_i.
	\end{equation*}
	Let $\{w_i\}_{i = 1}^{m}$ be some other non negative numbers.
	In addition, also suppose the following inequality holds for some $\delta > 0,$
	\begin{equation*}
	\sum \limits_{i = 1}^{n} f_i +  \sum \limits_{i = 1}^{m} w_i \leq \sum \limits_{i = 1}^{n} g_i + \delta.
	\end{equation*}
	Then the following is true:
	\begin{equation*}
	\sum_{i = 1}^{n} (f_i - g_i)_{+} + \sum \limits_{i = 1}^{m} w_i \leq \delta + \sum_{i = 1}^{n} h_i\,,
	\end{equation*}
	where $a_{+} = \max\{a,0\}$ for any $a \in \R$.
\end{lemma}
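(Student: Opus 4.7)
The plan is to decompose $(f_i - g_i)_+$ using the elementary identity $(f_i - g_i)_+ = (f_i - g_i) + (g_i - f_i)_+$, which will let me separate the signed sum (controlled by the global hypothesis $\sum f_i + \sum w_i \leq \sum g_i + \delta$) from the positive-part residual (controlled by the pointwise hypothesis $f_i \geq g_i - h_i$). This way the two assumptions feed cleanly into two different terms and combine additively.

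Concretely, first I would rearrange the global inequality to write $\sum_{i=1}^{n}(f_i - g_i) \leq \delta - \sum_{i=1}^{m} w_i$. Next, from $f_i \geq g_i - h_i$ together with $h_i \geq 0$, I would deduce $(g_i - f_i)_+ \leq h_i$ pointwise, and sum to obtain $\sum_{i=1}^{n}(g_i - f_i)_+ \leq \sum_{i=1}^{n} h_i$.

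Finally, summing the decomposition $(f_i - g_i)_+ = (f_i - g_i) + (g_i - f_i)_+$ over $i$ and plugging in the two bounds above yields $\sum (f_i - g_i)_+ \leq \delta - \sum w_i + \sum h_i$, which is exactly the claim after moving $\sum w_i$ to the left-hand side. There is no real obstacle: the lemma is purely algebraic, the nonnegativity of the $h_i$ is what turns the pointwise inequality into a bound on the positive part, and the only conceptual step is spotting the positive/negative part decomposition that keeps the two hypotheses disentangled.
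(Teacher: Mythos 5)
Your proof is correct and is essentially identical to the paper's: the paper writes $\sum_i (f_i-g_i) = \sum_i (f_i-g_i)_+ - \sum_i (f_i-g_i)_-$ and bounds $(f_i-g_i)_- = (g_i-f_i)_+ \leq h_i$, which is exactly your decomposition rearranged. No differences worth noting.
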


\begin{proof}
	The first equation in the above proposition basically says $(f_i - g_i)_{-} \leq  h_i$ for $i \in [n]$ where $a_- = 
	(-a)_+$ for any $a \in \R$. Therefore we can write
	\begin{align*}
	\delta &\geq \sum_{i = 1}^{n} (f_i - g_i) + \sum \limits_{i = 1}^{m} w_i = \sum_{i = 1}^{n} (f_i - g_i)_{+} - \sum_{i = 1}^{n} (f_i - g_i)_{-} + \sum \limits_{i = 1}^{m} w_i \nonumber \\
	&\geq \sum_{i = 1}^{n} (f_i - g_i)_{+} - \sum_{i = 1}^{n} h_i + \sum \limits_{i = 1}^{m} w_i
	\end{align*}
	which finishes the proof of the lemma.
\end{proof}

We state the following lemma which appears as Lemma $D.1$ in~\cite{guntuboyina2017adaptive}.
\begin{lemma}[Guntuboyina et al.]
	\label{lem:Guntu}
	Suppose $p,n \geq 1$ and let $\Theta_1,\dots,\Theta_p$ be subsets of $\R^n$ each containing the origin and contained in the closed Euclidean ball of radius $D > 0$ centered at the origin. Then for $Z \sim N(0,\sigma^2 I)$ we have
	\begin{equation*}
	\E \big(\max_{i \in [p]} \sup_{\theta \in \Theta_i} \langle Z,\theta \rangle \big) \leq \max_{i \in [p]} \E \big(\sup_{\theta \in \Theta_i} \langle Z,\theta \rangle \big) + D \sigma \big(\sqrt{2 \log p} + \sqrt{\frac{\pi}{2}}\big).
	\end{equation*}
\end{lemma}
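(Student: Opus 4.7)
The plan is to use the standard Gaussian concentration approach. For each $i \in [p]$, set $W_i := \sup_{\theta \in \Theta_i}\langle Z, \theta\rangle$; note that $W_i \geq 0$ because $\mb 0 \in \Theta_i$. The first observation is that, viewed as a function of $Z$, the map $z \mapsto W_i(z)$ is $D$-Lipschitz with respect to the Euclidean norm: for any $z, z' \in \R^n$,
$$|W_i(z) - W_i(z')| \leq \sup_{\theta \in \Theta_i} |\langle z - z', \theta \rangle| \leq D\|z - z'\|\,,$$
using the fact that a supremum of $D$-Lipschitz linear functionals is itself $D$-Lipschitz, together with $\|\theta\| \leq D$.

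Next, I would invoke the Borell-Tsirelson-Ibragimov-Sudakov concentration inequality for Lipschitz functions of a Gaussian vector to deduce that the centered variable $X_i := W_i - \E W_i$ satisfies $P(X_i > t) \leq \exp(-t^2/(2 D^2 \sigma^2))$ for all $t \geq 0$. Combining the pointwise inequality
$$\max_i W_i = \max_i \bigl[(W_i - \E W_i) + \E W_i\bigr] \leq \max_i X_i + \max_j \E W_j$$
with taking expectations, the lemma reduces to showing $\E \max_i X_i \leq D\sigma\bigl(\sqrt{2\log p} + \sqrt{\pi/2}\bigr)$.

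To bound this last quantity, I would use $\max_i X_i \leq (\max_i X_i)_+$ together with the layer-cake representation:
$$\E (\max_i X_i)_+ = \int_0^\infty P(\max_i X_i > t)\,dt \leq \int_0^\infty \min\bigl(1,\, p\,e^{-t^2/(2 D^2 \sigma^2)}\bigr)\, dt\,,$$
where the second step uses the union bound together with the sub-Gaussian tail from the previous paragraph. Substituting $u = t/(D\sigma)$ and splitting at the cross-over point $u^\ast = \sqrt{2\log p}$ (with the convention $u^\ast = 0$ when $p = 1$), the portion on $[0, u^\ast]$ contributes exactly $D\sigma\sqrt{2\log p}$, while the tail portion $D\sigma \int_{u^\ast}^\infty p\,e^{-u^2/2}\,du$ equals $D\sigma\sqrt{\pi/2}$ when $p = 1$ and is at most $D\sigma/\sqrt{2\log p}$ for $p \geq 2$ via the Mills-ratio bound. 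Since $1/\sqrt{2\log 2} < \sqrt{\pi/2}$, in both regimes the tail contribution is at most $D\sigma\sqrt{\pi/2}$, which yields the stated bound. The only mildly delicate point is matching the additive constant $\sqrt{\pi/2}$ uniformly across the $p = 1$ and $p \geq 2$ regimes, which is handled cleanly by the split above; I do not foresee any serious obstacle.
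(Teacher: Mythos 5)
Your proof is correct. The paper itself does not prove this lemma --- it is quoted verbatim from Lemma D.1 of \cite{guntuboyina2017adaptive} --- and your argument (Borell--TIS concentration for the $D\sigma$-Lipschitz suprema, the pointwise decomposition $\max_i W_i \leq \max_i(W_i - \E W_i) + \max_j \E W_j$, and the tail integration split at $\sqrt{2\log p}$ with the Mills-ratio bound handling $p\geq 2$ and the full Gaussian integral $\sqrt{\pi/2}$ handling $p=1$) is exactly the standard route used there; the only cosmetic remark is that the hypothesis $\mb 0 \in \Theta_i$ is never actually needed in your argument.
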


Recall that for a vector $v \in \R^n$ we define 
$$ \TV(v) = \sum_{i = 1}^{n - 1} |v_{i + 1} - v_{i}|.$$

\begin{lemma}\label{prop:1dtvapprox}
	Let $\theta \in \R^n.$ Let us define $\overline{\theta} = (\sum_{i = 1}^{n} \theta_i)/n.$ Then we have the following inequality:
	\begin{equation*}
	\sum_{i = 1}^{n} \big(\theta_i - \overline{\theta}\big)^2 \leq n \TV(\theta)^2\,.
	\end{equation*}
\end{lemma}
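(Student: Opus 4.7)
The plan is to observe that in one dimension, the total variation $\TV(\theta) = \sum_{i=1}^{n-1}|\theta_{i+1}-\theta_i|$ controls the oscillation of $\theta$ pointwise. Specifically, for any indices $i,j \in [n]$, by telescoping and the triangle inequality,
\begin{equation*}
|\theta_i - \theta_j| = \Big|\sum_{k=\min(i,j)}^{\max(i,j)-1}(\theta_{k+1}-\theta_k)\Big| \leq \sum_{k=1}^{n-1}|\theta_{k+1}-\theta_k| = \TV(\theta).
\end{equation*}

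Next, I would write the deviation from the mean as an average of pairwise differences:
\begin{equation*}
\theta_i - \overline{\theta} = \frac{1}{n}\sum_{j=1}^n (\theta_i - \theta_j).
\end{equation*}
Applying the triangle inequality together with the pointwise bound above gives $|\theta_i - \overline{\theta}| \leq \frac{1}{n}\sum_j |\theta_i-\theta_j| \leq \TV(\theta)$ for every $i \in [n]$.

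Squaring and summing over $i$ yields $\sum_{i=1}^n(\theta_i-\overline{\theta})^2 \leq n\,\TV(\theta)^2$, which is the claim. There is no real obstacle here; the argument is essentially a one-line observation that in one dimension, $\TV$ dominates the $\ell_\infty$ oscillation, and the conversion from $\ell_\infty$ oscillation to the $\ell_2$ deviation from the mean costs only a factor of $\sqrt{n}$.
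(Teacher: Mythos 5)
Your proof is correct, but it takes a different route from the paper's. You establish the pointwise bound $|\theta_i - \overline{\theta}| \leq \TV(\theta)$ directly, by writing the deviation from the mean as an average of pairwise differences and bounding each $|\theta_i - \theta_j|$ by $\TV(\theta)$ via telescoping; the $\ell_2$ bound then follows by squaring and summing. The paper instead invokes the canonical (Jordan) decomposition of a bounded-variation vector: it writes $\theta = \alpha - \beta$ with $\alpha,\beta$ monotone nondecreasing and $\TV(\theta) = (\alpha_n - \alpha_1) + (\beta_n - \beta_1)$, expands $\sum_i(\theta_i - \overline{\theta})^2$ into the $\alpha$-term, the $\beta$-term, and a cross term, and bounds each using the fact that a monotone vector deviates from its mean by at most its range. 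Both arguments yield exactly the same constant, so nothing is lost either way; yours is shorter and more elementary, while the paper's echoes the difference-of-monotones technique that underlies the 1D total variation literature it cites (and that the authors contrast with their genuinely two-dimensional entropy computations elsewhere). There is no gap in your argument.
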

\begin{proof}
	Define $\alpha_1 = \theta_1, \beta_1 = 0$ and for every $i > 2$ define
	$$\alpha_i = \alpha_{i - 1} + (\theta_{i} - \theta_{i - 1})_{+}.$$ Now define $\beta = \alpha - \theta.$ Observe that as defined, $\alpha,\beta$ are monotonically non decreasing vectors. Also, we have the equality
	$$\TV(\theta) = \TV(\alpha) + \TV(\beta) = (\alpha_n - \alpha_1) + (\beta_n - \beta_1).$$
	Now we can expand:
	\begin{align*}
	\sum_{i = 1}^{n} \big(\theta_i - \overline{\theta}\big)^2 &= \sum_{i = 1}^{n} \big(\alpha_i - \overline{\alpha}\big)^2 + \sum_{i = 1}^{n} \big(\beta_i - \overline{\beta}\big)^2 - 2 \sum_{i = 1}^{n} (\alpha_i - \overline{\alpha}) (\beta_i - \overline{\beta}) \nonumber \\
	&\leq \sum_{i = 1}^{n} \big(\alpha_i - \overline{\alpha}\big)^2 + \sum_{i = 1}^{n} \big(\beta_i - \overline{\beta}\big)^2 + 2 \sum_{i = 1}^{n} |\alpha_i - \overline{\alpha}| |\beta_i - \overline{\beta}|  \nonumber \\
	&\leq n(\alpha_n - \alpha_1)^2 + n(\beta_n - \beta_1)^2 + 2n (\alpha_n - \alpha_1)(\beta_n - \beta_1) \nonumber \\
	&= n(\alpha_n - \alpha_1 + \beta_n - \beta_1)^2 = n\TV(\theta)^2\,,
	\end{align*}
	thus giving us the lemma.
\end{proof}

\begin{lemma}\label{approxdecreasestv}
	Let $\alpha \in \R^n$ and let $B_1,B_2,\dots,B_k$ be a 
	partition of $[n]$ into contiguous blocks. Let $\alpha_{B_j}$ denote the restriction of $\alpha$ to the block $B_j.$ Also let $\tilde{\alpha} \in \R^n$ be defined so that 
	$$\tilde{\alpha}_{B_j} = \frac{1}{|B_j|} \sum_{i 
		\in B_j} \alpha_{i}\,.$$ 
	In other words, $\tilde{\alpha}$ is the best Euclidean approximation to $\alpha$ within the subspace of all vectors which are constant on each block $B_j.$ We then have the following inequality:
	\begin{equation*}
	\TV(\tilde{\alpha}) \leq \TV(\alpha).
	\end{equation*}
\end{lemma}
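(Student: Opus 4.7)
\textbf{Proof proposal for Lemma~\ref{approxdecreasestv}.}

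The plan is to reduce the general claim to a single-block collapse and then iterate. Define $\alpha^{(0)} = \alpha$ and for each $j \in [k]$ let $\alpha^{(j)}$ be the vector obtained from $\alpha^{(j-1)}$ by replacing the entries indexed by $B_j$ by their common mean $\bar\alpha_{B_j}$. Note that $\alpha^{(k)} = \tilde\alpha$, because successive block-averagings commute (each one only alters coordinates inside its own block, and later averages of already-constant blocks leave them unchanged). Therefore it suffices to prove the one-block statement: if $\alpha' \in \R^n$ is obtained from a vector $\alpha$ by replacing the entries on a contiguous block $B = \{\ell, \ldots, \ell+m-1\}$ with their mean $\mu = m^{-1}\sum_{j \in B}\alpha_j$, then $\TV(\alpha') \leq \TV(\alpha)$.

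For the single-block statement, observe that $\TV(\alpha')$ and $\TV(\alpha)$ agree on every edge whose two endpoints lie outside $B$, while on the edges internal to $B$ the contribution to $\TV(\alpha')$ is $0$. So the inequality reduces to comparing the contributions of the edges inside $B$ together with the (at most two) boundary edges adjacent to $B$. In the interior case (both neighbors $\alpha_{\ell-1}, \alpha_{\ell+m}$ exist), this reduces to showing
\begin{equation*}
|\alpha_{\ell-1} - \mu| + |\mu - \alpha_{\ell+m}| \;\leq\; \sum_{i=\ell-1}^{\ell+m-1} |\alpha_{i+1} - \alpha_i|\,,
\end{equation*}
with the obvious modification when $B$ touches one end of $[n]$.

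To establish this inequality, expand $\mu$ as an average and use the triangle inequality together with the telescoping identities $\alpha_{\ell-1} - \alpha_j = -\sum_{i=\ell-1}^{j-1}(\alpha_{i+1} - \alpha_i)$ and $\alpha_j - \alpha_{\ell+m} = -\sum_{i=j}^{\ell+m-1}(\alpha_{i+1} - \alpha_i)$ for $j \in B$. This bounds the left-hand side by
\begin{equation*}
\frac{1}{m}\sum_{j \in B}\sum_{i=\ell-1}^{j-1}|\alpha_{i+1}-\alpha_i| + \frac{1}{m}\sum_{j \in B}\sum_{i=j}^{\ell+m-1}|\alpha_{i+1}-\alpha_i|\,.
\end{equation*}
The main (and only) combinatorial step is to verify that the total coefficient of each edge $|\alpha_{i+1} - \alpha_i|$ for $\ell-1 \leq i \leq \ell+m-1$ on the right-hand side equals exactly $1$: for such an $i$ the two sums contribute $(\ell+m-1-i)/m$ and $(i - \ell + 1)/m$ respectively, which add to $1$ for the interior indices, and each of the two boundary edges $i = \ell-1, i = \ell+m-1$ receives coefficient $m/m = 1$ from a single side. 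This yields exactly the desired bound; the boundary cases (block at one end of the index set) are handled by the same argument with only one of the two sums present, giving an even smaller coefficient per edge.

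The only mildly technical step is the coefficient count above, but it is elementary. Iterating the single-block bound across $j = 1, \ldots, k$ then gives $\TV(\tilde\alpha) = \TV(\alpha^{(k)}) \leq \cdots \leq \TV(\alpha^{(0)}) = \TV(\alpha)$, completing the proof.
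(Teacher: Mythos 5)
Your proof is correct, but it is organized differently from the paper's. The paper proves the lemma in one shot: for any choice of representatives $i_1 \in B_1, \dots, i_k \in B_k$ one has $\TV(\alpha) \geq \sum_{j=1}^{k-1}|\alpha_{i_{j+1}} - \alpha_{i_j}|$ by telescoping, and then averaging independently over the $i_j$'s and applying Jensen's inequality to each term turns the right-hand side into $\sum_{j=1}^{k-1}|\tilde\alpha_{B_{j+1}} - \tilde\alpha_{B_j}| = \TV(\tilde\alpha)$. You instead collapse one block at a time and verify, via a telescoping expansion of $|\alpha_{\ell-1}-\mu| + |\mu - \alpha_{\ell+m}|$ and an explicit count showing each edge $|\alpha_{i+1}-\alpha_i|$ with $\ell-1 \leq i \leq \ell+m-1$ receives total coefficient exactly $1$, that a single collapse cannot increase $\TV$; I checked the coefficient count and the boundary cases and they are right, and your reduction $\alpha^{(k)} = \tilde\alpha$ is valid since the blocks are disjoint so each step averages the original entries of its block (commutativity is not really the point there). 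The underlying ingredients are the same — Jensen/triangle inequality plus telescoping — but the paper's one-shot averaging is shorter, while your sequential version proves the slightly stronger intermediate fact that $\TV$ is monotone under each individual block collapse, at the cost of the combinatorial bookkeeping.
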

\begin{proof} 
	For any set of indices $i_1 \in B_1,\dots, i_k \in B_k,$ we have the following inequality:
	\begin{equation*}
	\TV(\alpha) \geq \sum_{j = 1}^{k - 1} |\alpha_{i_{j + 1}} - \alpha_{i_j}|.
	\end{equation*}
	Now averaging over the indices $i_j \in B_j$ and using Jensen's inequality gives us
	\begin{equation*}
	\sum_{j = 1}^{k - 1} |\alpha_{i_{j + 1}} - \alpha_{i_j}| \geq \sum_{j = 1}^{k - 1} |\tilde{\alpha}_{B_{j + 1}} - \tilde{\alpha}_{B_j}|.
	\end{equation*}
	The last two displays finish the proof of the proposition.
\end{proof}

\subsection{Proof of Lemma~\ref{lem:division2}}\label{subsec:division2}
\begin{proof}
	Let $P_0 = [n]$ be the initial partition. At every step we take the blocks $b_i \in P_i$ for which $T(b_i) > \epsilon$ and divide $b_i$ into two equal parts. Let $n_i$ be the number of blocks of the partition $P_i$ and $s_i$ equal the number of blocks $B_i$ in $P_i$ that are divided to obtain $P_{i + 1}.$ Define $s_0 = 0.$ Therefore we have $n_{i + 1} = n_i + s_i.$ Note that, due to superadditivity of $T,$ we must have $s_i \leq \lceil \frac{t}{\epsilon} \rceil.$ This implies in particular that $n_i \leq 1 + i \lceil \frac{t}{\epsilon} \rceil.$ Now the division scheme can go on for atmost $N = \lceil \log_2 n \rceil$ rounds. Therefore we have
	\begin{equation*}
	\max_{P \in \mathcal P(t, n, \ep, T)} |P_{U; T, \ep}| \leq 1 + \lceil \log_2 n \rceil \lceil \frac{t}{\epsilon} \rceil \leq 1 + (1 + \log_2 n) (1 + \frac{t}{\epsilon})\,.\qedhere
	\end{equation*}
	%
\end{proof}

\bibliographystyle{chicago}
\def\noopsort#1{}

\end{document}